\def\url#1{{\tt #1}}
\def\epsilon{\varepsilon}
\def\ge{\geq}
\def\le{\leq}
\newtheorem{theorem}{Theorem}[section]
\newtheorem{fact}[theorem]{Fact}
\newtheorem{proposition}[theorem]{Proposition}
\newtheorem{lemma}[theorem]{Lemma}
\newtheorem{question}[theorem]{Question}
\theoremstyle{definition}
\newtheorem{definition}[theorem]{Definition}
\newtheorem{example}[theorem]{Example}
\newcommand{\two}{\{0,1\}}
\newcommand{\NN}{{\mathbb{N}}}
\newcommand{\RR}{{\mathbb{R}}}
\newcommand{\R}{{\mathbb{R}}}
\newcommand{\QQ}{{\mathbb{Q}}}
\newcommand{\sub}{\subseteq}
\newcommand{\sN}[1]{_{#1\in \NN}}
\newcommand{\uhr}[1]{\! \upharpoonright_{#1}}
\newcommand{\ML}{Martin-L\"of}
\newcommand{\MLac}{ML absolutely continuous}
\newcommand{\SI}[1]{\Sigma^0_{#1}}
\newcommand{\bi}{\begin{itemize}}
\newcommand{\ei}{\end{itemize}}
\newcommand{\Halt}{{\ES'}}
\newcommand{\ES}{\emptyset}
\newcommand{\tp}[1]{2^{#1}}
\newcommand{\ex}{\exists}
\newcommand{\fa}{\forall}
\newcommand{\lep}{\le^+}
\newcommand{\gep}{\ge^+}
\newcommand{\seqcantor}{\two^{ \NN}}
\newcommand{\cantor}{\seqcantor}
\newcommand{\strcantor}{\two^{ < \omega}}
\newcommand{\Opcl}[1]{[#1]^\prec}
\newcommand{\n}{\noindent}
\newcommand{\leb}{\mathbf{\lambda}}
\newcommand{\sss}{\sigma}
\newcommand{\lland}{\, \land \, }
\newcommand \seq[1]{{\left\langle{#1}\right\rangle}}
\newcommand\+[1]{\mathcal{#1}}
\newcommand{\ape}{\hat{\ }}
\newcommand{\LR}{\Leftrightarrow}
\newcommand{\sssl}{\ensuremath{|\sigma|}}
\begin{document}
\title{Randomness and initial segment complexity for measures}
\author{Andr\'e Nies and Frank Stephan}
\thanks{Andr\'e~Nies is partially
supported by the Marsden Fund of the Royal Society of New Zealand,
13-UOA-184 and 19-UOA-346. Address: Department of Computer Science,
University of Auckland, Private Bag 92019, Auckland, New Zealand.
Email: andre@cs.auckland.ac.nz.}

\thanks{Frank~Stephan is partially supported by
supported by the Singapore Ministry of Education Academic
Research Fund Tier 2 grants MOE2016-T2-1-019 / R146-000-234-112 (2017--2020)
and MOE2019-T2-2-121 / R146-000-304-112 (2020--2023).
Address: Department of Mathematics and
Department of Computer Science,
National University of Singapore, 10 Lower Kent Ridge Road, Block S17,
Singapore 117690, Republic of Singapore.
Email: fstephan@comp.nus.edu.sg.}

\subjclass{03D32,68Q30}
\begin{abstract}
We study algorithmic randomness properties for probability measures on
Cantor space. We say that a measure
$\mu$ on the space of infinite bit sequences is \ML\ absolutely
continuous if the non-\ML-random bit sequences form a null set with
respect to~$\mu$.
We think of this as a weak randomness notion for measures. 

We begin with examples, and provide a robustness property related to Solovay
tests. The initial segment complexity of a measure~$\mu$ at   a length $n$ is defined as the  $\mu$-average over the
descriptive complexity of strings of
  length~$n$,  in the sense of either $C$ or~$K$. We relate this  weak randomness notion for a measure to the growth of its  
initial segment complexity. We show that a maximal growth implies the weak
randomness property, but also that both implications of the
Levin-Schnorr theorem fail. We 
discuss $C$-triviality and $K$-triviality for measures and relate
these two notions with each other. Here, triviality means that the
  initial segment complexity grows  as slowly as possible.

We show that full Martin-L\"of randomness of a measure  implies \ML\
absolute continuity; the converse fails because only the latter
property is compatible with having atoms.
In a final section we consider weak randomness relative to a general
ergodic computable measure. We seek appropriate effective
versions of the Shannon-McMillan-Breiman theorem and the Brudno
theorem where the bit sequences are replaced by measures.
We conclude with several open questions.
\end{abstract}

\maketitle
\tableofcontents
\section{Introduction}

\n The theory of algorithmic randomness is usually developed for bit
sequences, or equivalently, reals in the unit interval. A central randomness notion based on algorithmic tests is
the one due to \ML\ \cite{Martin-Lof:66}. 

Let $\cantor$ denote the  topological   space  of infinite  bit sequences. 
A probability measure $\mu$ on  $\cantor$ can be seen as a statistical
superposition of bit sequences. A single bit sequence   $Z$ forms an 
extreme case: the corresponding measure  $\mu$ is the  Dirac measure~$\delta_{Z}$,  i.e., $\mu$ is concentrated on   $\{Z\}$.  At  the opposite 
extreme is the uniform measure~$\leb$ which independently gives each 
bit value the probability $1/2$.  The uniform measure represents  
maximum disorder as no bit sequence is preferred over any other.

Recall that a measure $\mu$ on $\cantor$  is called  {absolutely
continuous} if each $\leb$-null set is a $\mu$-null  set. We introduce  \ML\  absolute continuity, an  algorithmic randomness notion for probability
measures that 
is a \emph{weakening}  of absolute continuity: we require that  the $\leb$-null set
in the hypothesis    be effective in the sense of \ML.  Given that
there is a universal \ML\ test, and hence a largest effective null
set,   all we have to require is that $\mu(\+ C) = 0$ where $\+ C$ is
the class of bit sequences that are not \ML\  random.
(By  a ``measure" we mean a probability measure on Cantor space unless otherwise
stated.)

Our research is motivated in part   by  a recent definition 
of \ML\ randomness for quantum states corresponding to
infinitely many qubits, due to the first author and
Scholz~\cite{Nies.Scholz:18}.  Let $M_n$ be  the algebra of  complex
$2^n \times 2^n$ matrices. Using the terminology there,  
measures correspond to the quantum
states $\rho$ where the matrix $\rho \uhr {M_n}$ is diagonal for each
$n$. For such diagonal states,  subsequent work of Tejas Bhojraj has shown  that 
the randomness notion defined there is equivalent to the one proposed
here for measures. So the measures form a useful intermediate case to test
conjectures in the   subtler  setting of quantum states.  This
applies, for instance, to the Shannon-McMillan-Breiman theorem discussed below, which is   studied in the setting of quantum states by the
first author and Tomamichel (see the post in~\cite{LogicBlog:17}).

\smallskip
\n \emph{Growth of the initial segment complexity.} Given a binary string
$x$, by $C(x)$ we denote its plain descriptive complexity, and by
$K(x)$ its prefix-free descriptive complexity. Some of our  motivation is
derived from the classical theory. Randomness of infinite bit
sequences is linked to the growth of the descriptive
complexity of their initial segments. For instance,    the
Levin-Schnorr theorem    intuitively says that randomness of $Z$ means
incompressibility 
(up to the same constant $b$)  of all the initial segments of $Z$. 
We want to study how much of this is retained in the setting of measures $\mu$. 
%
The  formal growth condition in the Levin-Schnorr theorem 
says that $K(x) \ge |x| -b$ for each initial segment $x$ of $Z$.  The ``$n$-th initial segment'' of a measure $\mu$ is given by
its values $\mu[x]$ for  all strings $x$ of length $n$, where $[x]$
denotes the set of infinite sequences extending~$x$. We  define the   complexities  $C(\mu \uhr n)$ and $K(\mu
\uhr n)$ of such an 
initial segment  as the $\mu$-average of the individual complexities
of the  strings of length~$n$.
With this definition, in Section~\ref{s:inseg} we show that both
implications of the  analog of the Levin-Schnorr theorem fail. 
However by  Proposition~\ref{prop: dim =1}  discussed below,   for 
measures  that are random in our weak sense, $C(\mu \uhr n) /n$, or
equivalently $K(\mu \uhr n) /n$, 
converges to $1$. Thus, such measures  have effective dimension~1;
see Downey and Hirschfeldt~\cite[Section 12.3]{Downey.Hirschfeldt:book}
for background on effective dimension. 

 It turns out that interesting new growth
behaviour is possible for measures. We   verify  in Fact~\ref{fast}
that the uniform measure $\leb$ has maximal  growth behaviour in the sense of~$K$, namely $K(\leb\uhr n)
\gep n+ K(n)$.  This implies that $\leb $ also has maximal growth in the sense of $C$. For bit sequences,  such a  growth rate is ruled out
 by a result of Katseff.  We   use    that,  in a sense to be made precise, 
most strings of a given length are incompressible.    We also provide an example of such a measure other than $\leb$.    On the other hand,  we show that  maximal  growth in the sense of $C$
implies ML absolute continuity.   

\smallskip
\n \emph{$K$-trivial and $C$-trivial measures.}
Opposite to random bit sequences lie  the $K$-trivial sequences $A$, where
the initial segment complexity grows no faster than that of a
computable set, in the sense that $K(A\uhr n) \lep K(n)$. Chaitin~\cite{Chaitin:76} proved that each $C$-trivial sequence is computable, while  Solovay~\cite{Solovay:75} showed that a $K$-trivial can be noncomputable. For background see e.g.\ \cite[Section
5.3]{Nies:book}.  In Section~\ref{s:Ktriv} we extend  these  notions to
statistical superpositions of bit sequences: we introduce  $C$- and $K$-trivial
measures. We first  show that such measures   have a countable support. This means
that they are countable convex combinations of Dirac measures (based on bit sequences that   necessarily satisfy the same notion of triviality). Thereafter, again guided by the case of bit sequences, we provide closure properties of the two notions: downward closure under truth-table reducibility, and closure under products.

 We leave the question  open whether each $C$-trivial measure is $K$-trivial. In Prop.\ \ref{pr:equivalence} we study three  conditions relating $C$-complexity and   $K$-complexity, and show that they are all equivalent.    If these conditions hold then each $C$-trivial measure is $K$-trivial. (Whether they  indeed hold was discussed at length in a working group at the 2020 American Institute of Mathematics workshop on randomness and applications. A~weaker version of such a  condition was considered in Bruno Bauwen's thesis~\cite{Bauwens:thesis}.) 
 
 We will also consider slightly stronger forms of  the notions of $C$- and $K$-triviality,  for which  this implication holds, and is proper.

\smallskip
\n \emph{\ML\ random  measures.}
Measures can be viewed as 
points in a canonical computable probability space, in the sense
of~\cite{Hoyrup.Rojas:09}. This yields a notion of  \ML\
randomness for measures. Bienvenu and Culver (see \cite[Theorem 2.7.1]{Culver:15})  have
shown that no measure $\mu$ that is \ML\ random is absolutely
continuous; in fact $\mu$ is orthogonal to $\lambda$ in the sense that
some $\leb$-null set is co-null with respect to $\mu$.  
 In contrast,  in Section~\ref{s:full ML} we show that this notion implies 
 our weak notion of randomness, \ML\ absolute continuity.  The  stronger
randomness notion forces the
measure to be  atomless, so the converse implication fails.  Further  
 questions can be asked  about the relationships between the different
randomness notions for measures we have discussed. For instance, does
the randomness notion studied by Culver imply  that the  initial
segment complexity for the measure (in the sense of $C$ or of $K$) is maximal infinitely often? 

\smallskip
\n \emph{Shannon-McMillan-Breiman Theorem.} This theorem from the 1950s says
informally that for an ergodic measure $\rho$ on $\cantor$, outside of an
appropriate null set, every bit sequence~$Z$ reflects the entropy of the
measure $\rho$ by the limit of the weighted information content w.r.t.\ $\rho$
of initial segments of $Z$, namely, $-\frac 1 n \log_2 \rho [Z\uhr n]$. See
e.g.\ \cite{Shields:96}, where the result is called the Entropy
Theorem. In the short Section~\ref{s:SMB}, we replace $Z$ by a measure
$\mu$ that is \ML\ a.c.\ with respect to~$\rho$, and take the $\mu$-average
of the information contents of strings of  the same length. We only obtain
a partial analog of the theorem. We use as a hypothesis that the weighted
information content of strings
w.r.t.\ $\rho$  is bounded, and also provide an example showing  that
this hypothesis is necessary.  However,  in a similar vein, in Proposition~\ref{prop: dim =1}  we 
establish an analog for measures of the effective Brudno's theorem
\cite{Hochman:09,Hoyrup:12}, which states that the entropy of $\rho$
is given as the
limit of $C(Z\uhr n)/n$, for any $Z$ that is $\rho$-ML-random. We show that one can take the limit of $C(\mu \uhr n)/n$ for any $\mu$ that is \MLac\  in $\rho$.

We conclude the paper  with  conclusions and  open questions.

For general background on recursion theory and algorithmic randomness we refer
the readers to the textbooks of Calude \cite{Calude:book},
Downey and Hirschfeldt \cite{Downey.Hirschfeldt:book},
Li and Vit\'anyi \cite{Li.Vitanyi:book},
Nies \cite{Nies:book}, Odifreddi \cite{Odifreddi:bookone,Odifreddi:booktwo}
and Soare \cite{Soare:book}.  
Lecture notes on recursion theory are also available online,
e.g.~\cite{Stephan:ln}. 

An 11-page conference version of the paper has appeared in STACS 2020~\cite{Nies.Stephan:20}. The present paper is  a much improved and expanded version, in particular
Sections~\ref{s:insegs growth} and \ref{s:Ktriv}.

\section{Measures and Randomness} \label{sLMR}

\n In this section we will formally define one of our main notions,
\ML\ absolute continuity (Definition \ref{def:main}),
and collect some basic facts concerning it. In
particular, we  verify that  the well-known equivalence of \ML\ test
and Solovay tests extends to measures. 
We begin by briefly discussing algorithmic randomness for bit
sequences~\cite{Downey.Hirschfeldt:book,Nies:book}. We use
standard notation: letters $Z, X, \ldots$ denote elements of the space
of infinite bit sequences $\cantor$, $\sss,\tau $ denote finite bit
strings, and $[\sss] = \{ Z \colon \, Z \succ \sss\}$ is the set of
infinite bit sequences extending $\sss$.  $Z\uhr n$ denotes the string
consisting of the  first $n$ bits of $Z$. 
 For quantities $r,s$
depending on the same  parameters,  one  writes $r \lep s$ for $r \le
s +O(1)$.  A   subset $G$ of  $\cantor$ is called \emph{effectively open} if $G
= \bigcup_i[\sss_i]$ for a computable sequence $\seq{\sss_i}\sN i$ of strings. 
A measure $\rho$ on $\cantor$ is computable if the map $\strcantor \to
\R$ given by $\sss \mapsto \rho[\sss]$ is computable. That is, each 
real $\rho[\sss]$ is computable,  uniformly in $\sss$. 

\begin{definition} \label{def:ML}  Let $\rho$ be a computable measure
on $\cantor$.  A  $\rho$-\ML\ test  ($\rho$-ML-test, for short)
is a sequence $\seq{G_m} \sN m$ of uniformly effectively open sets such that
$\rho G_m \le \tp{-m}$ for each $m$. A bit sequence $Z$ \emph{fails}
the test if $Z \in \bigcap_m G_m$; otherwise it \emph{passes} the
test.  A bit sequence~$Z$ is $\rho$-\ML\ random ($\rho$-ML-random for
short) if $Z$
passes each $\rho$-\ML-test.
\end{definition}

\n  By  
$\leb$ one denotes  the uniform measure on~$\cantor$.  So $\leb [\sss]
= \tp{-\sssl}$ for each string~$\sss$. If no measure $\rho$ is
provided it will be tacitly assumed that $\rho = \leb$, and we will 
use the term ``\ML-random" instead of ``$\leb$-\ML-random" etc.
Let $K(x)$ denote the prefix free version of descriptive (i.e.,
Kolmogorov) complexity  of a bit  string $x$. 

\begin{theorem}[Levin \cite{Levin:73}, Schnorr \cite{Schnorr:73}] 
\label{thm:LS}  \

\n $Z$ is \ML-random $\LR$ $\ex b \, \fa n \, [K(Z\uhr n) \ge
n-b]$. \end{theorem}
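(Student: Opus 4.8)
The plan is to prove both implications by contraposition, in each case using the Kraft--Chaitin correspondence between bounded request sets and prefix-free descriptions, together with the existence of a universal \ML-test.

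For the direction $\LA$ (incompressibility implies randomness) I would show the contrapositive: if $Z$ is not \ML-random, then for every $b$ there is an $n$ with $K(Z\uhr{n}) < n-b$. Fix a universal \ML-test $\seq{U_m}\sN m$ with $\leb U_m \le \tp{-m}$, and write each $U_{2m}$ as a \emph{disjoint} union of cylinders $[\tau]$, so that $\sum_\tau \tp{-\len\tau} = \leb U_{2m} \le \tp{-2m}$. I would enumerate into a Kraft--Chaitin (bounded request) set, for every such generator $\tau$ of $U_{2m}$, a request for a description of $\tau$ of length $\len\tau - m$. The crucial step is the weight estimate: the contribution of level $2m$ is $\tp{m}\sum_\tau \tp{-\len\tau} \le \tp{m}\tp{-2m} = \tp{-m}$, so summing over $m$ gives total weight at most $1$ and the request set is admissible. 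It yields a constant $d$ with $K(\tau) \le \len\tau - m + d$ for each generator $\tau$ of $U_{2m}$. Since $Z$ fails the universal test, $Z \in U_{2m}$ for every $m$; choosing $\tau_m \prec Z$ that generates $U_{2m}$ gives $\len{\tau_m} \ge 2m \to \infty$ and $K(Z\uhr{\len{\tau_m}}) \le \len{\tau_m} - m + d$, which drops below $\len{\tau_m} - b$ as soon as $m > b + d$.

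For the direction $\RA$ (randomness implies incompressibility) I would again argue contrapositively: assuming $\fa b\, \ex n\, [K(Z\uhr{n}) < n-b]$, build an \ML-test that $Z$ fails. Set $G_b = \bigcup \dset{[\sss]}{K(\sss) < \len\sss - b}$. Because $K$ is upper semicomputable, $\seq{G_b}$ is uniformly effectively open, and the measure bound comes straight from Kraft's inequality: each generating $\sss$ contributes $\tp{-\len\sss} < \tp{-b}\,\tp{-K(\sss)}$, so $\leb G_b \le \tp{-b}\sum_\sss \tp{-K(\sss)} \le \tp{-b}$. Thus $\seq{G_b}$ is an \ML-test, and $Z$ fails it by hypothesis, so $Z$ is not \ML-random.

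The main obstacle is the weight bookkeeping in the first direction. The naive choice of requesting length $\len\tau - m$ from the generators of $U_m$ itself produces weight $1$ at each level and hence a divergent total, making the request set illegitimate. Passing instead to the faster-shrinking components $U_{2m}$, so that the per-level weights form a convergent geometric series, is exactly what repairs this. The second direction, by contrast, is a direct application of semicomputability of $K$ and Kraft's inequality.
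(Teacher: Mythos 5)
Your proof is correct and is the standard argument for the Levin--Schnorr theorem; the paper itself offers no proof, citing Levin and Schnorr, so there is nothing to diverge from. The easy direction you prove via the test $G_b=\bigcup\{[\sigma]\colon K(\sigma)<|\sigma|-b\}$ is exactly the test $\+ R_b$ that the paper introduces immediately after the statement, and your Kraft--Chaitin argument on the components $U_{2m}$ of a universal test (with the geometric weight bound $\tp{m}\cdot\tp{-2m}=\tp{-m}$ making the request set admissible) is the textbook proof that $\seq{\+ R_b}$ is universal.
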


\n Using the notation of \cite[Chapter 3]{Nies:book}, let 
 $\+ R_b$ denote  the set of bit sequences $Z$ such that 
$K(Z\uhr n) < n-b$ for some $n$. It is easy to see that $\seq {\+
R_b}\sN b$ forms a \ML\ test. 
 The Levin-Schnorr theorem says that this test is universal: $Z$ is
\ML-random iff it passes the test.

Recall that unless otherwise stated, all  measures are   probability
measures on Cantor space.
We use the letters $\mu, \nu, \rho$   for     measures (and
recall that $\lambda$ denotes the uniform measure).  We  now provide
the formal definition of  our weak randomness notion
for measures.

\begin{definition} \label{def:main} Let  $\rho $ be a measure.  
A measure $\mu$ is called
\emph{\ML\ absolutely continuous in $\rho$} if $\inf_m \mu(G_m)$ $=$ $0$
for each $\rho$-\ML-test $\seq{G_m}\sN m$. We denote this by 
$\mu \ll_{ML} \rho$. 

If $\inf_m \mu(G_m) = 0$ we say that $\mu$ \emph{passes} the test. 
If $\inf_m \mu(G_m) \ge \delta$ where $\delta >0$ we say $\mu$
\emph{fails the test at level} $\delta$.

In the case that $\rho= \leb$, we say that $\mu$ is \MLac, for short.
\end{definition}  

\n \ML\ absolute continuity is a \emph{weakening} of the usual notion that $\mu$ is 
absolutely  continuous in $\rho$, usually written $\mu \ll \rho$. In fact,  $\mu\ll\rho$ holds  iff
$\mu$ is $\rho$-$ML^X$ absolutely continuous for each oracle $X$. 

In the definition it suffices to consider $\rho$-ML tests $\seq{G_m}\sN m$
such that $G_m \supseteq G_{m+1}$ for each $m$, because we can replace
$\seq{G_m}$ by the $\rho$-ML test given by $\widehat G_m = \bigcup_{k >m} G_k$, and of
course $\inf_m \mu(\widehat G_m) = 0$ implies that $\inf_m \mu(G_m) = 0$. So we
can change the definition above, replacing the condition $\inf_m
G_m = 0$ by the only apparently stronger condition that $\lim_m G_m = 0$.

It is well known that there is a universal $\rho$-ML test. The intersection of all the components of such a   test consists of the
non-$\rho$-ML-random sequences. This implies:

\begin{fact} \label{fa:univ} $\mu \ll_{ML} \rho$  iff the
sequences that  are not $\rho$-ML-random form a $\mu$-null~set. \end{fact}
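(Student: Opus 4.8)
The plan is to exploit the existence of a universal $\rho$-ML-test $\seq{U_m}\sN m$, whose intersection $\bigcap_m U_m$ is, as recalled just before the statement, exactly the class of non-$\rho$-ML-random sequences; write $\+ C$ for this class. The two implications then reduce to elementary facts about the measure of the components of this test versus their intersection, together with continuity from above of the probability measure $\mu$.

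For the implication from $\mu \ll_{ML} \rho$ to $\mu(\+ C) = 0$, I would simply apply the definition of $\mu \ll_{ML} \rho$ to the universal test itself, obtaining $\inf_m \mu(U_m) = 0$. Since $\+ C = \bigcap_m U_m \subseteq U_m$ for every $m$, this gives $\mu(\+ C) \le \inf_m \mu(U_m) = 0$. This direction needs nothing beyond the defining property of the universal test and monotonicity of $\mu$.

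For the converse, suppose $\mu(\+ C) = 0$ and let $\seq{G_m}\sN m$ be an arbitrary $\rho$-ML-test. Using the remark preceding the statement, I would pass to the nested test $\widehat G_m = \bigcup_{k > m} G_k$; since $G_{m+1} \subseteq \widehat G_m$ forces $\inf_m \mu(G_m) \le \inf_m \mu(\widehat G_m)$, it suffices to show $\inf_m \mu(\widehat G_m) = 0$. Because the $\widehat G_m$ are decreasing and $\mu$ is a probability measure, continuity from above gives $\inf_m \mu(\widehat G_m) = \mu(\bigcap_m \widehat G_m)$. Now every $Z \in \bigcap_m \widehat G_m$ fails the legitimate $\rho$-ML-test $\seq{\widehat G_m}\sN m$, hence is non-$\rho$-ML-random, so $\bigcap_m \widehat G_m \subseteq \+ C$ and therefore $\mu(\bigcap_m \widehat G_m) \le \mu(\+ C) = 0$.

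The only point requiring care is this converse direction: a single test $\seq{G_m}$ need not have nested components, so one cannot directly equate $\inf_m \mu(G_m)$ with the measure of a failing set. The combination of the nested-test replacement from the remark with continuity from above of the finite measure $\mu$ is exactly what bridges this gap and lets me route the argument through the containment in $\+ C$.
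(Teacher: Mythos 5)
Your argument is correct and is exactly the elaboration the paper intends: the paper derives this fact in one line from the existence of a universal $\rho$-ML-test whose intersection is the class of non-$\rho$-ML-random sequences, together with the preceding remark on replacing a test by its nested version $\widehat G_m = \bigcup_{k>m} G_k$. Both your use of the universal test for the forward direction and your combination of the nested-test replacement with continuity from above for the converse match the paper's (implicit) reasoning.
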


\n
We have already mentioned the two diametrically opposite types of examples of measure that are ML absolutely continuous in a measure $\rho$:

\begin{example}  (a) $\rho \ll_{ML} \rho$ for each measure $\rho$. 

(b) For a Dirac measure $\delta_Z$, we have $\delta_Z\ll_{ML} \rho$ 
iff $Z$ is $\rho$-ML-random.
\end{example}
Next we provide a source of non-examples where $\rho= \leb$.
\begin{example}  A Bernoulli measure on $\cantor$ assigns a fixed 
probability $p$ to a $0$, independently of the position. Such a
measure is not \MLac\ in the  case that $p\neq 1/2$. 
To see this, note that each \ML-random sequence $Z$ satisfies the law
of large numbers 
$$\lim_n \frac 1 n |\{ i< n \colon Z(i) =1\}| = 1/2;$$
see e.g.\ \cite[Proposition 3.2.19]{Nies:book}.
So if $\mu$ is \MLac, then $\mu$-almost
surely, $Z$ satisfies the law of large numbers. This is not the case
when $\mu$ is a  Bernoulli measures for $ p \neq 1/2$, because $\mu$ a.e.\ $Z$ satisfies the law of large numbers for $p$ instead of $1/2$. \end{example} 

\begin{definition} \label{def:local}
For a measure $\nu$ and string $\sss$ with $\nu[\sss] >0$,  let
$\nu_\sss$ denote  the localisation to $[\sss]$:
$$\nu_\sss(A) = \nu (A \cap [\sss])/ \nu[\sss].$$
Clearly if $\nu$ is \MLac\ then so is $\nu_\sss$.\end{definition}

A set $S$ of probability measures is called \emph{convex} if $\mu_i
\in S$ for $i \le k$ implies that the convex combination $\mu = \sum_i
\alpha_i \mu_i $ is in $ S$, where the $\alpha_i $ are reals in $
[0,1]$ and $\sum_i \alpha_i= 1$. The \emph{extreme points} of $S$ are the
ones that can only be written as convex combinations of length~1 of
elements of $S$. 


\begin{proposition} The \MLac\ probability
measures form a convex
set. Its extreme points are the \MLac\ Dirac measures, i.e.\ the
measures~$\delta_Z$ where $Z$ is a ML-random bit sequence.
\end{proposition}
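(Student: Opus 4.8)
The plan is to verify three things: that the set of \MLac\ measures is convex, that each \MLac\ Dirac measure is an extreme point of this set, and that these are the only extreme points. The first two should be routine, while the third will be the main obstacle.

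First I would prove convexity. Suppose $\mu_0, \ldots, \mu_k$ are all \MLac\ and $\mu = \sum_i \alpha_i \mu_i$ with $\alpha_i \in [0,1]$ and $\sum_i \alpha_i = 1$. Given any \ML\ test $\seq{G_m}\sN m$, I have $\mu(G_m) = \sum_i \alpha_i \mu_i(G_m)$, and since each $\inf_m \mu_i(G_m) = 0$, using the nested form of tests permitted in the discussion after Definition~\ref{def:main} (where we may assume $G_m \supseteq G_{m+1}$, so the limits exist), I get $\lim_m \mu(G_m) = \sum_i \alpha_i \lim_m \mu_i(G_m) = 0$. Hence $\mu$ is \MLac, and the class is convex.

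Next I would show that a \MLac\ Dirac measure $\delta_Z$ (equivalently, by Example (b), one where $Z$ is \ML-random) is an extreme point. The standard fact is that a Dirac measure is extreme among \emph{all} probability measures: if $\delta_Z = \alpha \mu_0 + (1-\alpha)\mu_1$ with $0 < \alpha < 1$, then evaluating on the singleton $\{Z\}$ forces $\mu_0(\{Z\}) = \mu_1(\{Z\}) = 1$, so $\mu_0 = \mu_1 = \delta_Z$. Being extreme in the larger set immediately gives extremality in the subset of \MLac\ measures, so no further work is needed here beyond citing this elementary argument.

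The hard part will be the converse: showing that every extreme point $\mu$ of the \MLac\ set is a Dirac measure. The key idea is to use the localisation operation from Definition~\ref{def:local}: if $\mu$ is not concentrated on a single point, I want to split it nontrivially into two \MLac\ pieces. Concretely, if $\mu$ is not a Dirac measure, then there is a string $\sss$ with $0 < \mu[\sss] < 1$; I would then write $\mu = \mu[\sss]\cdot \mu_\sss + (1 - \mu[\sss])\cdot \mu_{\ol{\sss}}$, a genuine convex combination where $\mu_\sss$ is the localisation to $[\sss]$ and $\mu_{\ol\sss}$ is the localisation to the complement $\cantor \setminus [\sss]$ (a finite union of cylinders). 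By the remark in Definition~\ref{def:local}, localisations of \MLac\ measures remain \MLac, and the complementary localisation is likewise \MLac\ (it is a finite convex combination of localisations to the cylinders $[\tau]$ disjoint from $[\sss]$ with $\mu[\tau] > 0$, each of which is \MLac). Since $\mu_\sss$ and $\mu_{\ol\sss}$ are distinct (they are supported on disjoint sets), this exhibits $\mu$ as a nontrivial convex combination of two distinct \MLac\ measures, contradicting extremality. Therefore $\mu$ must be a Dirac measure $\delta_Z$, and by Example (b) the \MLac\ requirement forces $Z$ to be \ML-random. The one subtlety to handle carefully is the case where no such splitting string exists yet $\mu$ still has more than one point in its support; I would address this by noting that if $\mu$ is not a Dirac measure then its support contains two distinct sequences, which differ at some finite stage, yielding a string $\sss$ that separates them and hence satisfies $0 < \mu[\sss] < 1$.
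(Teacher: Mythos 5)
Your proposal is correct and follows essentially the same route as the paper's proof: convexity via linearity of the test-passing condition, and the localisation decomposition at a finite depth to exhibit any non-Dirac \MLac\ measure as a nontrivial convex combination of \MLac\ measures. The only differences are cosmetic --- you use a binary split into $[\sss]$ and its complement where the paper decomposes over all strings of a fixed length $t$, and you spell out the standard argument that Dirac measures are extreme, which the paper leaves implicit.
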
 

\begin{proof} Let $\mu = \sum_i \alpha_i \mu_i $
as above where the $\mu_i$ are \MLac\ measures. Suppose $\seq
{G_m}$ is a \ML\ test. Then
 $\lim_m \mu_i(G_m) = 0$ for each $i$, and hence $\lim_m \mu(G_m) =0$. 

Suppose that $\mu$ is \ML\
a.c. If $\mu$ is a Dirac measure then it is an extreme point of the \ML\
a.c.\ measures. Conversely, if $\mu$ is not Dirac, there is a least
number $t$ such that the decomposition $$\mu = \sum_{\sssl = t,
\mu[\sss] >0} \mu [\sss] \cdot \mu_\sss$$ is nontrivial. 
Hence $\mu$ is not an extreme point. 
\end{proof} 

\noindent
Recall that a $\rho$-\emph{Solovay-test}, for a computable 
probability measure $\rho$, is a sequence $\seq
{S_k}\sN k$ of uniformly $\SI 1$ sets such that $\sum_k \rho (S_k) <
\infty$. If $\rho = \leb$ we will simply use the term \emph{Solovay test}. 
A bit sequence $Z$ passes such a test if $Z \not \in S_k$ for almost
every $k$. (Each \ML-test is also a Solovay test, but  Solovay tests have a stronger  passing condition). A basic fact from the theory of
algorithmic randomness (e.g.\ \cite[Theorem 3.2.9]{Nies:book}) states that 
$Z$ is \ML-random iff $Z$ passes each Solovay test. This is usually
asserted only relative to the uniform measure $\leb$, but in fact carries
over to arbitrary measures~$\rho$.

The following characterises the \MLac\ measures with countable support.

\begin{fact} \label{fact:random_examples}
Let $\mu = \sum_k c_k \delta_{Z_k}$ where
$0 < c_k \leq 1$ for each $k$, and $\sum_k c_k = 1$.
Then $\mu$ is \MLac\ iff all the $Z_k$ are \ML\ random.
\end{fact}

\begin{proof} The implication from left to right is immediate. For
the converse implication, given a \ML\ test $\seq{G_m}$, note that
the $Z_k$ pass this test as a Solovay test. Hence for each~$r$, there
is $M$ such that for each $k \le r$ and each $m \ge M$ we have $Z_k \not \in G_m$ .
This implies that $\mu(G_m) \le \sum_{k>r} c_k$ for each $m \ge M$.
So $\lim_m \mu(G_m) =0$.
\end{proof}

\n
We say that a measure $\mu$ \emph{passes} a Solovay test $\seq
{S_k}\sN k$ if $\lim_k \mu(S_k) =0$.
The fact that passing all  \ML\ tests is  equivalent to passing all 
Solovay tests generalises from bit sequences to measures. 
We note that Tejas Bhojraj
(in preparation) proved  such a result in even greater generality in
the  setting of quantum states, where the proof is more involved.

\begin{proposition} For probability measures $\mu, \rho$ such that
$\rho$ is computable, we have

$\mu \ll_{ML} \rho$ iff $\mu$ passes
each $\rho$-Solovay-test. \end{proposition}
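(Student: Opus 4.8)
The plan is to prove the two implications separately; the reverse one is immediate and the forward one carries the content. For the direction from Solovay tests to \ML\ absolute continuity, I would argue as follows. Given a $\rho$-ML-test $\langle G_m\rangle$, the bound $\rho(G_m)\le 2^{-m}$ gives $\sum_m\rho(G_m)<\infty$, so $\langle G_m\rangle$ is itself a $\rho$-Solovay-test (as already noted in the excerpt, every ML-test is a Solovay test). If $\mu$ passes every $\rho$-Solovay-test, then in particular $\lim_m\mu(G_m)=0$, whence $\inf_m\mu(G_m)=0$, and therefore $\mu\ll_{ML}\rho$.

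For the converse, assume $\mu\ll_{ML}\rho$ and fix a $\rho$-Solovay-test $\langle S_k\rangle$ with $c:=\sum_k\rho(S_k)<\infty$. First I would reduce the goal $\lim_k\mu(S_k)=0$ to the weaker-looking statement $\mu(\limsup_k S_k)=0$. Writing $T_k=\bigcup_{j\ge k}S_j$, the sets $T_k$ decrease to $\bigcap_k T_k=\limsup_k S_k$, so continuity from above of the finite measure $\mu$ gives $\lim_k\mu(T_k)=\mu(\limsup_k S_k)$; since $S_k\subseteq T_k$, once $\mu(\limsup_k S_k)=0$ it follows that $\lim_k\mu(S_k)\le\lim_k\mu(T_k)=0$.

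It then remains to show $\mu(\limsup_k S_k)=0$, which I would obtain from a single ML-test built by the standard counting construction. Put $V_j=\{Z:Z\in S_k\text{ for at least }j\text{ values of }k\}$; each $V_j$ is $\SI 1$ uniformly in $j$, the sequence is decreasing, and applying Markov's inequality to $f(Z)=|\{k:Z\in S_k\}|$ (so that $\int f\,d\rho=c$) yields $\rho(V_j)\le c/j$. Choosing $d\in\NN$ with $2^d\ge c$ and setting $G_m=V_{2^{m+d}}$ gives $\rho(G_m)\le 2^{-m}$, so $\langle G_m\rangle$ is a genuine $\rho$-ML-test. Applying $\mu\ll_{ML}\rho$ together with continuity from above to the decreasing sequence $\langle G_m\rangle$ gives $\mu(\bigcap_m G_m)=\lim_m\mu(G_m)=0$, and $\bigcap_m G_m=\bigcap_j V_j=\limsup_k S_k$, as desired.

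I expect the main subtlety to be the passing condition itself: for measures one must control each individual value $\mu(S_k)$, which is strictly stronger than $\mu(\limsup_k S_k)=0$, so the monotone-tail step via continuity from above is essential rather than mere bookkeeping. A secondary point worth flagging is that the shift constant $d$ need not be computable from the test; this causes no difficulty, since for each fixed $d$ the reindexing $m\mapsto 2^{m+d}$ is computable, and the definition of \ML\ absolute continuity only requires that \emph{some} legitimate $\rho$-ML-test witness the failure, not that it be produced uniformly.
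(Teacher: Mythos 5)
Your proof is correct, and both directions land where they should, but the forward direction is organized differently from the paper's. The paper obtains $\mu(\limsup_k S_k)=0$ by citing the classical theorem that every $\rho$-ML-random sequence passes every $\rho$-Solovay test, so that $\limsup_k S_k$ is contained in the set of non-$\rho$-ML-random sequences, which is $\mu$-null by the universal-test characterization of $\mu\ll_{ML}\rho$; it then concludes $\limsup_k\mu(S_k)\le\mu(\limsup_k S_k)$ by the reverse Fatou lemma. You instead re-prove the classical fact in exactly the form you need: the counting sets $V_j$ with $\rho(V_j)\le c/j$ give, after a computable reindexing, a single $\rho$-ML-test whose intersection is precisely $\limsup_k S_k$, so the definition of $\mu\ll_{ML}\rho$ applies directly without invoking a universal test; and you replace Fatou by the monotone tails $T_k=\bigcup_{j\ge k}S_j$ together with continuity from above of the finite measure $\mu$. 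Your route is more self-contained (only Markov's inequality and continuity from above, no cited equivalence and no Fatou), at the cost of redoing the standard Solovay-to-ML conversion; the paper's is shorter because it leans on the known result for bit sequences. Your two closing remarks are both on point: controlling each individual $\mu(S_k)$ rather than just $\mu(\limsup_k S_k)$ is exactly the extra content in the measure setting, and the non-uniformity of the constant $d$ is harmless since only the existence of a witnessing $\rho$-ML-test is required.
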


\begin{proof} Relative to $\rho$, each \ML\ test is a Solovay test,
and the passing condition $\lim_m \mu(G_m) =0$ works for both types of
tests by the remark after Definition~\ref{def:main}. This yields the
implication from right to left. 

For the implication from left to right, suppose that $\mu \ll_{ML}
\rho$, and let $\seq {S_k}\sN k$ be 
a $\rho$-Solovay-test. By $\limsup_k S_k $ one denotes the set of bit
sequences $Z$ such that $\ex^\infty k \,[Z \in S_k]$, that is, the
sequences that fail the test. By the basic fact (e.g.\
\cite[Theorem 3.2.9]{Nies:book}) mentioned above, the set of $\rho$-\ML-random
sequences is disjoint from $\limsup_k S_k $. Hence, by hypothesis on
$\mu$, we have $\mu(\limsup_k S_k)= 0 $. By Fatou's Lemma, $\limsup_k
\mu(S_k) \le \mu (\limsup_k S_k) $. So $\mu $ passes the Solovay test. 
\end{proof}

\section{A fast growing initial segment complexity implies being ML-a.c.}
\label{s:insegs growth}

\n
In this section we formally define our two variants of initial segment
complexity for measures, and    first their connections with ML absolute
continuity.

Recall that in this paper we use $C(x)$ to denote the plain
Kolmogorov complexity of a binary string $x$ and $K(x)$ to denote the
prefix-free Kolmogorov complexity.
Note that  other letters are also used in the literature; for example,
the textbook by Calude~\cite{Calude:book} uses $K$ and $H$, a notation
going back to Chaitin.

Recall the upper bounds  $C(x) \lep n$ and $K(x) \lep n + K(n)$
for all strings $x$ of length~$n$.
We assume for notational convenience the strict lower bounds
that $C(n) \leq C(x)$ and $K(n) \leq K(x)$. This convention
reduces the number of constants needed in the statements
and proofs of our results.
To achieve this, we replace some given universal machine $V$ by a new
universal machine $U$ such that whenever $V(p) = x$ then
$U(0p) = 0^{|x|}$ and $U(1p) = x$ with $0^n$ standing for $n$,
and $U$ is  undefined on all other inputs. It is easy to see
that if $V$ is universal by adjunction, so is $U$;   if
$V$ is prefix-free so is $U$. The descriptions for $U$ are only by  one bit
longer than those for $V$ and universality requires only that the
length of the shortest descriptions for some $x$ goes up at most by a constant.

\begin{definition} \label{def:ISC} Let
$K(\mu \uhr n):= \sum_{|x| =n}K(x) \mu[x] $ be the $\mu$-average of
all the $K(x)$ over all strings~$x$ of length $n$. In a similar
way  we define $C(\mu \uhr n)$. We also define the descriptive complexity
conditioned on $n$, namely $C(\mu \uhr n \mid n)$, as expected.
\end{definition}

\n It may help to think of $K(\mu \uhr n)$ as $\int  K_n d\mu_n$ where $K_n$ and $\mu_n$ are the respective functions restricted to the set of strings of length $n$.
Note that for each bit sequence $Z$, we have $K(\delta_Z \uhr n) = K(Z\uhr n)$.

In this section we tacitly rely on standard inequalities such as
$C(x) \lep K(x)$,
$K(x) \lep |x| +  2\log |x|$ and $K(0^n) \lep 2 \log n$. We also use
that for each~$r\in \NN$,  there are at most $2^r-1$ strings such that $C(x) <
r$. See e.g.\ \cite[Chapter~2]{Nies:book}. 

Note that for each string $x$ of length $n$,   the deviations from the upper bounds  are related by the inequality $n-C(x) \lep 2(n+ K(n)- K(x))$. For this, see  e.g.\ \cite[2.2.5]{Nies:book} which yields the inequality after some elementary algebraic manipulation. Taking the $\mu$-average,  this   implies the  corresponding inequality for the  initial segment complexity of  measures on Cantor space: 

\begin{fact} \label{fact: compare CK} Let $\mu$ be a measure. For each $n$ one has 
\[ n-C(\mu\uhr n) \lep 2(n+ K(n)- K(\mu \uhr n)). \]
\end{fact}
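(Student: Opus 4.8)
The plan is to derive the measure-level inequality by integrating the
pointwise inequality promised just before the statement, namely
\[ n-C(x) \lep 2(n+ K(n)- K(x)) \]
which holds for every string $x$ of length $n$ (with an implicit additive
constant independent of $x$ and $n$, as supplied by \cite[2.2.5]{Nies:book}).
First I would make the constant explicit: there is a fixed $c$ such that for
all $n$ and all $x$ with $|x|=n$,
\[ n-C(x) \le 2\bigl(n+ K(n)- K(x)\bigr) + c. \]
The point is that the slack $c$ does not depend on $x$, so it survives the
averaging unchanged.

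Next I would take the $\mu$-average over all strings of length $n$, weighting
each $x$ by $\mu[x]$. Since $\sum_{|x|=n}\mu[x]=1$, integrating a constant
returns that same constant, and the average of a sum is the sum of averages;
concretely, applying $\sum_{|x|=n}\mu[x]\cdot(\,\cdot\,)$ to both sides and
using linearity gives
\[ \sum_{|x|=n}\mu[x]\,\bigl(n-C(x)\bigr)
   \;\le\; 2\sum_{|x|=n}\mu[x]\,\bigl(n+K(n)-K(x)\bigr) + c. \]
By Definition~\ref{def:ISC} the left side is $n-C(\mu\uhr n)$ and the right
side is $2\bigl(n+K(n)-K(\mu\uhr n)\bigr)+c$, since $n$ and $K(n)$ are constant
across the strings of length $n$ and the weights sum to $1$. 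This is exactly
the asserted inequality
\[ n-C(\mu\uhr n) \lep 2\bigl(n+ K(n)- K(\mu \uhr n)\bigr). \]

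The only subtlety, and the single point deserving a word of care, is the
uniformity of the additive constant: the inequality $n-C(x)\lep 2(n+K(n)-K(x))$
must hold with an $O(1)$ term that is uniform in both $x$ and $n$, rather than
one depending on the length. This is precisely what \cite[2.2.5]{Nies:book}
provides, so there is no real obstacle; the averaging step is then purely
linear and preserves the uniform constant because $\mu$ is a probability
measure. I would therefore keep the proof to a single line invoking the
pointwise bound and integrating against $\mu$, remarking only that the constant
is independent of the string so that it passes through the average intact.
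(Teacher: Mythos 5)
Your proposal is correct and matches the paper's own argument: the paper likewise takes the pointwise inequality $n-C(x)\lep 2(n+K(n)-K(x))$ for strings of length $n$ (citing \cite[2.2.5]{Nies:book}) and obtains the fact by taking the $\mu$-average, exactly as you do. Your remark that the additive constant must be uniform in $x$ and $n$ so that it passes through the averaging is the right point of care, and nothing further is needed.
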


The following says that the uniform measure $\leb$ on $\cantor$ has the
fastest growing initial segment complexity that is possible in the sense
  of $K$, and therefore also of~$C$.

\begin{fact} \label{fast}  $K(\leb \uhr n) \gep n+K(n)$, and therefore also $C(\leb \uhr n) \gep n$.
\end{fact}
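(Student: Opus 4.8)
The plan is to reduce the claim to a statement about the uniform average of $K(x)$ over strings of length $n$. Since $\leb[x] = 2^{-n}$ for every $x$ of length $n$, Definition~\ref{def:ISC} gives $K(\leb\uhr n) = 2^{-n}\sum_{|x|=n} K(x)$, so it suffices to show that this average is at least $n + K(n) - O(1)$. The upper bound $K(x) \lep n + K(n)$ shows the average is also $\lep n + K(n)$, so in fact the average will equal $n + K(n)$ up to $O(1)$.

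First I would extract the additive term $K(n)$ by conditioning on a shortest program $n^*$ for the length. Since $n = |x|$ is computable from $x$, we have $K(x) = K(x,n) + O(1)$, and the symmetry of information for prefix-free complexity (see e.g.\ \cite{Nies:book}) yields $K(x) = K(n) + K(x\mid n^*) + O(1)$, uniformly in all $x$ of length $n$. Averaging, it then remains to prove that $2^{-n}\sum_{|x|=n} K(x \mid n^*) \ge n - O(1)$. It is important here that the condition be $n^*$ rather than the plain number $n$: conditioning on $n$ only produces an upper bound for $K(x)$, via $K(x\mid n^*) \le K(x\mid n) + O(1)$, which is the wrong direction.

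The heart of the argument is the counting fact that most strings of length $n$ are incompressible given $n^*$. For any fixed condition $w$, Kraft's inequality $\sum_x 2^{-K(x\mid w)} \le 1$ gives $|\{x : K(x\mid w) \le t\}| \le 2^t$ for every threshold $t$. Taking $w = n^*$ and summing the resulting bound over $j \ge 1$, the total deficiency of the length-$n$ strings satisfies
\[ \sum_{|x|=n} \bigl(n - K(x\mid n^*)\bigr)^+ = \sum_{j\ge 1} |\{x : |x|=n,\ K(x\mid n^*) \le n-j\}| \le \sum_{j\ge 1} 2^{n-j} = 2^n. \]
Hence the average deficiency is at most $1$, so $2^{-n}\sum_{|x|=n} K(x\mid n^*) \ge n - 1$. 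Combining with the previous paragraph gives $K(\leb\uhr n) \ge n + K(n) - O(1)$, which is the asserted $K(\leb\uhr n) \gep n + K(n)$.

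Finally, the statement for $C$ is immediate from Fact~\ref{fact: compare CK}: taking $\mu = \leb$ there, the quantity $n + K(n) - K(\leb\uhr n)$ is $O(1)$ by what we just proved, so $n - C(\leb\uhr n) \lep 0$, that is, $C(\leb\uhr n) \gep n$. The main obstacle is the second paragraph: one has to notice that a direct Kraft count on the unconditional $K(x)$ is useless, since the bound $|\{x : K(x) \le n + K(n) - c\}| \le 2^{n+K(n)-c}$ is inflated by the factor $2^{K(n)}$ and need not beat the total $2^n$ of all strings; it is precisely the symmetry-of-information step that isolates the additive $K(n)$ and lets the incompressibility count run against the clean threshold $n$.
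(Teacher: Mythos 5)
Your proof is correct, and the averaging skeleton is the same as the paper's: bound the number of length-$n$ strings whose complexity falls $j$ below the maximum by a geometric quantity, sum the layers, and conclude that the average deficiency is $O(1)$. Where you differ is in how you obtain the counting bound. The paper simply cites Chaitin's theorem that at most $2^{n+c-d}$ strings of length $n$ satisfy $K(x) \le n + K(n) - d$, and runs the layer-cake sum directly on the unconditional deficiency $n + K(n) - K(x)$. You instead first split $K(x) = K(n) + K(x \mid n^*) + O(1)$ by symmetry of information and then apply Kraft's inequality to the conditional complexities, which is in effect an inline proof of the Chaitin counting theorem rather than an appeal to it. Your route is more self-contained and makes transparent exactly why the clean threshold $2^t$ appears (your closing remark about why the unconditional Kraft count is useless is a genuine insight that the paper's citation hides); the paper's route is shorter because the counting lemma is standard and already stated in the reference given (\cite[Thm.\ 2.2.26]{Nies:book}). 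Both arguments deduce the $C$-statement the same way, via Fact~\ref{fact: compare CK}. One cosmetic point: the identity you and the paper both use is really the inequality $K(x) \ge n + K(n) - (\text{deficiency})^+$ (equality only when $K(x)$ does not exceed the upper bound), but you correctly work with the positive part throughout, so nothing is lost.
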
 

\begin{proof}
Chaitin \cite{Chaitin:75} showed that there is a constant $c$ (dependent only 
on the universal prefix-free machine)
such that, for all $d$,  among the strings of length $n$, there are at most
$2^{n+c-d}$ strings with $K(x) \leq n+K(n)-d$; also see \cite[Thm.\ 2.2.26]{Nies:book}.

Fix $n$. For $d  \in \NN - \{0\}$  let $A_d$ be the set of strings of length $n$ such that $K(x) \le n + K(n) -d$.  Identifying sets of strings of the same length and the clopen sets they generate, by Chaitin's result we have $\leb A_d \le \tp{c-d}$.

Clearly  $K(x) =  n+ K(n) - \sum_{d >0} 1_{A_d}(x)$ whenever $|x| = n$. Taking the $\leb$-average over all strings of length $n$ we obtain
 
\begin{center}  $\tp{-n} \sum_{|x| = n} \ge n + K(n)  - \sum_{d> 0} \leb (A_d) \ge n + K(n) - \sum_{d>0}\tp{c-d} = n - \tp{c+1}$. \end{center} 
 \end{proof}

We provide an example of a computable probability measure $\mu\neq \leb$ that satisfies the hypothesis of Fact~\ref{fast}.  Informally, along a string, $\mu $ puts $1/3$ on $0$ until it hits the first~$1$;  from then on $\mu$ behaves like the  uniform measure~$\leb$. 

Recall from~\ref{def:local} on the localization $\nu_\sss$ of a measure $\nu$ to a string $\sss$. 
%
For $r\in \NN -\{0\}$ let $\sss(r)$ denote the string $ 0^{r-1} 1$. Note that for a string $y$ of length $n-r$ one has $K(\sss(r) \ape  y)\gep K(y)$ (where the additive constant is independent of both $n$ and $r$). So by Fact~\ref{fast} for $n \ge r$ one has $K(\leb_{\sss(r)}) \gep n-r +K(n-r)$.   
\begin{fact}  The computable measure $\mu= 2\sum_{r>0} 3^{-r} \leb_{\sss(r)}$ satisfies $K(\mu \uhr n) \gep n+K(n)$. \\ Moreover,  $\mu$ is    absolutely continuous. \end{fact}

\begin{proof} \begin{eqnarray*} K(\mu \uhr n) &=& 2 \sum_{r=1}^\infty  3^{-r}  K(\leb_{\sss(r)})  \\
					&\gep &2 \sum_{r=1}^n  3^{-r}  (n-r +K(n-r)) \\
					&\gep &n -  2 \sum_{r=1}^n  r \cdot 3^{-r}    +2 \sum_{r=1}^n  3^{-r}  K(n-r) \\
					&\gep & n+K(n) \end{eqnarray*}
The last inequality holds because $2 \sum_{r=1}^\infty  r \cdot  3^{-r}  $ is finite, and because $K(n-r) \gep K(n)+ 2\log r$. 

Clearly $\mu$ is computable; $\mu$ is absolutely continuous because when viewed as a measure on the unit interval we have  $d\mu = g d\leb$ where the function $g\colon [0,1] \to \RR$ is given by  $g(0)= 0$,     $g(x) =  2\cdot (\frac 2 3)^r$ for $x$  in the interval $(\tp{-r-1}, \tp{-r}]$ where $r>0$ (this interval  corresponds to $\sss_r$).\end{proof}

\noindent
Li and Vitanyi  \cite{Li.Vitanyi:book} called a bit sequence $Z \in \cantor $  
  Kolmogorov random if there is $r$ such that $C(Z \uhr n) \ge
n-r$ for infinitely many $n$. One says that $Z$ is strongly Chaitin random
if there is $r$ such that  
 $K(Z \uhr n) \ge n+K(n)-r$ for infinitely many $n$. For bit sequences
these notions are 
equivalent to 2-random\-ness (i.e., ML-randomness relative to the halting problem) by \cite{Nies.Stephan.ea:05} and
\cite{Miller:10}, respectively;   also  see 
\cite[Theorem 8.1.14]{Nies:book} or \cite{Downey.Hirschfeldt:book}. 

One can  extend  these notions to measures. We call  $\mu$ strongly Chaitin random if  $K(\mu \uhr n) \gep n+K(n)$ for infinitely many $n$. Similarly we  define Kolmogorov randomness of a  measure. By Fact~\ref{fact: compare CK} the following is immediate:
\begin{fact} \label{fact:CK imply}  If a measure $\mu$ is strongly Chaitin random, then $\mu$ is Kolmogorov random. \end{fact}

We leave it open whether the reverse implication holds. The potentially weaker  notion  already   implies being \MLac:

\begin{theorem} \label{thm:Inseg MLR} Suppose that $\mu $ is a
Kolmogorov random measure.  Then $\mu $ is  \MLac. 
 \end{theorem}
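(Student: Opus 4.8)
The plan is to prove the contrapositive: if $\mu$ is not \MLac, then $n - C(\mu\uhr n)\to\infty$, which is precisely the statement that $\mu$ fails to be Kolmogorov random. By the Levin--Schnorr theorem (Theorem~\ref{thm:LS}) the non-\ML-random sequences form the set $\bigcap_b \+ R_b$, where $\+ R_b = \{Z : \exists n\, K(Z\uhr n) < n-b\}$ and $\+ R_{b+1}\subseteq \+ R_b$. Since $\mu$ is not \MLac, Fact~\ref{fa:univ} gives $\delta := \mu(\bigcap_b \+ R_b) = \inf_b \mu(\+ R_b) > 0$, so $\mu(\+ R_b)\ge\delta$ for every $b$. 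It therefore suffices to show that for each $b$ the deficiency $n - C(\mu\uhr n)$ is eventually at least about $b\delta$; letting $b\to\infty$ then forces the limit to be infinite.

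Fix $b$. Since $\+ R_b = \bigcup\{[\sigma] : K(\sigma) < |\sigma|-b\}$ is effectively open with $\mu(\+ R_b)\ge\delta$, continuity of measure yields a finite subfamily of these generating cylinders whose union has measure $\ge\delta/2$; keeping only prefix-minimal strings we obtain a prefix-free set $F_b$, each $\sigma\in F_b$ still satisfying $K(\sigma)<|\sigma|-b$, with $\sum_{\sigma\in F_b}\mu[\sigma] = \mu(\bigcup_{\sigma\in F_b}[\sigma]) \ge \delta/2$. Set $N_b = \max_{\sigma\in F_b}|\sigma|$. The heart of the argument is a uniform compressibility bound: for every $n\ge N_b$ and every $x$ of length $n$ extending some $\sigma\in F_b$, I claim $C(x)\le n-b+c$ with $c$ an \emph{absolute} constant. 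To see this, describe $x=\sigma y$ by concatenating a shortest self-delimiting (prefix-free) program for $\sigma$, of length $K(\sigma)$, with the literal tail $y$. A single fixed plain machine decodes $\sigma$ from the self-delimiting prefix and then reads all remaining input bits as $y$, outputting $\sigma y$; crucially, for \emph{plain} complexity there is no need to spell out $n$ or $|y|$, since the tail simply occupies the rest of the program. This program has length $K(\sigma)+(n-|\sigma|) < (|\sigma|-b)+(n-|\sigma|) = n-b$, so $C(x)\le n-b+c$.

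Finally I average. Writing $n - C(\mu\uhr n) = \sum_{|x|=n}(n-C(x))\,\mu[x]$ and splitting the strings of length $n\ge N_b$ into those extending some $\sigma\in F_b$ (total $\mu$-mass $\ge\delta/2$, each contributing $n-C(x)\ge b-c$) and the remainder (each contributing $n-C(x)\gep 0$ by the standard bound $C(x)\lep n$), I obtain $n-C(\mu\uhr n)\ge (b-c)\,\delta/2 - O(1)$ for all $n\ge N_b$. Hence $\liminf_n\bigl(n-C(\mu\uhr n)\bigr) \ge (b-c)\,\delta/2 - O(1)$ for every $b$, and since the sequence $n-C(\mu\uhr n)$ is bounded below this forces $\lim_n\bigl(n-C(\mu\uhr n)\bigr)=\infty$, so $\mu$ is not Kolmogorov random.

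The step I expect to be delicate, and the crux of the whole proof, is the compressibility bound with a constant $c$ independent of $b$, $n$, and the family $F_b$: a naive encoding that records the block length would cost an extra $K(n)$ bits and destroy the $b\to\infty$ limit (this is exactly why routing through the $K$-version via Fact~\ref{fact: compare CK} goes the wrong way). It is precisely the feature of plain complexity that the tail may consume the remainder of the program which keeps $c$ absolute; one must also take care that the self-delimiting prefix is decoded by the genuine universal prefix-free machine underlying $K$, so that a program of length $K(\sigma)$ is actually available.
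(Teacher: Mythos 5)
Your proof is correct, and while it follows the same overall skeleton as the paper's argument --- a measure that fails a test at level $\delta$ has, on $\mu$-average, compressible length-$n$ initial segments, and averaging then contradicts Kolmogorov randomness --- the mechanism by which you obtain the pointwise compression is genuinely different. The paper works with an \emph{arbitrary} ML test $\seq{G_d}$ that $\mu$ fails at level $\delta$ and proves its compression lemma by counting: since $\leb G_d \le \tp{-d}$, at most $\tp{n-d}$ strings $x$ of length $n$ satisfy $[x]\sub G_d^{\le n}$, so such $x$ can be indexed by $(n-d)$-bit strings given $d$, yielding $C(x\mid d)\lep n-d$ and hence $C(x)\le n-d+2\log d+c$ after paying for $d$. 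You instead pass to the universal Levin--Schnorr test $\seq{\+ R_b}$ via Fact~\ref{fa:univ}, whose generating cylinders are $K$-compressible by definition, and transfer that to plain compressibility of their length-$n$ extensions through the concatenation bound $C(\sigma y)\le K(\sigma)+|y|+O(1)$. This buys you an absolute constant $c$ independent of $b$ --- the paper must absorb a $2\log d$ overhead, which it does by choosing $d$ of order $r/\delta$ --- at the price of invoking Levin--Schnorr and a small bookkeeping step (extracting a finite prefix-minimal subfamily $F_b$ of mass at least $\delta/2$ so that the length threshold $N_b$ is uniform, playing the role of the paper's $G_d^{\le n}$). Both delicate points are handled correctly: the machine that decodes a self-delimiting program for $\sigma$ and then treats the remainder of its input as the literal tail is the standard witness for $C(\sigma y)\le K(\sigma)+|y|+O(1)$, with a coding constant independent of $b$, $n$ and $F_b$; and your averaging, in which the strings not extending $F_b$ cost only an additive $O(1)$, gives $\liminf_n \bigl(n - C(\mu\uhr n)\bigr) \ge (b-c)\delta/2 - O(1)$ for every $b$, which is exactly what is needed to refute Kolmogorov randomness.
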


\begin{proof} By hypothesis  there is   $r\in \NN$ such that  $C(\mu\uhr n) \ge n-r$ for
infinitely many $n$. Assume for a contradiction  that $\mu $ is not  \MLac. So
there is a \ML\ test $\seq {G_d} \sN d$ and $\delta > 0$ such that
$\mu (G_d) \ge   \delta$ for each $d$.
We  view $G_d$ as given by an enumeration of strings, uniformly in
$d$; thus $G_d = \bigcup_i [\sss_i]$  for a 
sequence $\seq {\sss_i}\sN i$ that is computable uniformly in~$d$. 
Let $G_d^{\le n}$ denote the clopen set
generated  by the strings in this enumeration of length at most $n$.
(Note that this set is not effectively given as a clopen set, but we
effectively have a  description of it as a $\SI 1$ set). Let $c$ be a constant
such that, for each~$x$ of length~$n$, one has  $C(x) \le n + c$. 

\begin{lemma} If $x$ is a string of length $n$ such that $[x] \sub
G_d^{\le n}$ then $C(x\mid  d) \lep n-d$. \end{lemma} 

\n
To  verify this, 
let $N$ be a fixed plain machine that on input $y$ and auxiliary input
$d$  prints out the $y$-th string  $x$ of length $n=|y| +d$ such that
the   enumeration of $G_d^{\le n}$ asserts that $[x] \sub G_d^{\le
n}$. (Here we view $y$ as the binary representation of a number, with
leading zeros allowed.)   Since $\leb G_d \le \tp{-d}$,  sufficiently
many strings are available to print all such $x$. This machine shows
that $C(x\mid  d) \lep n-d$ for any $x$ such that $[x] \sub G_d^{\le
n}$, as required. This verifies the lemma.


Note that (after possibly increasing the constant $c$) any    $x$ as above satisfies  $C(x) \le n  - d + 2 \log d +c$.
%
  For each $d,n$, letting $x$ range over strings of length $n$, we have 
\begin{equation*}C(\mu \uhr n) = \sum_{|x|=n} C(x) \mu[x] 
      =   \sum_{ [x] \sub G_d^{\le n}} C(x) \mu[x] + \sum_{ [x]
\not \sub G_d^{\le n}} C(x) \mu[x]. 
\end{equation*}
\n
The first summand is bounded above  by  $\mu(G_d^{\le n}) (n -d +
2 \log d +c )$ via the lemma, the second by $(1-\mu(G_d^{\le n})) (n +c)$.  We
obtain
 $$C(\mu \uhr n) \le n+ c - \mu(G_d^{\le n}) d/2.$$ 
Now for each $d$, for sufficiently large $n$ we have
$\mu(G_d^{\le n}) \ge  \delta$. So given $r$ letting,  $d = 2r/\delta$, for large enough $n$ we have $C(\mu\uhr n) \le n + c -r$. This contradicts the hypothesis on~$\mu$.  
%
\end{proof}

\n
It would be interesting to find out  whether the above-mentioned
coincidences of randomness notions for bit sequences lift to
measures; for instance, 
do the conditions in the theorem above actually imply that the measure is 
\MLac\ relative to the halting problem $\Halt$? 

\section{Both implications of the Levin-Schnorr Theorem fail for measures}
  \label{s:inseg}

\n
We will show that both implications of the analog of the Levin-Schnorr
Theorem~\ref{thm:LS} fail for measures.
The implication from left to right would say that a \MLac\ measure
cannot have an initial segment complexity in the sense of $K$ growing
slower than $n- O(1)$. This can be disproved by a simple
example of a measure with countable support.

Note that, by Proposition~\ref{prop: dim =1} below, we have
$\lim_n K(\mu\uhr n)/n=1$ for each \MLac\ measure $\mu$,
which provides a lower bound on the growth.

\begin{example} \label{ex:slow growth}
There is a \MLac\ measure $\mu$ such that for each $\theta
\in (0,1)$, one has $K(\mu\uhr n) \lep n-n^\theta$.
\end{example}

\begin{proof} We let $\mu = \sum_k c_k \delta_{Z_k}$ where $Z_k$ is \ML\
random and $0^{n_k} \prec Z_k$ for a sequence $\seq {c_k}\sN k $ of reals in
$[0,1]$ that add up to $1$, and a sufficiently fast growing computable sequence
$\seq{n_k}\sN k$ to be determined below. Then $\mu$ is \MLac\ by
Fact~\ref{fact:random_examples}.

For $n$ such that $n_k \le n < n_{k+1}$ we have
\begin{eqnarray*}
K(\mu \uhr n) & \lep & (\sum_{l=0}^k c_l) \cdot (n +2 \log n) +
(\sum_{l=k+1}^\infty c_l) \cdot 2 \log n \\
& \lep & (1-c_{k+1}) n + 2 \log n.
\end{eqnarray*}
Hence, to achieve $K(\mu\uhr n) \lep n - n^\theta$ it suffices to
ensure that
 $c_{k+1} n_k \ge n_{k+1} ^\theta + 2 \log n_{k+1}$
 for almost all $k$.
 For instance, we can let $c_k = \frac 1 {(k+1) (k+2)}$ and $n_k = \tp{k+4}$.
\end{proof}

\n To disprove the implication from right to left in a potential
generalisation to measures of the Levin-Schnorr Theorem, we need
to provide a measure
$\mu$ such that $\mu $ is not \ML\
a.c.\ yet  $K (\mu \uhr n) \gep n$.  This is  immediate from the following fact on the
growth of the initial segment complexity for certain bit sequences $X,Y$, letting $\mu = (\delta_X + \delta_Y)/2$.

\begin{theorem}
There are a \ML\ random bit sequence $X$ and a
non-\ML-random bit sequence $Y$ such that,
for all $n$, $K(X \uhr n)+K(Y \uhr n) \geq^+ 2n$.
\end{theorem}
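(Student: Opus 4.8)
The plan is to construct $X$ and $Y$ together, with $Y$ non-random but carefully tied to $X$ so that wherever $Y$'s initial segments are compressible, $X$'s initial segments compensate by being maximally incompressible. The target inequality $K(X\uhr n)+K(Y\uhr n)\gep 2n$ is symmetric in shape but we need the asymmetry that $X$ is ML-random (so $K(X\uhr n)\gep n$ already, by the Levin-Schnorr Theorem~\ref{thm:LS}) while $Y$ is forced to be non-random. The simplest route I would try first is to take $X$ to be a $2$-random sequence, equivalently one satisfying strong Chaitin randomness: $K(X\uhr n)\ge n+K(n)-r$ for infinitely many $n$, and in fact for $X$ sufficiently random we get $K(X\uhr n)\gep n+K(n)$ along a dense enough set of lengths. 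Then set $Y$ to code $X$ in a redundant way, say $Y$ is the ``doubling'' $X(0)X(0)X(1)X(1)\cdots$ or some computable injection of $X$ into a sparse set of bits with the remaining bits zero; this makes $Y$ non-ML-random because it violates the law of large numbers, while $K(Y\uhr{2n})\gep K(X\uhr n)$ since $X\uhr n$ is recoverable from $Y\uhr{2n}$.

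The key steps I would carry out are, first, to fix the exact growth guarantee we can extract for $X$. Using that $X$ is ML-random we always have $K(X\uhr n)\ge n-b$; using a stronger hypothesis (2-randomness) we have $K(X\uhr n)\ge n+K(n)-r$ infinitely often. Second, I would define $Y$ as a computable recoding of $X$ designed so that $K(Y\uhr m)\ge K(X\uhr{f(m)}) - O(1)$ for the appropriate index $f(m)$, and so that $Y$ fails the law of large numbers (hence is non-random). Third, I would verify the combined bound $K(X\uhr n)+K(Y\uhr n)\gep 2n$ by splitting into the bits each sequence contributes: at length $n$, $X$ contributes at least $n-b$, and I need $Y$ to contribute at least $n-O(1)$ \emph{at the same length}, which is the delicate point since $Y$'s incompressibility must come from the $X$-information it encodes.

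The main obstacle will be aligning the lengths so that $Y\uhr n$ really does carry roughly $n$ bits of incompressible information at \emph{every} $n$, not just at a sparse set of lengths. A naive doubling construction only gives $Y\uhr{2n}$ the information of $X\uhr n$, contributing about $n$ rather than $2n$ to $K(Y\uhr{2n})$, which is too weak. To fix this I would instead interleave: let $Y$ agree with $X$ on, say, the even positions and be forced to be non-random through a thin modification on an extremely sparse set of odd positions (for example, planting a computable pattern on positions in a set of density zero so the law of large numbers still fails but $Y$ still encodes almost all of $X$). Concretely, the cleanest version is to take $Y$ to differ from a genuinely random $X'$ only on a computable sparse set $S$ of density zero, where we overwrite with $0$'s to kill randomness; then $K(Y\uhr n)\gep K(X'\uhr n) - |S\cap n|\cdot O(\log)$, and if $|S\cap n|=o(n)$ this still gives $K(Y\uhr n)\gep n-o(n)$. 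The trade-off between making $S$ sparse enough to preserve incompressibility and dense enough (on a scale that forces test failure) to guarantee non-randomness is where the real care is needed; I would expect to choose $S$ of density zero but infinite, using an effective test that $Y$ must fail while the removed information stays $o(n)$, so the sum $K(X\uhr n)+K(Y\uhr n)\ge (n-b)+(n-o(n))$ still clears $2n-O(1)$ once the $o(n)$ term is absorbed.
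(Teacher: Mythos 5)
There is a genuine gap, and it sits exactly at the point you flag as delicate. Since $Y$ must be non-\ML-random, the Levin--Schnorr theorem forces $\inf_n\,(K(Y \uhr n)-n)=-\infty$: for every $b$ there are lengths $n$ with $K(Y\uhr n)<n-b$. Hence the target $K(X\uhr n)+K(Y\uhr n)\gep 2n$ can only hold if the \emph{excess} $K(X\uhr n)-n$ dominates the deficiency of $Y$ at \emph{every} length, and this excess must be unbounded. Your final estimate uses only $K(X\uhr n)\ge n-b$ for $X$, and $K(Y\uhr n)\ge n-|S\cap n|-O(\log n)$ for $Y$; the sum is $2n-b-|S\cap n|-O(\log n)$, and since $S$ must be infinite (else $Y$ is still random), the term $|S\cap n|$ is unbounded and cannot be ``absorbed'' into an $O(1)$ constant. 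The $2$-randomness of $X$ you invoke early on gives extra complexity $K(X\uhr n)\ge n+K(n)-r$ only at infinitely many $n$, not at all $n$, so it cannot plug the hole either; and your intermediate goal that ``$Y$ contribute at least $n-O(1)$ at the same length'' is self-contradictory, since by Levin--Schnorr that would make $Y$ \ML-random.

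What is really needed --- and what the paper's proof supplies --- is a coordination between the pointwise excess of $X$ and the pointwise deficiency of $Y$. The paper takes $X$ to be a \emph{low} \ML-random sequence, so that the modulus function $f$ (where $K(X\uhr m)\ge m+3n$ for all $m\ge f(n)$) is limit-computable with recursively enumerable co-range (a Dekker deficiency set $E$). Setting $g(n)=\max\{m: f(m)\le n\}$ gives $K(X\uhr n)\ge n+3g(n)$ with $g$ unbounded but not computable. The Miller--Yu oscillation theorem (which requires no effectivity of $g$) then yields a random $Z$ with $K(Z\uhr n)\le n+g(n)/2$ infinitely often, and $Y$ is obtained by spreading $Z$ along positions determined by $E$, so that $K(Y\uhr n)\gep n-2g(n)$ (whence the sum is $\gep 2n+g(n)$) while the infinitely many dips of $Z$ of relative size $g(n)/4$ witness that $Y$ is not \ML-random. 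If you want to salvage your ``damage a random sequence on a sparse set $S$'' idea, you would have to choose $S$ with $|S\cap n|$ bounded by a fixed fraction of the excess $K(X\uhr n)-n$ for all $n$, and arrange that $S$ is effectively accessible enough both to recover the erased bits and to build the test that $Y$ fails; that is essentially the lowness-plus-deficiency-set machinery above, not a free choice of a computable density-zero set.
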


\begin{proof}
Let $X$ be a low Martin-L\"of random set (i.e., $X' \equiv_T \ES'$).
We claim that there is a strictly increasing
function $f$ such that the complement of the range of $f$ is a recursively
enumerable set $E$, and $K(X \uhr m) \geq m+3n$
for all $m \geq f(n)$. To see this, recall that $\lim _n K(X \uhr n) -
n = \infty$. Since $X$ is low there is a computable function $p$
such that for all $n$, $\lim_s p(n,s)$ is the
maximal $m$ such that $K(X \uhr m) \leq m+3n$.

Define
$f(n,s)$ for $n \le s$ as follows. $f(n,0) = n$; for $s>0$ let $n$ be
least such that $p(n,s) \geq f(n,s-1)$ or $n=s$. If $m \geq n$ and
$n < s$ then let $f(m,s)=s+m-n$ else let $f(m,s)=f(m,s-1)$.

Note that for
each $n$ there are only finitely many $s>0$ with $f(n,s) \neq f(n,s-1)$
and that almost all $s$ satisfy $f(n,s) > p(n,s)$, as otherwise
$f(n,s)$ would be modified either at $n$ or some smaller value.
Furthermore, $f(n,s) \neq f(n,s-1)$ can only happen if there is
an $m \leq n$ with $f(m,s-1) \leq p(m,s)$ and that happens only
finitely often, as all the $p(m,s)$ converge to a fixed value and
every change of an $f(m,s)$ at some time $s$ leads to a value above $s$.
Furthermore, once an element is outside the range of $f$, it will
never return, and so the complement of the range of $f$
is recursively enumerable.
So $f(n) = \lim_s f(n,s)$ is a function as required, which
verifies the claim. (The complement $E$ of the range of $f$ is called
a Dekker deficiency set in the literature \cite{Odifreddi:bookone}.)

Now let $g(n) = \max\{m: f(m) \leq n\}$ (with the convention that
$\max (\ES) = 0$). Since $g$ is unbounded, by a result of Miller and
Yu~\cite[Corollary 3.2]{Miller.Yu:11} there is a Martin-L\"of
random $Z$ such that there exist infinitely many $n$ with
$K(Z \uhr n) \leq n+g(n)/2$; note that the result of Miller
and Yu does not make any effectivity requirements on $g$. Let
$$Y = \{n+g(n): n \in Z\}.$$
Note that $K(Z \uhr n) \leq K(Y \uhr n)+g(n)+K(g(n))$,
as one can enumerate the set $E$ until there are, up to $n$, only
$g(n)$ many places not enumerated and then one can reconstruct
$Z \uhr n$ from $Y \uhr n$ and
$g(n)$ and the last $g(n)$ bits of $Z\uhr n$. As $Z$ is Martin-L\"of random,
$K(Z \uhr n) \geq^+ n$, so
$$K(Y \uhr n) \geq^+ n-g(n)-K(g(n)) \geq^+ n-2g(n).$$
The definitions of $X,f,g$ give $K(X \uhr n) \geq n+3g(n)$.
This shows that, for almost all $n$, $K(X \uhr n)+K(Y \uhr n) \geq 2n$.

However, the set $Y$ is not Martin-L\"of random, because there are infinitely
many $n$ such that $K(Z \uhr n) \linebreak[3] \leq^+ \linebreak[3] n+g(n)/2$.
Now $Y \uhr {n+g(n)}$ can be computed from $Z \uhr n$
and $g(n)$, as one needs only to enumerate $E$ until the $g(n)$ nonelements
of $E$ below $n$ are found,  and they allow to see where the zeroes have to
be inserted into the string $Z \uhr n$ in order to obtain
$Y \uhr {n+g(n)}$. We have $K(g(n)) \leq g(n)/4$
for almost all $n$ and thus $K(Y \uhr {n+g(n)}) \leq^+ n+3/4 \cdot
g(n)$ for infinitely many $n$. So the set  $Y$ is not  Martin-L\"of random,
as required.
\end{proof}

\n \emph{Failing a strong Solovay test implies non-complex initial segments.}
\label{s:sS}
In contrast to the negative results above, we show that failing a stronger type of tests    leads to  considerable dips in the  intitial segment complexity of a measure in the sense of~$C$ (and hence, also in the sense of $K$ by Fact~\ref{fact: compare CK}).  

We say that a Solovay test $\seq
{S_r}\sN r $ is \emph{strong} if each $S_r$ is clopen and there is a recursive
function $g$ such that $g(r)$ is a
strong index for a finite set of strings $X_r$ such that $\Opcl {X_r}=
S_r$. For bit sequences, this means no restriction of the corresponding
randomness notion: in that case any Solovay test $\seq {G_m}$ can be replaced by
a strong Solovay test, listing strings that make up the uniformly $\SI 1$ sets $G_m$
one-by-one. We conjecture that this equivalence of
test notions no longer holds for measures.

The following is a weak version for measures of one implication of
the Miller-Yu theorem~\cite[Theorem 7.1]{Miller.Yu:08}.
We show that if a measure $\mu$ is far from \MLac, then its initial
segment complexity $C(\mu\uhr n | n)$ defined  in Definition \ref{def:ISC}
has infinitely many dips of size $\delta f(n)$ for some positive constant
$\delta$ and a computable function $f$ that grows somewhat fast in
that $\sum_n \tp{-f(n)} < \infty$. We use elements of its  proof
in Bienvenu, Merkle and Shen~\cite{Bienvenu.Merkle.ea:08}. We note that the
result is a variation on (the contrapositive of)
Theorem~\ref{thm:Inseg MLR}(a), proving a stronger conclusion from a
stronger hypothesis. A version of the result for quantum states is \cite[Thm.\ 4.4]{Nies.Scholz:18}.

\begin{proposition} Suppose that $\mu$ fails a strong Solovay test $\seq
{S_r}\sN r $ at level $\delta$, namely $\ex^\infty r \, [ \mu(S_r) \ge
\delta]$. Then there is a computable function $f$ such that $\sum_n
\tp{-f(n)} < \infty$ and
$$\ex^\infty n  \,  [C(\mu\uhr n \mid n) \lep n - \delta f(n)].$$ 
\end{proposition}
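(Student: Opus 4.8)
The plan is to exploit that the test is \emph{strong}: each $S_r=\Opcl{X_r}$ is clopen with a computably given finite generating set, so it may be re-represented at any sufficiently large length. First I would normalise the test. Choose a computable strictly increasing sequence $\seq{\ell_r}\sN r$ with $\ell_r$ larger than $\ell_{r-1}$ and larger than every length occurring in $X_r$, and replace $X_r$ by the set $\widehat X_r$ of all strings of length $\ell_r$ extending some element of $X_r$. This leaves each $S_r=\Opcl{\widehat X_r}$ and the quantity $\lambda(S_r)$ unchanged, so $\sum_r\lambda(S_r)<\infty$ still holds and so does $\ex^\infty r\,[\mu(S_r)\ge\delta]$. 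The point of the normalisation is that $r\mapsto\ell_r$ is injective, so the length $\ell_r$ alone determines the component index $r$.

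Next I would prove the basic compression bound. Since the strings in $\widehat X_r$ all have length $\ell_r$ and $S_r$ is the disjoint union of their cylinders, $|\widehat X_r|=\lambda(S_r)\tp{\ell_r}$. Build a plain machine $N$ that, on auxiliary input $n$ and program $p$, first tests whether $n=\ell_r$ for some $r$ (a terminating search, as $\seq{\ell_r}$ is computable and increasing); if so it computably lists $\widehat X_r$ as $x_0,x_1,\dots$ and outputs $x_p$, reading $p$ as the binary representation of a number (leading zeros allowed, as in the proof of Theorem~\ref{thm:Inseg MLR}). Since there are $|\widehat X_r|=\lambda(S_r)\tp{\ell_r}$ strings to index, every $x\in\widehat X_r$ gets a program of length at most $\ell_r+\log\lambda(S_r)+O(1)$. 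Defining the computable function $f$ by $f(n)=\lfloor-\log\lambda(S_r)\rfloor$ when $n=\ell_r$ for some $r$ with $S_r\neq\ES$, and $f(n)=2\lceil\log(n+1)\rceil$ otherwise, this gives $C(x\mid \ell_r)\lep \ell_r-f(\ell_r)$ for all $x\in\widehat X_r$; note $f(\ell_r)\ge 0$ because $\lambda(S_r)\le 1$.

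Finally I would average and check the two global requirements. Using $C(x\mid n)\lep n$ for the remaining strings, for any $r$ with $n=\ell_r$
\[
C(\mu\uhr n\mid n)=\sum_{x\in\widehat X_r}C(x\mid n)\mu[x]+\sum_{x\notin\widehat X_r}C(x\mid n)\mu[x]
\lep n-\mu(S_r)f(n),
\]
since $\sum_{x\in\widehat X_r}\mu[x]=\mu(S_r)$. For $r$ with $\mu(S_r)\ge\delta$ this yields $C(\mu\uhr{\ell_r}\mid \ell_r)\lep \ell_r-\delta f(\ell_r)$, and as $r\mapsto\ell_r$ is injective and there are infinitely many such $r$, the dip occurs at infinitely many $n$. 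For summability, $\tp{-f(n)}\le 2\lambda(S_r)$ on the lengths $n=\ell_r$ and $\tp{-f(n)}\le (n+1)^{-2}$ elsewhere, so $\sum_n\tp{-f(n)}\le 2\sum_r\lambda(S_r)+\sum_n(n+1)^{-2}<\infty$. The main obstacle, and the reason both the conditioning on $n$ and the distinct representation lengths are essential, is the cost of naming the component $S_r$ from which a string comes: naming $r$ directly would cost about $2\log r$ bits, and $\sum_r\lambda(S_r)\,r^2$ need not converge, which would wreck the summability of $f$. Forcing the lengths $\ell_r$ to be distinct removes this cost entirely, since conditioning on $n=\ell_r$ already supplies $r$ for free.
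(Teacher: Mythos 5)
Your proof is correct and follows essentially the same route as the paper's: normalise the components to consist of strings of a single, strictly increasing length so that the length determines the index $r$, describe each string in a component by its position in the lexicographic listing (costing $n+\log\lambda(S_r)+O(1)$ bits given $n$), set $f(n)\approx-\log\lambda(S_r)$ on those lengths and something summable elsewhere, and average. The only cosmetic difference is your choice of $2\lceil\log(n+1)\rceil$ off the special lengths where the paper uses $f(m)=m$.
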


\begin{proof} Let $\seq{X_r}$ be as in the definition of a strong Solovay test.
We may assume that $\sum \leb S_r \le 1/2$, and all strings in $X_r$
have the same length $n_r$, where $n_r < n_{r+1}$ for each~$r$. We write
$\mu (X_r)$ for $\mu \Opcl{X_r}$. Let $f$ be the computable function
such that
$$2^{-f(n_r)} \ge \mu  (X_r) > 2^{-f(n_r)-1}$$
and $f(m) = m$ for each $m$ that is not of the form $n_r$. There is a
constant $d$ such that each bit string $x$ in the set $X_r$ satisfies
(where $n= n_r$)
\begin{equation}\label{eqn:compr} C(x \mid n)
\le n - f(n) + d=: g(r).\end{equation}
For, $r$ can be computed from $n=n_r$, and each string $x \in X_r$ is
determined by $r$ and its position $i< \tp n \leb (X_r) $ in the
lexicographical listing of $X_r$. We can determine $i$ by $\log
(\tp n \leb (X_r)) \le n - f(n) +d $ bits for some fixed $d$.
In fact we may assume the description has exactly that many bits.
Thus, there is a Turing machine $L$ with two inputs such that
for each $\sss \in X _{r}$, we have $L(v_\sss ; n) = \sss$ for some
bit string $v_\sss$ of length $g(r)$. 

Let $c$ be a constant such that $C(x) \le |x| + c$ for each string
$x$. Now suppose that $\mu (X_r) \ge \delta$. Then for $n= n_r$,
where $\sss$ ranges over strings of length $n$,
\begin{eqnarray*} C(\mu\uhr n \mid n) & \lep & \sum_{\sss \in X_r}
(n-f(n)) \mu[\sss] + \sum_{\sss \not \in X_r} (n+c) \mu[\sss] \\
   & \lep & n - f(n) \sum_{\sss \in X_r} \mu[\sss] \\
   & \lep & n- \delta f(n). \end{eqnarray*}
There are infinitely many such $r$ by hypothesis. This
completes the proof. 
\end{proof}

\section{$K$- and $C$-triviality for measures} \label{s:Ktriv}

\n
Recall from  the third paragraph of Section~\ref{s:insegs growth}
that our choice of universal machines implies
that $C(n) \leq C(x)$ and $K(n) \leq K(x)$ for all strings $x$ of length $n$.

\begin{definition} A measure $\mu$ is called $K$-trivial if $K(\mu
\uhr n) \lep K(n)$ for each $n$;

\n $\mu$ is called
$C$-trivial if $C(\mu \uhr n) \lep C(n)$ for all $n$.\end{definition}

\n Suppose $\mu$ is a Dirac measures $\delta_A$.
Then $\mu$ is $K$-trivial iff $A$  
is $K$-trivial in the usual sense, and $\mu$ is $C$-trivial iff $A$ is
$C$ trivial in the usual sense, which is equivalent to being recursive
by a well-known result of Chaitin~\cite{Chaitin:76} (for a more recent proof see e.g.\ \cite[5.2.20]{Nies:book}). More generally, any finite convex
combination of $K$-trivial Dirac measures is $K$-trivial, and similarly
for $C$. Now if $\alpha,\beta$ are
any positive real numbers such that $\alpha+\beta=1$,
then $\alpha \cdot \delta_\emptyset + \beta \cdot \delta_{\mathbb N}$
is a $C$-trivial and $K$-trivial measure such that any measure representation
in the sense of \cite[1.9.2]{Nies:book}
computes $\alpha$.
  Thus one cannot bound the computational complexity  
of $K$-trivial or of $C$-trivial measures in any way.

For a further example, let $\seq {R_i}$ be a sequence of uniformly recursive sets, and let $\seq {\alpha_i}$ be a sequence of uniformly recursive positive real numbers such that $\sum_i \alpha_i=1$ and $S=\sum_i \alpha_i \log i$ is finite. Let $\mu = \sum_i \alpha_i \delta_{R_i}$. Since $K(R_i\uhr n) \lep K(n) + K(i)$ and $K(i) \lep 2\log i$, for each $n$ we have $K(\mu\uhr n) =  \sum_i \alpha_i K(R_i\uhr n)\lep K(n) + 2S$. Hence $\mu$ is $K$-trivial.  

\smallskip
\n
{\em Atoms and Triviality.} We show that  both $K$-trivial and $C$-trivial
measures are discrete in the sense of \cite{Schnorr:77};  namely,
they are concentrated on the set of their atoms.  
Such measures were further studied by Kautz \cite{Kautz:thesis}, where they were
called ``trivial measures'', and more recently by Porter \cite{Porter:15}.

\begin{proposition}\label{prop:atomos supportos}
If a measure $\mu$ is $K$-trivial, then $\mu$ is
supported by its set of atoms.

\n
In fact the weaker hypothesis $ \ex p \, \ex^\infty n \, [K(\mu \uhr
n) \le K(n)+p]$ suffices for this.  A similar statement holds for $C$.
\end{proposition}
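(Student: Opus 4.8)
The plan is to prove the contrapositive: assuming that $\mu$ is \emph{not} supported by its set of atoms, I will show that $K(\mu\uhr n)-K(n)\to\infty$, which contradicts the weak hypothesis $\ex p\,\ex^\infty n\,[K(\mu\uhr n)\le K(n)+p]$. Write $\mu=\mu_a+\mu_c$ for the decomposition into the atomic part and the continuous (atomless) part, and suppose $\epsilon:=\mu_c(\cantor)>0$; let $\ol\nu:=\mu_c/\epsilon$ be the associated atomless probability measure, and let $\ol\nu_n$ denote the distribution $x\mapsto\ol\nu[x]$ on $\two^{n}$. The first step is a purely algebraic reduction. Since our convention gives $K(x)\ge K(n)$ for all $x$ of length $n$, and since $\mu[x]\ge\mu_c[x]$, writing $K(x)=K(n)+(K(x)-K(n))$ and summing over $|x|=n$ gives
\[ K(\mu\uhr n)=K(n)+\sum_{|x|=n}(K(x)-K(n))\,\mu[x]\ \ge\ K(n)+\sum_{|x|=n}(K(x)-K(n))\,\mu_c[x]. \]
So it suffices to show that the continuous contribution $\sum_{|x|=n}(K(x)-K(n))\,\mu_c[x]$ tends to infinity.

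The core of the argument is an entropy bound. The crucial input is that the universal prefix complexity ``flows'' along the length map $x\mapsto|x|$, giving a constant $c$ with $\sum_{|x|=n}2^{-K(x)}\le c\cdot 2^{-K(n)}$ for all $n$ (equivalently $K(x)=K(n)+K(x\mid n^\ast)+O(1)$ by symmetry of information, where $n^\ast$ is a shortest program for $n$). Consequently the weights $q(x):=c^{-1}2^{\,K(n)-K(x)}$ form a subprobability on $\two^{n}$, and $-\log_2 q(x)=(K(x)-K(n))+\log_2 c$. Applying Gibbs' inequality (expected codelength is at least the Shannon entropy, which remains valid for subprobability weights) to $\ol\nu_n$ yields
\[ \sum_{|x|=n}\ol\nu[x]\,(K(x)-K(n))\ \ge\ H(\ol\nu_n)-\log_2 c,\qquad H(\ol\nu_n):=\sum_{|x|=n}\ol\nu[x]\log_2\tfrac{1}{\ol\nu[x]}. \]
I expect this to be the main obstacle, and it is where a naive approach fails: a Markov/counting argument would note that most $\mu$-mass sits on the at most $2^{K(n)+t+1}$ strings with $K(x)\le K(n)+t$, but this count is polynomial in $n$ while the per-string continuous mass may decay arbitrarily slowly, so no contradiction follows for ``thin'' atomless measures. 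The semimeasure-flow identity is exactly what converts $K(x)-K(n)$ into a genuine codelength and lets the entropy bound bypass this difficulty.

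To finish, I use atomlessness of $\ol\nu$: the maximum $\eta_n:=\max_{|x|=n}\ol\nu[x]$ tends to $0$, since otherwise a path of cylinders of measure bounded away from $0$ would, by compactness of $\cantor$, converge to a point of positive $\ol\nu$-measure, i.e.\ an atom. Because every term satisfies $\ol\nu[x]\le\eta_n$, we get $H(\ol\nu_n)\ge\log_2(1/\eta_n)\to\infty$. Combining the three displays,
\[ K(\mu\uhr n)-K(n)\ \ge\ \epsilon\sum_{|x|=n}\ol\nu[x]\,(K(x)-K(n))\ \ge\ \epsilon\big(H(\ol\nu_n)-\log_2 c\big)\ \longrightarrow\ \infty, \]
contradicting the hypothesis. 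Hence $\epsilon=0$, i.e.\ $\mu$ is supported by its atoms.

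For the statement about $C$ one runs the same scheme, using the plain-complexity counting bound $\#\{x:|x|=n,\ C(x\mid n)<r\}<2^{r}$ in place of Kraft's inequality to obtain $\sum_{|x|=n}\ol\nu[x]\,C(x\mid n)\ge H(\ol\nu_n)-O(1)$, and then comparing $C(\mu\uhr n)$ with $C(n)$ via the plain-complexity analog of the above decomposition; atomlessness again forces $H(\ol\nu_n)\to\infty$, yielding the conclusion.
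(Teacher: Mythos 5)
Your argument for the $K$ case is correct, but it takes a genuinely different route from the paper's. The paper argues by direct counting: there are at most $d=O(2^c)$ strings $x$ of length $n$ with $K(x)\le K(n)+c$ --- note that this bound is \emph{independent of $n$}, unlike the crude count $2^{K(n)+c+1}$ you use to dismiss the counting approach --- and since cylinder masses shrink to $0$ along non-atoms, for large $n$ those $d$ strings carry at most $\epsilon/2$ of the mass; hence at least $\epsilon/2$ of the mass sits on strings with $K(x)>K(n)+c$, giving $K(\mu\uhr n)\ge K(n)+\epsilon c/2$, and letting $c\to\infty$ finishes. Your route instead uses the coding-theorem identity $\sum_{|x|=n}2^{-K(x)}=O(2^{-K(n)})$ together with Gibbs' inequality, and lets atomlessness force $H(\ol\nu_n)\to\infty$. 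The two are close cousins (the length-restricted counting theorem is usually derived from exactly your semimeasure identity), but your version yields the quantitatively sharper bound $K(\mu\uhr n)-K(n)\gep\epsilon H(\ol\nu_n)$, while the paper's is more elementary. So the obstacle you claim for the counting argument is not real, but your alternative for $K$ is valid.

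The one genuine gap is in your one-sentence treatment of the $C$ case. The inequality $\sum_{|x|=n}\ol\nu[x]\,C(x\mid n)\ge H(\ol\nu_n)-O(1)$ is false: from $\#\{x:C(x\mid n)<r\}<2^{r}$ one only gets $\sum_{|x|=n}2^{-C(x\mid n)}=O(n)$, so the Gibbs bound carries a $-\log n$ (or at best $-\log H(\ol\nu_n)$) correction --- one-to-one codes really can undercut the entropy by about $\log H$. Worse, converting $C(x\mid n)$ back into $C(x)-C(n)$ via plain symmetry of information costs another $O(\log n)$, which can swamp $H(\ol\nu_n)$ when the atomless part is thin (e.g.\ $H(\ol\nu_n)$ growing like $\log^{*}n$). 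The repair is to use the length-restricted counting bound for \emph{unconditional} plain complexity: there are at most $O(b^{2}2^{b})$ strings of length $n$ with $C(x)\le C(n)+b$, which is exactly what the paper invokes in Fact~\ref{fa:upgrade}; with that in hand either the paper's counting argument or a suitably weighted version of your Gibbs argument goes through for $C$ with no logarithmic loss.
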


\begin{proof} For a set $R \sub \{0,1\}^*$, by $\Opcl R$ one denotes
the open set $\{Z \colon \, \ex n \, Z\uhr n \in R]\}$.

Assume for a contradiction that $\mu$ gives a measure
greater than $\epsilon >0$ to the set of its non-atoms. Fix $c$
arbitrary with the goal of showing that $K(\mu \uhr n) \ge K(n) +
\epsilon c /2$ for large enough $n$. 

There is a constant $d$ (in fact $d = O(2^c)$) such that for each $n$
there are at most $d$ strings~$x$ of length $n$ with $K(x) \le K(n) + c$
(see e.g.\ \cite[Theorem 2.2.26]{Nies:book}).

Let $S_n = \{x\colon \, |x| = n \lland \mu [x] \le \epsilon  /2d \}$.
By hypothesis we have $\mu \Opcl{S_n} \ge \epsilon$ for large enough
$n$. Therefore by choice of $d$ we have \[\mu \Opcl { S_n \cap \{x
\colon \, K(x) > K( |x|) + c\}} \ge \epsilon/2. \]
Now we can give a lower bound for the $\mu$-average of $K(x)$
over all strings $x$ of length $n$: 
\[ \sum_{|x| =n} K(x) \mu[x] \ge (1- \frac \epsilon 2 K(n) +
\frac \epsilon 2 (K(n) +c) \ge K(n) + \epsilon c/2, \]
as required. Notice that we have only used the weaker hypothesis. 
\end{proof} 

\noindent
Thus, if   $\mu$ is $K$-trivial for constant $p$, then  $\mu $ has the
form $ \sum_{r< N}   \alpha_r \delta_{A_r}$ where $N \le \infty$ and 
each $\alpha_r$ is positive and $\sum_{r< N} \alpha_r =1$. The following 
observation  will show that in addition all the $A_r$ are $K$-trivial. In
particular, $K$-trivial measures are not Martin-L\"of almost continuous.

\begin{fact} Suppose $\mu = \sum_r \alpha_r \delta_{A_r}$ is $K$-trivial
for constant $p$. Then each $A_r$ is $K$-trivial for the constant $p/\alpha_r$.
Similarly, if $\mu$  is $C$-trivial with
constant $p$, then each $A_r$ is $C$-trivial with constant $p/\alpha_r$.
\end{fact}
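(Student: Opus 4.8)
The plan is to prove the contrapositive-style pointwise bound directly from the definition of $K$-triviality for the measure $\mu = \sum_r \alpha_r \delta_{A_r}$. Recall that $K(\mu \uhr n) = \sum_{|x|=n} K(x)\mu[x]$, and since $\mu$ is a convex combination of Dirac measures, for each string length $n$ we have $K(\mu \uhr n) = \sum_r \alpha_r K(A_r \uhr n)$. This follows because $\delta_{A_r}[x] = 1$ exactly when $x = A_r \uhr n$ and is $0$ otherwise, so the $\mu$-average collapses to the weighted sum of the individual initial segment complexities $K(A_r \uhr n)$.

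\medskip
\noindent
First I would fix a single index $s$ and isolate its contribution. Using our standing convention that $K(n) \le K(x)$ for every string $x$ of length $n$ (established in the third paragraph of Section~\ref{s:insegs growth}), every term $\alpha_r K(A_r \uhr n)$ in the sum is at least $\alpha_r K(n)$. Therefore
\[ \alpha_s K(A_s \uhr n) = K(\mu \uhr n) - \sum_{r \neq s} \alpha_r K(A_r \uhr n) \le K(\mu \uhr n) - \sum_{r \neq s} \alpha_r K(n). \]
Now I would use the $K$-triviality hypothesis $K(\mu \uhr n) \le K(n) + p$ together with $\sum_{r\neq s}\alpha_r = 1 - \alpha_s$ to bound the right-hand side by $K(n) + p - (1-\alpha_s)K(n) = \alpha_s K(n) + p$. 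Dividing through by $\alpha_s$ yields $K(A_s \uhr n) \le K(n) + p/\alpha_s$, which is precisely the claim that $A_s$ is $K$-trivial with constant $p/\alpha_s$.

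\medskip
\noindent
The argument for $C$ is identical, replacing $K$ by $C$ throughout and invoking the companion convention $C(n) \le C(x)$. The main (and really the only) subtle point is the lower bound $K(n) \le K(A_r \uhr n)$ on the ``other'' terms: this is exactly where the special choice of universal machine from Section~\ref{s:insegs growth} earns its keep, since without it one would only have $K(A_r \uhr n) \ge K(n) - O(1)$ and the additive constants would proliferate across the infinitely many summands, spoiling a clean bound. Because the convention gives the inequality with no additive slack, the estimate on $\sum_{r \neq s}$ is exact and the stated constant $p/\alpha_r$ emerges cleanly. I do not anticipate any genuine obstacle; the only thing to watch is that $p$ here is an additive constant valid for all $n$ (rather than the $\lep$ with a hidden constant), which is why the proposition is stated for $K$-triviality ``for constant $p$'' rather than merely $K(\mu \uhr n) \lep K(n)$.
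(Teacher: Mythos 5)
Your argument is correct and is essentially the paper's own proof: both isolate the single term $\alpha_s K(A_s \uhr n)$ from the decomposition $K(\mu\uhr n)=\sum_r \alpha_r K(A_r\uhr n)$, bound the remaining terms below by $\alpha_r K(n)$ using the convention $K(n)\le K(x)$, and divide by $\alpha_s$. The only difference is cosmetic (you subtract the other terms from the hypothesis bound, the paper bounds the whole sum from below), so there is nothing further to add.
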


\begin{proof} We have 
\[ K(n) +p  \ge K(\mu \uhr n) = \sum_s \alpha_s K(A_s\uhr n) \ge 
   K(n) + \alpha_r(K(A_r \uhr n)-K(n)). \]
Therefore, $K(A_r \uhr n) - K(n) \le p/\alpha_r$, as required.
The proof for $C$ is obtained via replacing $K$ by $C$ everywhere.
\end{proof}
\n
Above we built a  computable $K$-trivial measure with
infinitely many atoms.
%
%
On the other hand, the following example shows that not every 
infinite convex combination $\mu = \sum_k \alpha_k \delta_{A_k}$ of
$K$-trivial Dirac measures for constants $b_k$ yields a
$K$-trivial measure, even if there is a finite bound on the values
$\alpha_k  b_k$. Let $A_k = \{\ell: \ell \in \Omega \wedge \ell < k\}$ and
$\alpha_k = (k+1)^{-1/2}-(k+2)^{-1/2}$. All sets $A_k$ are finite and
thus $K$-trivial for constant $2k+ O(1)$. Furthermore, the
sum of all $\alpha_k$ is $1$ and $\alpha_k = O(1/k)$.
We have 
\[K(\mu\uhr n) = \sum_{|x|=n} K(x) \mu(x)   
     \geq (\sum_{m \geq n} \alpha_m) \cdot K(\Omega\upharpoonright n)
     \geq (n+2)^{-1/2} \cdot (n+2) = \sqrt{n+2} \]
for almost all $n$, and thus the average grows faster than $K(n)+c$.
So the measure is not $K$-trivial.

In a sense, an 
atomless measure can be  arbitrarily close to
being $C$-trivial and $K$-trivial. 

\begin{proposition} 
For each nondecreasing unbounded function $f$ which is recursively
approximable from above there is a non-atomic measure $\mu$ such that
$C(\mu \uhr n) \leq^+ C(n)+f(n)$ and $K(\mu \uhr n) \leq^+ K(n)+f(n)$.
\end{proposition}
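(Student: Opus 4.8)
The plan is to realize $\mu$ as a measure concentrated on a ``slowly branching'' tree, where branching occurs only at the positions of a set $A\sub\NN$ with $|A\cap[0,n)|\lep f(n)$, and where at each branch position the mass of a cylinder splits evenly between its two one-bit extensions while at every other position it is pushed entirely onto the extension by $0$. Then the strings of length $n$ of positive $\mu$-measure are exactly those that are $0$ off $A\cap[0,n)$ and arbitrary on $A\cap[0,n)$; there are $2^{|A\cap[0,n)|}$ of them, each of measure $2^{-|A\cap[0,n)|}$. If $A$ is infinite then every path meets infinitely many branch positions, so $\mu[Z\uhr n]\to 0$ for each $Z$ and $\mu$ is non-atomic, which is the first requirement.

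For the complexity bounds I would reuse the deficiency-set reconstruction already employed in this paper. Fix $n$ and write $k=|A\cap[0,n)|$. If $A$ is co-r.e., say $A=\NN\setminus E$ with $E$ r.e., then from $n$ together with the number $k$ one can compute $A\cap[0,n)$: enumerate $E$ until exactly $k$ positions below $n$ remain outside $E$; these are precisely the branch positions, since $|A\cap[0,n)|=k$. Reading off $k$ further ``choice bits'' and placing $0$'s elsewhere recovers any positive-measure string $x$. Crucially, the length of the choice-bit block equals $k$, so $k$ need not be encoded separately: given $n$, a machine whose input is exactly the choice word recovers $x$, giving $C(x\mid n)\lep |A\cap[0,n)|\lep f(n)$. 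Prefixing a description of $n$ then yields $K(x)\lep K(n)+f(n)$ and analogously $C(x)\lep C(n)+f(n)$, and averaging over $x$ gives the two stated inequalities. The $K$-bound is cleanest via Kraft--Chaitin: issue, at each stage $s$ and each length $n$, a request of length $K_s(n)+f_s(n)$ for every current positive-measure string of length $n$; since there are at most $2^{f_s(n)}$ such strings, the weight added at that stage and length is at most $2^{f_s(n)}\cdot 2^{-K_s(n)-f_s(n)}=2^{-K_s(n)}$, and as $K_s(n)\downarrow K(n)$ these telescope to $O\big(\sum_n 2^{-K(n)}\big)=O(\Om)<\infty$, so the requests are realizable.

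Everything therefore reduces to constructing a co-r.e. branch set $A$ that is infinite yet satisfies $|A\cap[0,n)|\lep f(n)$. First I would try the naive sparsification: start with $A=\NN$ and, running the approximation, whenever $|A\cap[0,n)|>f_s(n)$ delete (enumerate into $E$) the largest branch position below $n$. This keeps $A$ co-r.e. and forces $|A\cap[0,n)|\le f(n)$ in the limit, and since deletions always remove the largest offender the small positions are protected, so $A$ should remain infinite because the ``budget'' $f(n)$ tends to infinity.

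The hard part will be exactly this last step, and it is where the hypothesis bites. Because $f$ is only approximable from above, for a slowly growing $f$ one cannot choose the branch positions effectively at all: a chain-rule argument together with the dual Shannon inequality $\sum_{|x|=n}\mu[x]\,K(x\mid n)\gep H_n$ (where $H_n$ is the Shannon entropy of $\mu$ at level $n$) gives $K(\mu\uhr n)\gep K(n)+H_n$, and combined with the target bound this forces $H_n\lep f(n)$. For a computable non-atomic $\mu$ the entropy $H_n$ is a computable unbounded function, while no computable unbounded function can be $\lep f(n)$ once the level sets $\{n:f(n)\ge j\}$ form a hyperimmune set (a genuine possibility for $f$ approximable from above, e.g.\ $f(n)=|A_0\cap[0,n)|$ for the complement $A_0$ of a maximal set). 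Hence $\mu$ must be non-computable and $A$ non-recursive, so the sparsification really has to be run in the limit, and the delicate verification is that the surviving set stays infinite while meeting the density bound under the decreasing approximation. A secondary technical point is the plain-complexity bookkeeping: recovering $f(n)$ from the description length is what keeps the $C$-bound at $C(n)+f(n)$ rather than $C(n)+f(n)+O(\log f(n))$, and I would arrange the counting for $C$ so as not to incur the extra logarithmic term.
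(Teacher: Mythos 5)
Your architecture is essentially the paper's: a measure spread uniformly over a tree that branches exactly at the positions of an infinite co-r.e.\ set whose census below $n$ is controlled by $f(n)$, with each positive-measure string recovered from $n$ together with its choice bits at the branch positions. The part you flag as the ``hard part'' --- building the sparse infinite co-r.e.\ branch set under a decreasing approximation of $f$ --- is actually the routine part: a standard movable-marker (Dekker deficiency-set style) argument, of the kind the paper uses elsewhere, produces an infinite co-r.e.\ set $A$ with $|A\cap[0,n)|\lep g(n)$ for any nondecreasing unbounded $g$ approximable from above. Your observation that $\mu$ must be non-computable for suitably slow $f$ is correct but affects neither construction, since the branch set is only required to be co-r.e.

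The genuine gap is in the complexity bounds, exactly where you allot the full budget $k\le f(n)$ to branch positions and then try to describe the $k$ choice bits with zero overhead beyond a description of $n$. For $K$, the machine that reads a shortest program for $n$, treats the remaining $k$ bits as the choice word, and enumerates $E$ until exactly $k$ positions below $n$ survive does \emph{not} have a prefix-free domain: the count of surviving positions passes through every intermediate value before reaching its limit $k_n$, so the decoder halts on $n^{*}w'$ for every length $|w'|$ between $k_n$ and the initial count, and $n^{*}w\prec n^{*}w'$ both lie in the domain. For $C$, ``prefixing a description of $n$'' leaves the machine unable to parse where that description ends, since it knows neither $C(n)$ nor $k$. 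Your Kraft--Chaitin fallback does not repair this: requests must be re-issued each time $f_s(n)$ drops (the surviving strings then need strictly shorter descriptions), each such drop adds weight about $2^{-K_s(n)}$ with $K_s(n)$ unchanged, and the resulting bound for level $n$ is of order $(f_0(n)-f(n))\cdot 2^{-K(n)}$, which need not be summable over $n$; the telescoping you invoke only accounts for drops of $K_s(n)$. The paper's fix is to allow only about $f(n)/2$ branch positions below $n$, so that a \emph{self-delimiting} encoding of the $m$ choice bits costs $2m\le f(n)$: for $K$ this code is appended to $n^{*}$, and for $C$ the self-delimited choice word is placed first and a plain description of $n$ last, making the input parseable. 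Applying your construction to $\lfloor f/2\rfloor$ (again nondecreasing, unbounded, approximable from above) and encoding the choice word self-delimitingly closes the gap.
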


\begin{proof}
The proof for $C$ and $K$ is essentially the same; we present  
the one for $K$ first.

There is a recursively enumerable
set $A$ such that, for all $n$, $A\cap \{0, \ldots, n\}$ has up to a
constant $f(n)/2$ non-elements. One lets $\mu$ be the measure such that
$\mu(x) = 2^{-m}$ in the case that all ones in $x$ are not in $A$
and $\mu(x) = 0$ otherwise, where $m$ is the number of non-elements
of $A$ below $|x|$. One can see that when $\mu(x) = 2^{-m}$
then $x$ can be computed from
$|x|$ and the string $b_0 b_1 \ldots b_{m-1}$ which describes the
bits at the non-elements of $A$. Thus 
\[K(x) \leq^+ K(|x|)+K(b_0 b_1 \ldots b_{m-1}) \leq^+ K(|x|)+2m.\]
It follows that $K(\mu \uhr n) \leq^+ K(n)+f(n)$,
as the $\mu$-average of strings $x \in \{0,1\}^n$
with $K(x) \leq^+ K(n)+f(n)$ is at most $K(n)+f(n)$ plus a constant.

Now the bound for $C$ follows along the same lines;
note that for $C$ one gives a prefix-free coding of the string
$b_0 b_1 \ldots b_{m-1}$ above followed by a $C$-description for $|x|$
plus a constant-length coding prefix -- assuming that the universal machine is
universal by adjunction. Again the length of the code is $C(n)+f(n)$
plus a constant.
\end{proof}

 \smallskip
\n
{\em  Closure properties of $C$- and of  $K$-triviality.}
In the following suppose $\Gamma$ is a total Turing reduction,
that is, a truth-table reduction, mapping  $\cantor$ to $\cantor$.
Given a measure~$\nu$, let $\mu=\Gamma(\nu)$ denote the image measure,
defined as usual by $\mu({\mathcal A}) =\nu(\Gamma^{-1}({\mathcal A}))$,
where $\mathcal A$ is a subset of $\cantor$.
As $\Gamma$ is a truth-table reduction, some  strictly increasing
computable function $f$   bounds the use of $\Gamma$.

\begin{proposition} \label{prop:gamma}
Let $\mu=\Gamma(\nu)$ as above.
We have $K(\mu\uhr n) \le^+ K(\nu \uhr{f(n)})$ and similarly for $C$. 
In particular, if $\nu$ is $K$-trivial then so is $\mu$, and if
$\nu$ is $C$-trivial so is $\mu$. \end{proposition}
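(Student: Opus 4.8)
The plan is to reduce everything to a single clean identity that rewrites the average complexity of $\mu$ at level $n$ as a $\nu$-average over strings of length $f(n)$. First I would record the finite combinatorial data underlying the truth-table reduction: since the use of $\Gamma$ is bounded by the strictly increasing computable $f$, for every $X$ the string $\Gamma(X)\uhr n$ depends only on $X\uhr{f(n)}$. This yields a uniformly computable family of maps $\gamma_n \colon \two^{f(n)} \to \two^n$ with $\Gamma(X)\uhr n = \gamma_n(X\uhr{f(n)})$ for all $X$. Consequently the image measure satisfies $\mu[y] = \sum\{\nu[w] : |w| = f(n),\ \gamma_n(w) = y\}$ for each $y$ of length $n$.

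Next I would use this to interchange the order of summation in the definition of $K(\mu\uhr n)$. Grouping the strings $y$ of length $n$ according to the strings $w$ of length $f(n)$ mapping onto them gives the identity
\[
K(\mu\uhr n) \;=\; \sum_{|y|=n} K(y)\,\mu[y] \;=\; \sum_{|w|=f(n)} K(\gamma_n(w))\,\nu[w].
\]
The key inequality is then $K(\gamma_n(w)) \lep K(w)$, with a constant independent of both $n$ and $w$. This holds because $n$ is recoverable from $|w| = f(n)$ (search for the unique $n$ with $f(n)=|w|$, using that $f$ is strictly increasing and computable), after which $\gamma_n(w)$ is computed from $w$; so a fixed machine post-processing the universal one converts any description of $w$ into a description of $\gamma_n(w)$.

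Finally I would substitute this in and absorb the additive constant using that $\nu$ is a probability measure: writing $K(\gamma_n(w)) \le K(w) + c$ and summing against $\nu[w]$ gives $K(\mu\uhr n) \le K(\nu\uhr{f(n)}) + c\sum_{|w|=f(n)}\nu[w] = K(\nu\uhr{f(n)}) + c$, which is exactly $K(\mu\uhr n) \lep K(\nu\uhr{f(n)})$. The argument for $C$ is identical, since it uses only that $\gamma_n$ is a computable map with $n$ decodable from the input length. For the ``in particular'' claim, $K$-triviality of $\nu$ gives $K(\nu\uhr{f(n)}) \lep K(f(n)) \lep K(n)$, the last step because $f$ is computable; hence $\mu$ is $K$-trivial, and likewise for $C$.

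The main obstacle is really just making the uniformity precise: constructing the family $\gamma_n$ together with the interchange-of-summation identity, and checking that the constant in $K(\gamma_n(w)) \lep K(w)$ does not depend on $n$ — this is what allows the single constant $c$ to survive the $\nu$-weighted average. Everything after that is bookkeeping.
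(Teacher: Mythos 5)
Your proof is correct and follows essentially the same route as the paper's: rewrite $K(\mu\uhr n)$ as a $\nu$-average over strings $w$ of length $f(n)$ of $K(\Gamma^w\uhr n)$, bound this by $K(w)$ plus a uniform constant (using that $n$ is recoverable from $f(n)$), and then use $K(f(n))\lep K(n)$ for the triviality claim. Your write-up is somewhat more explicit about the uniformity of the constant, but the decomposition and the key inequality are identical to the paper's.
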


\begin{proof} Let $x$ range over strings of length $n$ and $y$ over
strings of length $f(n)$. Since $K(\Gamma^y) \le K(y)$ we have
\begin{eqnarray*} K(\mu\uhr n) &=& \sum_{x} K(x) \sum_{
\Gamma^y\succeq x} \nu[y]
\ = \  \sum_{x} \sum_{ \Gamma^y\succeq x} K(x) \nu[y] \\
&\le^+ & \sum_{x} \sum_{\Gamma^y\succeq x} K(y) \nu[y] 
\ = \  K(\nu \uhr{f(n)}). \end{eqnarray*}
Using that $n$ and $f(n)$ have, up to a constant, the same
descriptive     complexity in the sense of $K$, this completes the
proof. The proof for
$C$ is similar to   the proof for~$K$ with only the obvious changes.
\end{proof}

\n
Note that $\cantor \times \cantor$ is effectively isomorphic to
$\cantor$ via the map $A,B \to A\oplus B$. So a product of two
measures on $\cantor$ can itself be viewed as a measure on $\cantor$.
We show that $K$- and $C$-triviality of measures is closed under taking products,
generalising  well-known facts for bit sequences. The following fact will be needed for the case of~$C$. 
\begin{fact} \label{fa:upgrade}
$K( x \mid \, n, C(n)) \lep 2(C(x)-C(n))$ for each $n$ and each $  x\in \{0,1\}^n$. \end{fact}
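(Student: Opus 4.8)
The plan is to prove $K(x \mid n, C(n)) \lep 2(C(x) - C(n))$ by exhibiting a prefix-free machine that, given the oracle information $(n, C(n))$, can recover $x$ from a code whose length is roughly $2(C(x) - C(n))$. The natural starting point is a shortest plain description: let $p$ be a $C$-minimal program for $x$, so $|p| = C(x)$ and $U(p) = x$. The difficulty in passing from plain to prefix-free complexity is always that plain descriptions are not self-delimiting, so one must spend extra bits to encode the length of $p$; the trick here is that knowing $C(n)$ lets us encode only the \emph{excess} length $|p| - C(n) = C(x) - C(n)$ rather than the full length $|p|$, and it is this saving that produces the factor $2$ (rather than $2\log|p|$ or worse) on the right-hand side.

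Concretely, I would first observe that since $|x| = n$ and we are given $n$ as a conditioning parameter, we also know (from our convention that $C(n) \le C(x)$, established in the third paragraph of Section~\ref{s:insegs growth}) that $C(x) \ge C(n)$, so the quantity $k := C(x) - C(n) \ge 0$ is well-defined and nonnegative. The code for the prefix-free machine will consist of two parts: a self-delimiting encoding of the number $k$ using roughly $2\log k$ bits (for instance by doubling each bit of the binary representation of $k$ and appending a terminator), followed by the bits of the plain program $p$ itself. The machine, given auxiliary input $(n, C(n))$, reads and decodes $k$ from the prefix-free prefix, then knows that $p$ has length exactly $C(n) + k$, reads precisely that many subsequent bits to recover $p$, and finally runs the universal plain machine $U$ on $p$ to output $x = U(p)$.

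The length accounting is then straightforward: the total code length is at most $2\log k + O(1) + |p| = 2\log(C(x)-C(n)) + O(1) + C(x)$. This is visibly \emph{not} yet bounded by $2(C(x) - C(n))$, so the key step is to handle the dominant term $C(x)$, which at first sight is far too large. The resolution is that $|p|$ should be measured relative to $C(n)$ as well: since the machine is given $C(n)$, it can first lay down a known offset of $C(n)$ and only needs the remaining $k = C(x)-C(n)$ bits of $p$ supplied explicitly, provided we arrange the universal machine or the encoding so that the initial segment of $p$ of length $C(n)$ is itself reconstructible from the conditioning information. The honest version of this idea is that we should not encode all of $p$ but rather combine a description of $p$'s \emph{deviation} from a canonical $C(n)$-length object; the standard way this is carried out (see \cite[2.2.5]{Nies:book}) yields exactly the factor $2$.

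The hard part will be pinning down this last encoding so that the $C(x)$ term genuinely collapses to $2(C(x)-C(n))$ rather than surviving additively. I expect the cleanest route is to mimic the known inequality $n - C(x) \lep 2(n + K(n) - K(x))$ recorded just before Fact~\ref{fact: compare CK}, which already packages the $C$-to-$K$ conversion with precisely the factor $2$; the present Fact~\ref{fa:upgrade} is a conditional, ``upgraded'' analogue of that same computation, with $n$ playing the role of the upper reference point and $C(n)$ supplied as a conditioning oracle to absorb the length-encoding overhead. Once the counting is set up so that only the $k = C(x) - C(n)$ many ``surplus'' bits need to be specified self-delimitingly and each such bit is charged at most twice, the bound $K(x \mid n, C(n)) \lep 2(C(x) - C(n))$ follows, and the argument for why no $\log$ term remains is exactly the place where the conditioning on $C(n)$ does its work.
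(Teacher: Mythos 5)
There is a genuine gap, and it sits exactly where you flag it: the $C(x)$ term does not ``collapse.'' Your plan is to transmit a shortest plain program $p$ for $x$ (cost $|p|=C(x)$ bits) plus a self-delimiting header, and then hope that the first $C(n)$ bits of $p$ can be reconstructed from the conditioning data $(n,C(n))$. They cannot: a shortest program for $x$ is an arbitrary string of length $C(x)$, and knowing $n$ and $C(n)$ gives you no handle on any of its bits. So any scheme that actually ships $p$ (or any program for $x$) is stuck at cost $\ge C(x)\ge C(n)$, which is in general vastly larger than $2(C(x)-C(n))$. The appeal to the inequality $n-C(x)\lep 2(n+K(n)-K(x))$ does not rescue this, because that inequality goes in the opposite direction ($C$ bounded by $K$) and is itself proved by a counting argument, not by splicing programs.

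The missing idea is to describe $x$ not by a program but by an \emph{index into a small, enumerable set}. Writing $b=C(x)-C(n)$, a Chaitin-style counting lemma (used in \cite[Proof of Lemma 5.2.21]{Nies:book}) shows that for the plain machine $M$ with $M(p)=0^{|U(p)|}$ there are at most $O(b^2\cdot 2^b)$ programs $p$ with $M(p)=n$ and $|p|\le C(n)+b$; since every $x\in\{0,1\}^n$ with $C(x)\le C(n)+b$ equals $U(p)$ for such a $p$, there are at most $O(b^2\cdot 2^b)$ such strings $x$. Given $n$, $C(n)$ and $b$ this set is uniformly recursively enumerable, so a prefix-free machine with auxiliary input $(n,C(n))$ can recover $x$ from a self-delimiting code for $b$ (about $2\log b$ bits) followed by the rank of $x$ in the enumeration (about $b+2\log b$ bits), for a total of $2b+O(1)$. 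That counting step is the entire content of the fact, and it is absent from your proposal; without it the factor $2$ has no source and the bound cannot be reached.
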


\begin{proof}
One suffices  to show that there is a constant $d$ such that for each $b$, 
$$
     C(x) \leq C(n)+b \rightarrow
   K(x|n,C(n)) \leq 2b+d.
$$
Following an argument of Chaitin \cite{Chaitin:76},
Nies \cite[Proof of Lemma 5.2.21]{Nies:book} showed that
for every plain machine $M$ there are at most $O(b^2 \cdot 2^b)$ descriptions
$p$ with $M(p) = n$ and $|p| \leq C(n)+b$. Consider the machine $M$ such that  $M(p)=0^{|U(p)|}$ where $U$ is the universal plain machine.
 There are at most $O(b^2 \cdot 2^b)$ many
$x \in \{0,1\}^n$ with $C(x) \leq C(n)+b$, because any such  $x$ has the form  $U(p)$
for some  $p$ with $M(p)=n$ and $|p| \leq C(n)+b$. 
Uniformly in $b$,  $n$ and $C(n)$,  the set of such $x$ is recursively enumerable. Hence  one can provide a  prefix-free
code of length up to $2b$ plus constant
for any pair $\langle b,a\rangle$ with $a \in O(b^2 \cdot 2^b)$
selecting the $a$-th such $x$. This shows that   a constant $d$  as above exists. \end{proof}

\begin{theorem} \label{pr:sixfive}
Let $\mu$ and $\nu$ be measures.\\
(a) If $\mu$ and $\nu$ are $K$-trivial then $\mu \times \nu$ is $K$-trivial.\\
(b) If $\mu$ and $\nu$ are $C$-trivial then $\mu \times \nu$ is $C$-trivial.
\end{theorem}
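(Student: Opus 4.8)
The plan is to prove both parts by estimating the initial segment complexity of the product measure $\mu \times \nu$ at length $n$ in terms of the complexities of $\mu$ and $\nu$ at related lengths. Recall that under the effective isomorphism $A, B \mapsto A \oplus B$, a string of length $n$ for $\mu \times \nu$ is obtained by interleaving a string of length roughly $\lceil n/2 \rceil$ from $\mu$ with one of length $\lfloor n/2 \rfloor$ from $\nu$. So the natural first step is to relate $(\mu \times \nu)\uhr n$ to $\mu \uhr{\lceil n/2\rceil}$ and $\nu\uhr{\lfloor n/2\rfloor}$.

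For part (a), the key inequality I would aim for is the subadditivity
\[ K((\mu\times\nu)\uhr n) \lep K(\mu\uhr{\lceil n/2\rceil}) + K(\nu\uhr{\lfloor n/2\rfloor}) \]
(up to the additive overhead of combining two self-delimiting descriptions, which is where prefix-freeness is used). This should follow by writing $K((\mu\times\nu)\uhr n)$ as the $(\mu\times\nu)$-average of $K(x\oplus y)$ over pairs $(x,y)$ and using the standard bound $K(x\oplus y)\lep K(x)+K(y)$ for strings, then taking averages and factoring using the product structure $(\mu\times\nu)[x\oplus y] = \mu[x]\cdot\nu[y]$. Once this is established, $K$-triviality of $\mu$ and $\nu$ gives $K(\mu\uhr{\lceil n/2\rceil}) \lep K(\lceil n/2\rceil) \lep K(n)$ and similarly for $\nu$, so that $K((\mu\times\nu)\uhr n) \lep K(n)$, which is exactly $K$-triviality of the product. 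Here I am using that $\lceil n/2\rceil$ and $n$ have the same $K$-complexity up to a constant.

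For part (b) the same averaging scheme applies, but the obstacle is that plain complexity $C$ is not subadditive in the clean prefix-free way: one cannot simply concatenate two $C$-descriptions and recover the split point. This is precisely what Fact~\ref{fa:upgrade} is designed to circumvent. The plan is to first pass from $C$-descriptions to $K$-descriptions conditioned on the length and its complexity: by Fact~\ref{fa:upgrade}, $C$-triviality of $\mu$ (which gives $C(\mu\uhr m)\lep C(m)$, i.e. the average deviation $C(x)-C(m)$ is bounded) upgrades to control of $K(x\mid m, C(m))$ on average. Conditioning on $m$ and $C(m)$ restores enough self-delimitation to combine the two halves additively as in part (a), and then one converts back to a $C$-bound for the product using $C(x)\lep K(x)$ together with the fact that $C(n)$ and $C(\lceil n/2\rceil)$ agree up to a constant. \textbf{I expect the main difficulty to be exactly this $C$-to-$K$-to-$C$ passage}: one must be careful that all the conditional information ($n$, $C(n)$, and the analogous data for the two halves) is computable from the length $n$ alone up to a constant, so that the final bound is genuinely of the form $C((\mu\times\nu)\uhr n)\lep C(n)$ with no leftover dependence; the averaging over $\mu\times\nu$ must be carried out after the upgrade, using linearity of the average and the finiteness of the bounded deviations.
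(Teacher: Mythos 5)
Your overall strategy---average a string-level inequality for $K(x\oplus y)$ over the product measure---points in the right direction, but the specific inequality you chose in part (a) is too weak, and the loss is fatal. From $K(x\oplus y)\lep K(x)+K(y)$ you only get
\[ K((\mu\times\nu)\uhr {2n}) \;\lep\; K(\mu\uhr n)+K(\nu\uhr n)\;\lep\; K(n)+K(n), \]
and $2K(n)$ is not $K(n)+O(1)$, since $K(n)\to\infty$. So your displayed conclusion ``$K((\mu\times\nu)\uhr n)\lep K(n)$'' does not follow: you have paid for a description of the length twice. The repair, which is what the paper's proof does, is to pay for $n$ only once. Fix a shortest prefix-free description $p$ of ($a$ code for) $n$ and use
\[ K(x\oplus y)\;\lep\; |p| + K(x\mid p)+K(y\mid p) \;=\; K(n)+K(x\mid p)+K(y\mid p), \]
together with symmetry of information $K(x)=^+K(n)+K(x\mid p)$ for $|x|=n$; then $K$-triviality of $\mu$ says exactly that the $\mu$-average of $K(x\mid p)$ is bounded by a constant, and similarly for $\nu$, so averaging over $\mu\times\nu$ (which factorizes, as you note) gives $K((\mu\times\nu)\uhr{2n})\lep K(n)$. (The paper first reduces to the atomic case via Proposition~\ref{prop:atomos supportos}; that step is a convenience, and your direct averaging is fine once the inequality is corrected.)

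The same double-payment issue threatens your part (b). If, after upgrading via Fact~\ref{fa:upgrade}, you convert each half back to a plain description separately and add, you again land at $2C(n)$. The correct assembly is a single plain description of $n$ of length $C(n)$ (from which both $n$ and $C(n)$ are recoverable), followed by the two prefix-free \emph{conditional} descriptions of the halves given $(n,C(n))$, whose lengths Fact~\ref{fa:upgrade} bounds by $2(C(x)-C(n))+O(1)$ and $2(C(y)-C(n))+O(1)$; these deviations have bounded $\mu$- and $\nu$-averages precisely by $C$-triviality. You gesture at this (``conditioning restores enough self-delimitation''), but the essential point---that the unbounded quantity $C(n)$ is charged exactly once, while everything charged twice has bounded average---is never stated, and it is the whole content of the argument. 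With that made explicit, part (b) goes through as in the paper.
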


\begin{proof}
We assume in both cases (a) and (b) that $\mu$ and $\nu$ are
trivial with respect to the corresponding descriptive string complexity and then show
that $\mu \times \nu$ is trivial for the same complexity notion.

For (a), by Prop~\ref{prop:atomos supportos} one may assume that
$\mu = \sum_i \alpha_i A_i$ and $\nu = \sum_j \beta_j B_j$ with
$1 = \sum_i \alpha_i$ and $1 = \sum_j \beta_j$, where the $A_i$ and $B_j$
are $K$-trivial sets. For a string $x$ of length~$n$ and
a shortest description $p$ of (a code for) $n$, $K(x) =^+ K(x|p)+|p|$;
this follows easily from~\cite[Theorem 3.9.1]{Li.Vitanyi:book}.
Note that
$$
   K(A_i \oplus B_j \uhr {2n}) \leq^+
   K(n) + K(A_i \uhr n | p)+K(B_j \uhr n | p).
$$
Since $\mu$ and $\nu$ are $K$-trivial we have
$K(A_i\uhr n)=^+ K(n)+K(A_i \uhr n|p)$
and $K(B_j\uhr n)=^+ K(n)+K(B_j \uhr n|p)$, so
 $\sum_i \alpha_i K(A_i \uhr n|p)$ and $\sum_j \beta_j K(B_j \uhr n|p)$
are both bounded by some constant independent of $n$.
Let $c$ be a common upper bound of such constants. Now, for some constant $d$,
\begin{eqnarray*}
   K(\mu \times \nu \uhr {2n})
    & = & \sum_{i,j} \alpha_i \cdot \beta_j \cdot K(A_i \oplus B_j \uhr {2n}) \\
    & \leq & \sum_{i,j} \alpha_i \cdot \beta_j \cdot
              (K(n) + K(A_i \uhr n | p)+K(B_j \uhr n | p)+d) \\
    & = & K(n)+d + (\sum_i \alpha_i \cdot K(A_i \uhr n|p) \cdot
         (\sum_j \beta_j)\\
    & & \mbox{ } \ \ \ \ \ \ \ \ + \ (\sum_i \alpha_i) \cdot
                         (\sum_j \beta_j \cdot K(B_j \uhr n|p) \\
    & \leq & K(n)+2c+d.
\end{eqnarray*}
Since $K(n) =^+ K(2n)$, one can conclude that
$
   \forall n\,[K(\mu \times \nu \uhr {2n}) \leq^+ K(2n)].
$
The same can be proven for $2n+1$ with only a slight more notational
complexity. Thus $\mu \times \nu$ is $K$-trivial.


For (b), as in (a), assume that
$\mu = \sum_i \alpha_i A_i$ and $\nu = \sum_j \beta_j B_j$ with
$1 = \sum_i \alpha_i$ and $1 = \sum_j \beta_j$,
where now $A_i,B_j$ are $C$-trivial sets.
Let $c$ be the supremum over $n$ of all $\sum_i \alpha_i \cdot (C(A_i \uhr n)-C(n))$
and $\sum_j \beta_j \cdot (C(B_j \uhr n)-C(n))$. Note that $c<\infty$
because  $\mu$ and $\nu$ are  $C$-trivial.

One can produce  descriptions for $A_i \uhr n$ and $B_j \uhr n$
from prefix-free conditional descriptions for $A_i \uhr n$ and $B_j \uhr n$,
given $n$ and $C(n)$ together with  a coding of these two parameters $n$ and $C(n)$.
Note that $n,C(n)$ can be given by a shortest
$C$-description of $n$ in one go, which needs $C(n)$ bits. With the
other two prefix-free conditional descriptions appended, one obtains for   a suitable   constant
$d'$ that 
\begin{center} $
   C(A_i \oplus B_j \uhr {2n}) \leq C(n)+K(A_i \uhr n|n,C(n))+
     K(B_j \uhr n|n,C(n))+ d'.
$\end{center}
Now one uses this to show the following as in (a):
\begin{eqnarray*}
    C(\mu \times \nu \uhr {2n})     & = & \sum_{i,j} \alpha_i \cdot \beta_j \cdot C(A_i \oplus B_j \uhr {2n}) \\
    & \leq & \sum_{i,j} \alpha_i \cdot \beta_j \cdot
              (C(n) + K(A_i \uhr n | n,C(n))+K(B_j \uhr n | n,C(n))+d') \\
    & \leq & \sum_{i,j} \alpha_i \cdot \beta_j \cdot (C(n)\!+\!2\!\cdot\!
      ((C(A_i \uhr n)\!-\!C(n))\!+\!(C(B_j \uhr n)\!-\!C(n)))\!+\!2d\!+\!d')\\
    & = & C(n)+2d+d' + 2 \cdot (\sum_i \alpha_i \cdot (C(A_i \uhr n)-C(n)))
          \cdot (\sum_j \beta_j)\\
    & & \mbox{ } \ \ \ \ \ \ \ \ + \ 2 \cdot (\sum_i \alpha_i) \cdot
                         (\sum_j \beta_j \cdot (C(B_j \uhr n)-C(n))) \\
    & \leq & C(n)+4c+2d+d'.
\end{eqnarray*}
The inequality from the first to the
second line follows from the definition of $d'$; the inequality from  the second  to the  third line follows
from Fact~\ref{fa:upgrade}.

  Thereafter,  one uses the distributivity of absolutely converging sums, and the definition of $c$.
The same can be proven for $2n+1$ with only a slight increase in  notational
complexity. 
\end{proof}

The next proposition shows that three conditions involving
the descriptive complexities of strings are equivalent.
It is unknown whether these equivalent conditions are true.
These conditions relate fundamental properties of plain
and prefix-free Kolmogorov complexity;  properties
of similar type have been investigated quite a lot in
algorithmic information theory; fairly  recent sample references are
\cite{Bauwens:thesis,Bauwens:16,Miller:11,Li.Vitanyi:book}. The last condition states informally that there are finitely many potential ways to compute $C(n)$ from $n$ and $K(n)$, and for each $n$ one of them succeeds. (In fact at present it is unknown whether there is a single way.)
\begin{proposition} \label{pr:equivalence}
The following   conditions are equivalent:
\begin{enumerate}[\bf(a)]
\item There is a constant $c$ such that for all $n$ and all $x \in \{0,1\}^n$,

\n
      $K(x)-K(n) \leq 2 \cdot (C(x)-C(n))+c$;
\item For each $c'$ there is  $d'$ such that for all $n$ and all $x \in \{0,1\}^n$, 

\n 
  if $C(x)-C(n) \leq c'$ then $K(x)-K(n) \leq d'$;
\item There is a constant $c''$ with $K(C(n)|n,K(n)) \leq c''$ for all $n$.
\end{enumerate}
\end{proposition}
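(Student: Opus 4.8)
The plan is to prove the cycle (c) $\Rightarrow$ (a) $\Rightarrow$ (b) $\Rightarrow$ (c). Throughout I would use the symmetry of information for prefix complexity (see \cite[Chapter 3]{Li.Vitanyi:book}), in the form $K(n, C(n)) =^+ K(n) + K(C(n) \mid n, K(n))$. Since the conditional term is nonnegative, this already gives $K(n,C(n)) \gep K(n)$, so condition (c) is equivalent to the clean statement $K(n, C(n)) \lep K(n)$. This reformulation is the quantity I would aim to produce, or to refute, in the various implications.

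For (c) $\Rightarrow$ (a), assume $K(C(n)\mid n, K(n)) \le c''$. By the identity above, $K(n, C(n)) \lep K(n)$. For any string $x$ of length $n$, the general inequality $K(x) \lep K(x \mid n, C(n)) + K(n, C(n))$ combined with Fact~\ref{fa:upgrade}, which gives $K(x\mid n, C(n)) \lep 2(C(x)-C(n))$, yields $K(x) \lep 2(C(x)-C(n)) + K(n) + c''$; rearranging gives (a). The implication (a) $\Rightarrow$ (b) is then immediate: if (a) holds with constant $c$ and $C(x)-C(n)\le c'$, then $K(x)-K(n) \le 2c' + c$, so one may take $d' = 2c'+c$.

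The implication (b) $\Rightarrow$ (c) is the main obstacle. I would argue the contrapositive, reducing it to the following lower bound: there is a constant $c'$ such that for every $n$ some string $x$ of length $n$ with $C(x) \lep C(n)+c'$ satisfies $K(x) \gep K(n) + K(C(n)\mid n, K(n))$. Granting this, an unbounded $K(C(n)\mid n, K(n))$ would produce strings of bounded $C$-excess but unbounded $K$-excess, violating (b) for the fixed $c'$. The starting point is Chaitin's counting theorem (\cite[Theorem 2.2.26]{Nies:book}): for fixed $c'$ the set $F_{n,c'} = \{x : |x| = n,\ C(x) \le C(n)+c'\}$ has size $O(1)$ and is enumerable from $(n, C(n))$. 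Combining Fact~\ref{fa:upgrade} with symmetry of information shows that every $x \in F_{n,c'}$ satisfies $K(x) =^+ K(n, C(n)) - K(C(n) \mid x, K(x))$, so the desired bound amounts to finding, inside this bounded set, a single string $x$ from which $C(n)$ can be recovered using only $O(1)$ bits beyond $x$ and $K(x)$.

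Pinning down such a witness — equivalently, extracting $C(n)$ from $(n, K(n))$ with bounded advice — is where the real work lies, and I expect it to require a careful counting argument in the style of Bienvenu, Merkle and Shen~\cite{Bienvenu.Merkle.ea:08}. The leverage I would try to exploit is that the $K$-ball $\{x : |x|=n,\ K(x)\le K(n)+d'\}$ is enumerable from $(n, K(n))$ and, by (b), contains all of $F_{n,c'}$, so that the value $C(n) =^+ \min_{|x|=n} C(x)$ is pinned down inside an effectively listed finite family. Turning this confinement into an $O(1)$ advice bound, rather than merely an upper-semicomputation of $C(n)$, is the delicate point I anticipate as the crux of the argument.
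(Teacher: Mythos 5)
Your (c)$\Rightarrow$(a) and (a)$\Rightarrow$(b) are correct and match the paper's argument in content (the paper runs the cycle in the order (a)$\Rightarrow$(b)$\Rightarrow$(c)$\Rightarrow$(a), but these two steps are the same: Fact~\ref{fa:upgrade} plus the observation that a shortest prefix-free description of $n$ also yields $K(n)$, so that boundedness of $K(C(n)\mid n,K(n))$ lets you pass from the condition $(n,C(n))$ to the condition $(n,K(n))$). The genuine gap is in (b)$\Rightarrow$(c), which you explicitly leave open: you correctly reduce the problem to exhibiting, among the $O(1)$ many strings of length $n$ with $C$-excess at most $c'$, a single one from which $C(n)$ can be recovered with $O(1)$ advice given $(n,K(n))$, but you then declare that pinning down such a witness is ``where the real work lies.'' That witness is the entire content of the implication, so the proof is not complete as written.

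The paper's resolution is short and concrete: take $x_n = 0^{C(n)}1 0^{n-C(n)-1}$, the characteristic string of the singleton $\{C(n)\}$ cut at length $n$ (defined for the cofinitely many $n$ with $C(n)<n$). Then $C(x_n)=^+ C(n)$, because a shortest plain program $p$ for $n$ determines both $n=U(p)$ and $C(n)=|p|$, hence $x_n$; and conversely $C(n)$ is read off from $x_n$ as the position of its unique $1$. By (b) there is a constant $d'$ with $K(x_n)\le K(n)+d'$. By the counting theorem for $K$ (\cite[Theorem 2.2.26]{Nies:book}) there are at most $O(2^{d'})$ strings $y$ of length $n$ with $K(y)\le K(n)+d'$, and these can be enumerated from $n$ and $K(n)$; specifying $x_n$ by its index in that enumeration gives $K(x_n\mid n,K(n))\le d''$ for a constant $d''$, hence $K(C(n)\mid n,K(n))\le c''$. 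Your worry about obtaining only an upper-semicomputation of $C(n)$ dissolves once the witness carries $C(n)$ explicitly in its bits; no counting argument beyond the standard counting theorem you already invoke is needed.
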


\begin{proof}
The implication (a) to (b) is straightforward: if $c'$ bounds
$C(x)-C(n)$ then $d' = 2c'+c$ bounds $K(x)-K(n)$.

The implication (b) to (c) can be seen as follows: Note that for almost
all $n$ it holds that $C(n) < n$. So given $n$ sufficiently large,
one considers the string $x_n = (\{C(n)\} \uhr n)$; this string has
plain Kolmogorov complexity satisfying $C(x_n) =^+ C(n)$, let $c'$
be such that $C(x_n) \leq C(n)+c'$. Thus $K(x_n) \leq K(n)+d'$
for all $n$ where $x_n$ is defined by condition (b). By
\cite[Theorem 2.2.26]{Nies:book} there are at most $O(2^{d'})$
strings $y \in \{0,1\}^n$ with $K(y) \leq K(n)+d'$ and one can
enumerate the first, the second, $\ldots$, the $k$-th until
$x_n$ comes up, let $k$ be the corresponding sequence-number.
Now $k \leq 2^{d''}$ for some constant independent of $n$
and therefore $K(x_n|n,K(n)) \leq^+ d''$. As one can compute
$C(n)$ from $x_n$, there is a further constant $c''$
with $K(C(n)|n,K(n)) \leq c''$ for all $n$.

The implication (c) to (a) can be derived as in 
Theorem~\ref{pr:sixfive}~(b): for a
string $x$ of length $n$ with $C(x)-C(n) \leq b$
one has  $K(x|n,C(n)) \leq^+ 2b$. Now by condition (c)
it holds that $K(C(n)|n,K(n))$ is bounded by a constant; 
  hence so is    $K(x|n,K(n)) \leq^+ 2 \cdot (C(x)-C(n))$.
This yields condition (a).
\end{proof}

\n
If these conditions are true then the question whether  all $C$-trivial measures
are $K$-trivial has an affirmative answer.  To see this note that if $\mu = \sum_r \alpha_r \cdot \delta_{A_r}$ is
$C$-trivial then there is a constant $b$ such that
$\sum_r \alpha_r \cdot (C(A_r \uhr n)-C(n)) \leq b$ for all $n$.
It follows from Condition~(a) that
\begin{eqnarray*}
  K(\mu \uhr n) & = &  K(n)+\sum_r \alpha_r \cdot (K(A_r \uhr n)-K(n)) \\
  & \leq & K(n) + \sum_r \alpha_r \cdot (2 \cdot (C(A_r \uhr n)-C(n))+c) \\
  & \leq & K(n) + c + 2 \cdot \sum_r \alpha_r \cdot (C(A_r \uhr n)-C(n))
    \leq K(n)+2b+c.
\end{eqnarray*}

\medskip
\n
{\em Strong $C$- and strong $K$-triviality for measures.}

\n 
Given the examples above we consider a strengthening of our notions.

\begin{definition}
Call a measure $\mu$ {\em strongly $C$-trivial} if it can be written as a convex combination
$\sum_s \alpha_s \delta_{A_s}$ such that the $A_s$ are   $C$-trivial via 
constants $b_s$ such that  $\sum_s \alpha_s \cdot b_s $
is finite. Similarly,  one defines {\em strongly
$K$-trivial} measures.
\end{definition}

\begin{fact} \label{fa:convergentsum}
Every strongly $C$-trivial measure is $C$-trivial.  Every
strongly $K$-trivial measure is $K$-trivial.
\end{fact}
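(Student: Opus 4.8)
The plan is to verify the statement directly from the definitions, since it amounts to checking that the defining condition of strong triviality (a convergence requirement on weighted constants) implies the plain triviality condition (a single uniform bound on the averaged complexity). First I would unfold the definition of strong $C$-triviality: writing $\mu = \sum_s \alpha_s \delta_{A_s}$ where each $A_s$ is $C$-trivial with witnessing constant $b_s$, meaning $C(A_s \uhr n) \lep C(n) + b_s$ for all $n$, and where $\sum_s \alpha_s b_s < \infty$.

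The key computation is to take the $\mu$-average of the individual initial segment complexities. Since $\mu$ is the convex combination of the Dirac measures $\delta_{A_s}$, and recalling from Definition~\ref{def:ISC} that $C(\delta_{A_s} \uhr n) = C(A_s \uhr n)$, linearity of the average gives
\[
   C(\mu \uhr n) = \sum_s \alpha_s\, C(A_s \uhr n).
\]
Applying the triviality bound $C(A_s \uhr n) \leq C(n) + b_s + O(1)$ term by term, and then using $\sum_s \alpha_s = 1$ together with $\sum_s \alpha_s b_s < \infty$, I obtain
\[
   C(\mu \uhr n) \leq C(n) \sum_s \alpha_s + \sum_s \alpha_s b_s + O(1)
     \lep C(n),
\]
which is exactly $C$-triviality of $\mu$. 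The argument for strong $K$-triviality implying $K$-triviality is verbatim the same with $C$ replaced by $K$ throughout.

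The only point requiring care is the handling of the additive $O(1)$ constants hidden in each $C(A_s \uhr n) \lep C(n) + b_s$: one must ensure that a single additive constant works uniformly across all $s$, which is legitimate since the implicit constant comes from fixing the universal machine and is independent of the set being described. The main (and only mild) obstacle is thus bookkeeping: confirming that absorbing these constants into the $b_s$, or bounding them by a common constant, does not disturb the finiteness of the weighted sum $\sum_s \alpha_s b_s$. This is immediate because adding a fixed constant $e$ to every $b_s$ changes the sum by $e \sum_s \alpha_s = e$, which is still finite. With that observed, the proof is a one-line interchange of a finite bound with an absolutely convergent sum.
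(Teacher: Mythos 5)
Your proposal is correct and follows essentially the same route as the paper's own proof: expand $C(\mu\uhr n)$ by linearity over the convex combination, apply the triviality bound $C(A_s\uhr n)\le C(n)+b_s$ term by term, and use $\sum_s\alpha_s=1$ together with the finiteness of $\sum_s\alpha_s b_s$. Your extra remark about absorbing the uniform $O(1)$ machine constant is a harmless bookkeeping point that the paper folds into the constants $b_s$.
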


\begin{proof}
By hypothesis $\mu = \sum_s \alpha_s \cdot \delta_{A_s}$ where  each $A_s$
is  $C$-trivial with  constant $b_s$, and  $\sum \alpha_s \cdot b_s < c$. For all $n$ we have  $C(\mu \uhr n) = \sum_s \alpha_s \cdot
C(A_s \uhr n) \leq \sum_s \alpha_s \cdot (C(n)+b_s) \leq
C(n) + \sum_s \alpha_s \cdot b_s \leq C(n)+c$. The proof is  analogous for $K$.
\end{proof}

\n
While we ignore  whether every $C$-trivial measure is $K$-trivial, we show that 
this implication holds  for the strong versions of the two notions.
For this we prove a lemma about sets of interest on its own. Its proof  owes to the proof   that each $C$-trivial is recursive (Chaitin \cite{Chaitin:76},
also  see \cite[Proof of Thm 5.2.20(i)]{Nies:book}).

\begin{lemma}
There is an absolute constant $d$ such that every
 set that is $C$-trivial  via a  constant $b$
is $K$-trivial  via the constant $2b+d$.
\end{lemma}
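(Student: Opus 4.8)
The plan is to make quantitative the classical Chaitin argument that $C$-triviality implies recursiveness, keeping careful track of how the $C$-triviality constant $b$ propagates into a $K$-triviality constant. The goal is a bound $K(A\uhr n)\lep K(n)+(2b+d)$ with $d$ an absolute constant independent of both $A$ and $b$. The natural tool is Fact~\ref{fa:upgrade}, which already says $K(x\mid n,C(n))\lep 2(C(x)-C(n))$ via an absolute constant. Applying this to $x=A\uhr n$, and using that $A$ is $C$-trivial via $b$ so that $C(A\uhr n)\le C(n)+b$, we immediately get $K(A\uhr n\mid n,C(n))\le 2b+d_0$ for some absolute $d_0$. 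The remaining task is to remove the conditioning on $C(n)$, i.e.\ to pass from $K(A\uhr n\mid n,C(n))$ to $K(A\uhr n)$ (or at least to $K(A\uhr n\mid n)$, since $K(n)\lep$ the cost of supplying $n$).

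First I would recall the standard chain-rule type inequality $K(x)\lep K(n)+K(x\mid n,C(n))+K(C(n)\mid n)$, valid for $|x|=n$: one supplies a shortest self-delimiting description of $n$ (length $K(n)$, from which $C(n)$ can in principle be recovered given enough extra information), then a self-delimiting description of $C(n)$ given $n$, and finally the conditional description of $x$. The genuinely delicate point is the term $K(C(n)\mid n)$: a priori this is not bounded by a constant, and indeed whether it is bounded is exactly the open Condition~(c) of Proposition~\ref{pr:equivalence}. So a naive approach stalls here. The way around this, following Chaitin's original recursiveness proof, is \emph{not} to describe $C(n)$ separately but to exploit the fact that the conditional object we are describing lives in a recursively enumerable set of controlled size, so that we can search for $C(n)$ rather than be told it.

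The key step is therefore a combinatorial counting argument. Fix $n$ and the correct value $C(n)$. By Fact~\ref{fa:upgrade} the string $A\uhr n$ sits inside the r.e.\ set $T_n=\{x\in\{0,1\}^n : K(x\mid n,C(n))\le 2b+d_0\}$, which has at most $O(2^{2b})$ elements. The plan is to describe $A\uhr n$ by the pair consisting of $n$ (via $K(n)$ bits, self-delimiting) together with the \emph{index} of $A\uhr n$ in an enumeration of $T_n$; that index costs about $2b+O(1)$ bits. The obstacle is that enumerating $T_n$ presupposes knowing $C(n)$, which we are trying to avoid paying for. Chaitin's trick is to run the enumeration for \emph{every} candidate value $v\le n$ of $C(n)$ in parallel and to observe that the true value is recovered because $A$ is $C$-trivial along all lengths simultaneously: the constraints coming from infinitely many $n$ pin down the enumeration. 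Concretely, I would set up a single prefix-free machine that, given a self-delimiting code for $n$ followed by roughly $2b$ further bits, enumerates the relevant candidate strings and halts on the designated one, and then argue by a weight/counting estimate (summing $2^{-K(n)}2^{-2b}$) that this machine is a legitimate prefix-free machine, yielding $K(A\uhr n)\lep K(n)+2b+d$ with $d$ absolute.

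The hard part will be executing this last decoupling rigorously — showing that one can recover $A\uhr n$ uniformly without being handed $C(n)$, while keeping the cost at $2b+d$ with $d$ genuinely independent of $b$ and of $A$. This is precisely where the argument must mirror, and adapt to the averaged/quantitative setting, the subtle part of Chaitin's theorem that a $C$-trivial set is recursive; the cited reference \cite[Proof of Thm 5.2.20(i)]{Nies:book} provides the template, and I expect the proof to consist of importing that construction and verifying that its implicit constant is absolute and that the dependence on the compression slack is the claimed $2b$ (the factor $2$ coming directly from Fact~\ref{fa:upgrade}). Once the counting bound is in place, the conclusion $K(A\uhr n)\le K(n)+2b+d$ for all $n$ is immediate, which is the assertion of the lemma.
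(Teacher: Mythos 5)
Your overall plan is the right one --- a Chaitin-style counting argument, with the factor $2$ in $2b+d$ coming from the fact that there are only $O(b^2\cdot 2^b)$ strings of length $n$ with $C(x)\le C(n)+b$ (equivalently, via Fact~\ref{fa:upgrade}) --- and you correctly locate the difficulty: one must index $A\uhr n$ inside a \emph{uniformly enumerable} set of size $O(2^{2b})$ without paying for $C(n)$, since boundedness of $K(C(n)\mid n,K(n))$ is exactly the open condition of Proposition~\ref{pr:equivalence}. But the step you defer as ``the hard part'' is the entire content of the lemma, and the one concrete mechanism you offer for it does not work as stated. Running the enumeration of $\{x: K(x\mid n,v)\le 2b+d_0\}$ ``for every candidate value $v\le n$ of $C(n)$ in parallel'' produces a union of up to $n+1$ sets each of size $O(2^{2b})$; indexing into that union costs an extra $\log n$ bits, and selecting the correct $v$ is precisely the problem you set out to avoid. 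The appeal to ``constraints coming from infinitely many $n$ pinning down the enumeration'' is a gesture at Chaitin's proof, not an argument.

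The paper's resolution is to sidestep $C(n)$ entirely. Define the r.e.\ tree $T_b$ by putting a string $x$ of length $n$ into $T_b$ iff some extension $y$ of $x$ of length $2^{n+1}$ satisfies $C(y\uhr m)\le \log(m)+b+1$ for \emph{all} $m\le 2^{n+1}$; this condition mentions only the trivial upper bound $\log(m)$, so $T_b$ is uniformly recursively enumerable (and prefix-closed). Every initial segment of a set that is $C$-trivial via $b$ lies in $T_b$ (up to adjusting $b$ by an absolute constant). The width bound then comes from choosing, for each $n$, some $m$ with $2^n\le m\le 2^{n+1}$ and $C(m)>n$: at that particular length the defining condition reads $C(y\uhr m)\le C(m)+b+O(1)$, so Chaitin's counting bounds the number of admissible $y\uhr m$ by $O(b^2\cdot 2^b)$, and hence the number of length-$n$ strings in $T_b$ by the same quantity. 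Describing $A\uhr n$ as ``the $i$-th string of length $n$ enumerated into $T_b$'' with $i$ and $b$ coded in a self-delimiting way then costs $K(n)+2b+d$. Without this device --- replacing the unknowable $C(n)$ by $\log(m)$ at an incompressible length $m$ in the dyadic block above $n$ --- your argument does not close, so as it stands the proposal has a genuine gap at its central step.
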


\begin{proof}
As mentioned in the proof of Fact~\ref{fa:upgrade},
there are only $O(b^2 \cdot 2^b)$ binary strings $x$ of length $n$ with
$C(x) \leq C(n)+b$. Now one considers the following recursively enumerable
tree $T_b$ of binary strings: a string $x$ of length $n$ is in $T_b$
iff there is an $y$ extending $x$ of length $2^{n+1}$ such
that for all $m \leq 2^{n+1}$ it holds that $C(y \uhr m) \leq \log(m)+b+1$.

Note that there is an $m$ with $2^n \leq m \leq 2^{n+1}$ such that
$C(m) > n$. Thus for that length $m$, there are only $O(b^2 \cdot 2^b)$
strings which qualify and therefore, for length $n$, there are at most
$O(b^2 \cdot 2^b)$ many strings in $T_b$. Furthermore, note that the
membership in $T_b$ is closed under prefixes and therefore $T_b$
is a recursively enumerable tree.

For each length $n$, one can
enumerate from given $n,b$ the $O(b^2 \cdot 2^b)$ strings of that
length in $T_b$. So one can describe these strings by any description
of $n$ plus prefix-free codes of size $2b+d$ which compute the strings
given $n,b$. Note that these codes just say ``the first string of length
$n$ enumerated into $T_b$'', ``the second string of length $n$ enumerated
into $T_b$'' and so on. As there are $O(2^b \cdot b^2)$ of these strings,
one can represent them and $b$ in a prefix-free way with $2b$ plus
constant bits. Thus they can be described in a prefix-free way
with a description of size $K(n)+2b+d$ where $d$ is a constant
independent of $b,n$.
Hence $K(A \uhr n) \leq K(n)+2b+d$.
\end{proof}

\begin{proposition} \label{pr:strongimplication}
Every strongly $C$-trivial measure is strongly $K$-trivial.
\end{proposition}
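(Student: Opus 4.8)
The plan is to reduce everything to the Lemma proved just above, applying it to each atom of the measure and then controlling the resulting weighted sum of constants. First I would invoke the definition of strong $C$-triviality to write $\mu = \sum_s \alpha_s \delta_{A_s}$, where each $A_s$ is $C$-trivial via a constant $b_s$ and the weighted sum $\sum_s \alpha_s b_s$ is finite. This same decomposition will serve as the witness for strong $K$-triviality; only the triviality constants will change.

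Next I would apply the preceding Lemma to each $A_s$ individually. The Lemma furnishes an absolute constant $d$ -- crucially independent of $s$, of $b_s$, and of the length $n$ -- such that $C$-triviality of $A_s$ via $b_s$ yields $K$-triviality of $A_s$ via $2b_s + d$. Hence all the $A_s$ are $K$-trivial, with constants $c_s := 2b_s + d$.

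The only remaining point is to verify that $\sum_s \alpha_s c_s < \infty$, so that these new constants still form a finite weighted sum. The computation is $\sum_s \alpha_s (2 b_s + d) = 2 \sum_s \alpha_s b_s + d \sum_s \alpha_s = 2 \sum_s \alpha_s b_s + d$, where the last equality uses $\sum_s \alpha_s = 1$. Since $\sum_s \alpha_s b_s$ is finite by the hypothesis of strong $C$-triviality, the right-hand side is finite, and therefore $\mu$ is strongly $K$-trivial via the decomposition $\sum_s \alpha_s \delta_{A_s}$.

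There is essentially no hard step here, since the entire difficulty has been absorbed into the Lemma. The one point worth emphasizing -- and the place where a naive argument could break down -- is the uniformity of the additive constant $d$. Had the Lemma instead produced a bound of the form $2b_s + d_s$ with $d_s$ depending on the set, the term $\sum_s \alpha_s d_s$ would not obviously converge and the conclusion could fail. Because $d$ is absolute, its total contribution is merely $d \cdot \sum_s \alpha_s = d$, which is harmless.
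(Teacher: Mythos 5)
Your proof is correct and follows essentially the same route as the paper's: decompose $\mu$ as $\sum_s \alpha_s \delta_{A_s}$, apply the preceding Lemma to each atom to obtain $K$-triviality constants $2b_s+d$, and observe that $\sum_s \alpha_s(2b_s+d)=2\sum_s\alpha_s b_s+d$ remains finite. Your added remark about the necessity of the uniformity of $d$ is a sound observation, though not needed beyond what the Lemma already guarantees.
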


\begin{proof}
Suppose that  a measure 
$\mu $ is given as a convex combination $ \sum_s \alpha_s \cdot \delta_{A_s}$
where each $A_s$ has the $C$-triviality constant $b_s$, and $\sum_s \alpha_s \cdot b_s$  is finite. 
 By the foregoing lemma every $A_s$ is  $K$-trivial via the  constant $d+2b_s$.
Clearly   $\sum_s \alpha_s \cdot (d+2b_s) =  d+2 \cdot \sum_s \alpha_s b_s$ is finite. So $\mu$ is strongly $K$-trivial.
\end{proof}

\n
One might ask whether every $K$-trivial measure is strongly $K$-trivial, 
and whether every $C$-trivial measure is strongly $C$-trivial. The answer to this
question is ``no'' as the following result shows.

\begin{proposition}
There is a measure $\mu$ which is $C$-trivial and $K$-trivial but not 
 strongly $K$-trivial (and hence not strongly $C$-trivial).
\end{proposition}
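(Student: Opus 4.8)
The plan is to construct a measure $\mu = \sum_k \alpha_k \delta_{A_k}$ that is genuinely trivial (in both the $C$ and $K$ senses) yet which \emph{cannot} be decomposed so that the weighted sum of triviality constants converges. The key tension to exploit is that strong $K$-triviality requires a \emph{single} decomposition whose triviality constants $b_s$ satisfy $\sum_s \alpha_s b_s < \infty$, whereas ordinary $K$-triviality only requires the \emph{average} complexity $K(\mu\uhr n)$ to stay within $K(n)+O(1)$ \emph{uniformly in }$n$. The idea is that a measure can achieve the latter by having its ``expensive'' atoms contribute to $K(\mu\uhr n)$ only at lengths $n$ where their individual excess complexity happens to be small, so that no single length ever witnesses a large weighted excess, even though the supremum over $s$ of the per-atom constants $b_s$ is infinite and unavoidably so.

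Concretely, I would take each $A_k$ to be a finite set (hence $K$-trivial), but arrange the $A_k$ so that $A_k\uhr n$ has excess complexity $K(A_k\uhr n)-K(n)$ that grows with $k$ but is ``delayed'': for small $n$ the initial segment $A_k\uhr n$ is an all-zero (or otherwise simple) string of complexity $\lep K(n)$, and only once $n$ passes some threshold depending on $k$ does $A_k$ reveal complex bits. First I would fix weights $\alpha_k$ (say $\alpha_k \asymp k^{-2}$ so $\sum\alpha_k=1$) and thresholds $n_k\to\infty$ fast. The atoms $A_k$ would be chosen so that $K(A_k\uhr n)-K(n)$ is bounded by a constant for $n< n_k$ but is comparable to its intrinsic triviality constant $b_k$ (which grows in $k$) for $n\ge n_k$. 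Because at any fixed length $n$ only the finitely many atoms with $n_k\le n$ contribute appreciable excess, and their combined weight $\sum_{k:\, n_k\le n}\alpha_k(K(A_k\uhr n)-K(n))$ can be kept bounded by spacing $n_k$ so that $\alpha_k b_k$ decays along the active range, one obtains $K(\mu\uhr n)\lep K(n)$, i.e.\ genuine $K$-triviality, and similarly $C$-triviality via Fact~\ref{fact: compare CK} or a parallel computation.

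The crux is then to show this $\mu$ is \emph{not} strongly $K$-trivial, which requires arguing about \emph{every} possible decomposition of $\mu$ into Dirac measures, not just the one we built. Here I would use that $\mu$ has countable support and that a discrete measure's decomposition into Dirac measures is \emph{essentially unique}: since $\mu = \sum_k \alpha_k\delta_{A_k}$ with distinct atoms $A_k$, any representation as a convex combination of Dirac measures must place weight exactly $\mu(\{A_k\})=\alpha_k$ on each atom $A_k$ and no mass elsewhere. Thus the decomposition is forced, and strong $K$-triviality would demand constants $b_k'$ witnessing $K(A_k\uhr n)\le K(n)+b_k'$ for all $n$ with $\sum_k\alpha_k b_k'<\infty$. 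I would show the intrinsic $K$-triviality constant of each $A_k$ is necessarily at least $\sim b_k$ (from its behaviour at lengths $n\ge n_k$, where the revealed complex bits force $K(A_k\uhr n)-K(n)$ to be large), so every valid choice has $b_k'\gep b_k$, whence $\sum_k\alpha_k b_k'\ge \sum_k\alpha_k b_k=\infty$ by our choice making this sum diverge.

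The main obstacle I anticipate is the simultaneous balancing of three quantities: making the \emph{averaged} excess $\sum_{k:n_k\le n}\alpha_k(K(A_k\uhr n)-K(n))$ bounded in $n$ (for triviality), while making the \emph{weighted sum of per-atom constants} $\sum_k\alpha_k b_k$ diverge (to defeat strong triviality). These pull in opposite directions, so the $\alpha_k$, $b_k$, and thresholds $n_k$ must be tuned carefully — the divergence of $\sum\alpha_k b_k$ comes from summing over \emph{all} $k$, whereas boundedness of the average at each $n$ only constrains the finitely many $k$ active at that length. The delaying trick (atoms simple until their threshold) is precisely what decouples these sums; I would likely take $b_k\asymp k$ with $\alpha_k\asymp k^{-2}$, so $\sum_k\alpha_k b_k\asymp\sum_k k^{-1}=\infty$, while spacing $n_k$ so widely that at any length $n$ the active indices form a short enough initial window for the truncated weighted sum to stay bounded. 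Verifying the lower bound $b_k'\gep b_k$ rigorously — that $A_k\uhr n$ is genuinely incompressible beyond $K(n)+b_k$ at the relevant lengths — is the step requiring the most care, and would use a counting argument as in the proof of Proposition~\ref{prop:atomos supportos}.
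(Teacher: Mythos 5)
There is a genuine gap, and it is fatal to the construction as you describe it. You stipulate that $K(A_k\uhr n)-K(n)$ is ``comparable to its intrinsic triviality constant $b_k$ \ldots for $n\ge n_k$'', i.e.\ the excess \emph{persists} for all lengths past the threshold. This is indeed what happens if the complex bits of $A_k$ form a block that is incompressible given the length: that block sits inside every longer initial segment, so the excess never returns to $O(1)$. But then at length $n$ the averaged excess is at least a constant times $\sum_{k:\,n_k\le n}\alpha_k b_k$, a partial sum of the series you have arranged to diverge; since boundedness of all partial sums of a positive series is the same as its convergence, your two requirements (averaged excess bounded in $n$, and $\sum_k\alpha_k b_k=\infty$) are directly contradictory. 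No spacing of the thresholds $n_k$ helps --- spacing only controls \emph{when} each term enters the partial sum, not the fact that it stays there. Under your persistence assumption, $K$-triviality of a purely atomic measure is literally equivalent to strong $K$-triviality, so the plan cannot produce a separating example. (A secondary slip: Fact~\ref{fact: compare CK} compares deficiencies from the \emph{maximal} complexity and gives no route from $K$-triviality to $C$-triviality; the paper even shows a strongly $K$-trivial measure that is not $C$-trivial, so a separate direct argument for $C$ is required.)

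The missing idea, which is the heart of the paper's proof, is to make each atom's excess large only on a \emph{bounded window} of lengths and then drop back to $O(\log)$. The paper takes $A_n$ to encode, starting at position $m_n$, the string $1p_{n+1}$, where $p_n$ is a description of length about $2^n$ of a busy-beaver-type number $m_n$ (so $K(p_{n+1})=^+2^{n+1}$ and $K(m_n)=^+2^n$). For lengths $\ell$ in the block $[m_n,m_{n+1})$, every earlier atom $A_h$ ($h<n$) has $A_h\uhr\ell$ computable from $h$ and $\ell$ --- the complex content $p_{h+1}$ describes $m_{h+1}\le\ell$ and becomes recoverable from the length --- so only the single currently active atom $A_n$ carries a large excess $\approx 2^{n+1}$, and its weight $2^{-n-1}$ makes that contribution $O(1)$; this is what saves $K$-triviality. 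Meanwhile the intrinsic constant of $A_n$ is witnessed inside its window, $b_n\ge K(A_n\uhr{m_n+2^{n+1}})-K(m_n+2^{n+1})\ge 2^n$, so $\sum_n 2^{-n-1}b_n\ge\sum_n 1/2=\infty$. Your closing observation --- that a purely atomic measure has an essentially unique decomposition into Dirac measures, so it suffices to lower-bound the constants of the given atoms --- is correct and is exactly how one justifies that \emph{no} decomposition witnesses strong $K$-triviality; but without the ``excess comes back down'' mechanism the measure you build is not $K$-trivial in the first place.
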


\begin{proof}
For each $n$, let $p_n$ be the descripikjtion of length up to $2^n$
for the prefix-free universal machine which produces
the longest output string of form $0^{m_n}$; here recall that $m_n$ and
$0^{m_n}$ are identified with each other.
Note that $K(p_n)$ and $|p_n|$ are both approximately $2^n$.
Now one defines the sets $A_n = \{m_n+k: 1p_{n+1}$ has a $1$ at position $k\}$.
The sets $A_n$ are uniformly limit recursive and the approximations converge
in time $m_{n+1}$ to $A_n$; furthermore, $\max(A_n) \leq m_n+2^{n+1}$.
Every set $A_n$ is finite and thus $K$-trivial.
Let $\mu = \sum_n 2^{-n-1} \cdot \delta_{A_n}$.

We show that $\mu$ is $K$-trivial.
One sees that for $\mu \uhr \ell$ with $m_n \leq \ell < m_{n+1}$,
$2^{-n-1}$ of the mass is concentrated on $\emptyset \uhr \ell$ and
for $h=0,1,\ldots,n$, $2^{-h-1}$ of the mass is concentrated on $A_h \uhr \ell$.
For $h < n$, the string $A_h \uhr \ell$ can be computed from $h$ and $\ell$.
Thus it has prefix-free complexity $K(\ell)+O(\log(h))$ which contributes to
the average measure $K(\mu \uhr \ell)$ a term bounded by
$2^{-h-1} \cdot K(\ell)+2^{-h-1} \cdot 2 \log(h)$.
As the sum $2^{-h-1} \cdot 2 \cdot \log(h)$ converges
to a constant $c$, one can estimate the overall sum as
$$K(\ell)+c+ 2^{-n-1} \cdot (K(A_n \uhr \ell)-K(\ell)),$$ and the latter
is bounded by $K(\ell)+c+2$, as $K(A_n \uhr \ell) \leq K(\ell)+2^{n+2}$.
Thus $\mu$ is $K$-trivial. By a similar argument, $\mu$ is  $C$-trivial.

The $K$-triviality constant $b_n$ for $A_n$ is bounded from below
by   $K(A_n \uhr {m_n+2^{n+1}})-K(m_n+2^{n+1})$ and that term is
at least $2^n$, as $K(p_{n+1}) =^+ 2^{n+1}$ and $K(m_n) =^+ 2^n$.
It follows that $\sum_n 2^{-n-1} b_n \geq \sum_n 1/2=\infty$. Thus $\mu$ is not strongly $K$-trivial.
\end{proof}

\n 
By the facts and propositions above, in Proposition~\ref{prop:gamma}
  the condition that $\Gamma$ be a
truth-table reduction  is necessary;
   downward closure of the $K$-trivial
measures does not hold for Turing reductions in general.


\begin{proposition}
There are measures $\mu,\nu$ and a Turing reduction $\Gamma$
such that $\nu$ is strongly $K$-trivial,
$\Gamma$ is defined on all the atoms of $\nu$ and $\mu=\Gamma(\nu)$
is not $K$-trivial.
\end{proposition}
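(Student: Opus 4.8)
The plan is to exploit the one place where Proposition~\ref{prop:gamma} used that $\Gamma$ is a truth-table reduction: there the use is bounded by a \emph{computable} function $f$, so that $K(f(n))=^+K(n)$ and descriptive complexity transfers from $\nu\uhr{f(n)}$ to $\mu\uhr n$ without loss. For a general Turing reduction the use is only bounded by a non-computable function, and an integer $u$ arising as such a use can have $K(u)$ as large as we please. I will turn this into a failure of $K$-triviality for the image while keeping the source strongly $K$-trivial. As the target image I take the measure from the example preceding this proposition, $\mu=\sum_m\alpha_m\delta_{\Omega\uhr m}$ with $\alpha_m=\Theta(m^{-3/2})$, where $\Omega\uhr m$ is read as the finite set $\{\ell<m:\ell\in\Omega\}$. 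That example already shows $\mu$ is not $K$-trivial: at length $k$ the whole tail $\{m\ge k\}$, of $\mu$-mass $\Theta(k^{-1/2})$, shares the common prefix $\Omega\uhr k$ of complexity $\gep k$, so $K(\mu\uhr k)-K(k)\gep k^{-1/2}\cdot k=k^{1/2}\to\infty$. It therefore suffices to realise this $\mu$ as $\Gamma(\nu)$ for a strongly $K$-trivial $\nu=\sum_m\alpha_m\delta_{B_m}$ and a single Turing functional $\Gamma$ that is defined on every $B_m$ with $\Gamma(B_m)=\Omega\uhr m\,0^\infty$.

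First I would record the obstruction that dictates the shape of the construction. If $\Gamma(B_m)\uhr k$ has complexity $K(k)+e$ and the use of $\Gamma^{B_m}$ on inputs $<k$ is $u$, then for every oracle length $m'\ge u$ one recovers $\Gamma(B_m)\uhr k$ by simulating $\Gamma$ on the finite string $B_m\uhr{m'}$, whence $K(B_m\uhr{m'})\gep e$. Since $b_m\ge K(B_m\uhr{m'})-K(m')\gep e-K(m')$, choosing $m'$ to be the least length of small descriptive complexity above $u$ would force the $K$-triviality constant of $B_m$ to be at least $e$ — \emph{unless} no such simple length exists below $u$, i.e.\ unless $u$ exceeds the largest integer of complexity below $e$. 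Thus, to code a prefix of excess $e$ into the image while keeping $B_m$ cheap, the use must grow faster than this busy-beaver-type bound; conversely, once $u$ is that large, every length past $u$ is itself complex and there is room for the excess without spoiling $K$-triviality. This is exactly what a Turing, but not a tt, reduction permits.

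Concretely I would use that $\Omega$ is $\DII$ and that the settling time $t_m$ of $\Omega\uhr m$ dominates every computable function (it is of busy-beaver growth relative to $K(\Omega\uhr m)\gep m$). I build each $B_m$ by a cost-function construction coding the approximation $\langle\Omega_s\uhr m\rangle_s$ into $B_m$ so slowly that: (i) each initial segment $B_m\uhr{m'}$ reflects only the approximations that have settled by a stage whose code lies below $m'$, and is compressible to $K(m')+b_m$ for a constant $b_m$; and (ii) the limit set $B_m$, read up to the position encoding the settled value, yields $\Omega\uhr m$ together with the cut-off~$m$. A single partial functional $\Gamma$ then scans $B_m$ until it detects the settled block, outputs $\Omega\uhr m$ and thereafter zeros; its use on $B_m$ is of order $t_m$, hence non-computable, so $\Gamma$ is genuinely Turing and Proposition~\ref{prop:gamma} does not apply. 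The landmark bookkeeping here is of the same flavour as the numbers $m_n$ and incompressible descriptions $p_n$ used above, which already encode busy-beaver growth.

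The hard part will be the estimate in~(i): controlling the constants $b_m$ well enough that $\sum_m\alpha_m b_m<\infty$, so that $\nu$ is not merely $K$-trivial but \emph{strongly} $K$-trivial. With $\alpha_m=\Theta(m^{-3/2})$ this requires $b_m=o(\sqrt m)$, say $b_m=O(\log m)$; the counting bound of at most $2^{b_m+O(1)}$ sets that are $K$-trivial with constant $b_m$ leaves ample room for the needed infinitely many distinct atoms. The leak inequality of the second paragraph shows $b_m$ must be paid for entirely by the complexity of the use, so the coding of $\Omega\uhr m$ has to be pushed out to positions beyond the integer-complexity bound for the accumulated excess, and the cost-function accounting must then verify that along the way no descriptively simple length ever carries more than $b_m$ bits of surplus complexity. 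Checking that one single functional $\Gamma$ decodes every atom and converges on each $B_m$ is routine once the coding positions are fixed; the delicate point is precisely the simultaneous smallness of all the constants~$b_m$.
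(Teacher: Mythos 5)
Your overall strategy is the same as the paper's: make the atoms of $\nu$ encode the settling time of the left-r.e.\ approximation to $\Omega$, so that each atom is $K$-trivial with a small constant while a Turing functional of non-computable use decodes a long, complex prefix of $\Omega$ from it. Your analysis of why the image measure fails to be $K$-trivial (tail mass $\Theta(k^{-1/2})$ concentrated on a common prefix of complexity $\geq^+ k$) is correct, and your ``obstruction'' paragraph correctly identifies why the use must have busy-beaver growth. But the argument has a genuine gap exactly where you flag ``the hard part'': the sets $B_m$ are never constructed. A ``cost-function construction coding the approximation $\langle \Omega_s\uhr{m}\rangle_s$ into $B_m$ so slowly that every initial segment is compressible to $K(m')+O(\log m)$'' is a specification, not a construction; it is not clear how a generic cost-function argument would deliver uniform control of the constants $b_m$ as a function of $m$, which you yourself identify as the delicate point and then defer. (A side quibble: $b_m=o(\sqrt m)$ is not sufficient for $\sum_m\alpha_m b_m<\infty$ when $\alpha_m=\Theta(m^{-3/2})$ --- take $b_m=\sqrt m/\log m$ --- though the $O(\log m)$ target you then name is.)

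The missing step has a one-line solution that makes the cost-function machinery unnecessary: take each $B_m$ to be a \emph{singleton}, $B_m=\{k_m\}$ with $k_m=\langle m, c(m)\rangle$, where $c(m)$ is the settling time of the first $n_m\approx(m+2)^{3/2}$ bits of the monotone left-enumeration of $\Omega$ (the paper uses $\Omega\oplus\overline\Omega$, but the idea is identical). For $\ell<k_m$ one has $B_m\uhr{\ell}=0^\ell$, and for $\ell\geq k_m$ the string $B_m\uhr{\ell}$ is \emph{computable from $\ell$ and $m$}: since $\ell$ bounds the settling time and the approximation is monotone, $c(m)$ --- hence $k_m$ --- is recovered by running the enumeration for $\ell$ steps and finding the least stage at which the first $n_m$ bits agree with stage $\ell$. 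This gives $b_m=2\log m+O(1)$ outright, so $\nu=\sum_m\gamma_m\delta_{B_m}$ is strongly $K$-trivial by definition, and the single functional that searches its oracle for the first element, decodes $\langle m,s\rangle$, and outputs the stage-$s$ approximation restricted to the relevant length is defined on every atom. Without some such concrete choice of atoms and the accompanying verification, what you have is a plan rather than a proof.
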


\begin{proof}
Consider the set $\Omega \oplus \overline \Omega$, that is, the
join of $\Omega$ with its complment. This set is left-r.e.\ and
initial segments of length $2m$ have approximately complexity $m$.
Now   let
$$
   A_m = \{x \in \Omega \oplus \overline \Omega: x < 2n_m\}
$$
and   $\gamma_m = \frac{1}{(m+1) \cdot (m+2)} = \frac{1}{m+1}-\frac{1}{m+2}$.
Furthermore, let $n_m$ be the least integer greater than  $(m+2)^{3/2}$ and
let $k_m = \langle m,c_{\Omega}(n_m)\rangle$ where $c_\Omega(n_m)$
is the time to follow the recursive left-enumeration of $\Omega$
until the first $n_m$ bits are correct (and therefore remain correct from then on).
Note that $k_m$ can be computed from $A_m \uhr {2n_m+2}$ and
$A_m \uhr {2n_m+2}$ can be computed from $k_m$. Let $B_m = \{k_m\}$
and $\nu = \sum_m \gamma_m \cdot \delta_{B_m}$ and
$\mu = \sum_m \gamma_m \cdot \delta_{A_m}$.

Note that for $\ell \geq k_m$, $k_m$ can be computed
from $\ell$ and $m$, as $\ell$ is an upper bound of $c_{\Omega}(n_m)$;
for $\ell < k_m$, $B_m \uhr \ell =0^\ell$. Thus the coding constant
of $B_m$ is $2\log(m)$ plus some constant.
As $\sum_m \gamma_m \cdot 2 \cdot \log(m)$ is a convergent sum -- almost
all terms are bounded by $(m+2)^{-3/2}$ which is a convergent sum
-- the measure $\nu$ is $K$-trivial by Fact~\ref{fa:convergentsum}.
Actually it is by definition even strongly $K$-trivial.

The measure $\mu$ is not $K$-trivial, as all $A_k$ with $k \geq m$
have the common prefix $u = \Omega \oplus \overline \Omega \uhr {2n_m}$
of length $2n_m$ and that prefix has, for almost all $m$, the Kolmogorov
complexity $(m+2)^{3/2}$ or more. Note that $\mu(\{u\} \times \cantor)$
is at least $\frac{1}{m+2}$, so this string contributes to the average
$\mu \uhr {2n_m}$ at least $(m+2)^{1/2}$. Thus $\mu$ is not $K$-trivial,
as the function $m \mapsto K(2n_m)$ cannot be bounded from below
by any increasing unbounded recursive function.

Now one defines  a Turing functional $\Gamma$ that  translates  each  $B_m$ to $A_m$:
the functional searches in the oracle $B_m$ for the first element $x$
of $B_m$; as long as the oracle is not empty, such an element is found.
Then it determines the $m,s$ with $x = \langle m,s\rangle$ and computes
the corresponding entry in the set
$\{x \in \Omega_s \oplus \overline \Omega_s: x < 2n_m\}$. In the case
that the oracle has the set $B_m$, the search gives $k_m$
and this Turing reduction provides the set $A_m$.
Thus $\Gamma$ has the desired properties. Note that $\Gamma$
is undefined  only on $\emptyset$,  and the measure of $\{\emptyset\}$ with
respect to $\nu$ is $0$.
\end{proof}

\n
While we know that every strongly $C$-trivial measure is strongly $K$-trivial,
we ignore  whether every $C$-trivial measure is $K$-trivial.
The converse implication fails even for Dirac measures because each
$C$-trivial sequence is computable by the aforementioned result of Chaitin.
We next  show that this failure of the converse implication
can   be witnessed by a strongly $K$-trivial measure where all the atoms are recursive,  and
thus $C$-trivial (as sequences).

\begin{proposition}
There is a strongly $K$-trivial measure which is not $C$-trivial, and 
has only recursive atoms.
\end{proposition}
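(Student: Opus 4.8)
The plan is to realize $\mu$ as a countable convex combination $\mu = \sum_m \alpha_m\,\delta_{A_m}$ of Dirac measures at \emph{recursive} sets $A_m$, with summable weights $\alpha_m = \Theta(m^{-2})$. Since a recursive set is automatically both $C$- and $K$-trivial (via a constant depending on its program), the two genuine tasks are: (i) to keep the measure \emph{strongly} $K$-trivial, i.e.\ to choose the atoms so that their prefix-triviality constants $b_m$ satisfy $\sum_m \alpha_m b_m < \infty$ (and then invoke Fact~\ref{fa:convergentsum}); and (ii) to force $\sup_n\,(C(\mu\uhr n)-C(n)) = \infty$, so that $\mu$ fails to be $C$-trivial. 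As $C(\mu\uhr n) = \sum_m \alpha_m C(A_m\uhr n)$ and similarly for $K$, both averages are weighted sums of the excesses $C(A_m\uhr n)-C(n)$ and $K(A_m\uhr n)-K(n)$; the whole construction turns on driving the plain excess up in the weighted sum while holding the prefix excess down.

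The mechanism I would exploit is the gap between plain and prefix complexity of the \emph{length}. For any string $u$ of length $n$,
\[ \big(C(u)-C(n)\big)-\big(K(u)-K(n)\big) = \big(K(n)-C(n)\big)-\big(K(u)-C(u)\big), \]
so at a length $n$ whose gap $G_n := K(n)-C(n)$ is large and with $u = A_m\uhr n$ of essentially vanishing own gap $K(u)-C(u)=O(1)$, the plain excess exceeds the prefix excess by almost the full $G_n$. Mirroring the preceding propositions, I would fix a rapidly increasing computable sequence of critical lengths $\ell_m$ on which $G_{\ell_m}$ is near-maximal (of order $\log\log \ell_m$, hence unbounded), and arrange the atoms with nested shared initial segments: all $A_{m'}$ with $m'\ge m$ agree on a string $u_m$ of length $\ell_m$ with $C(u_m)\gep K(\ell_m)$ but $K(u_m)\lep K(\ell_m)$. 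Writing $\beta_m=\sum_{m'\ge m}\alpha_{m'}=\Theta(1/m)$ for the tail mass, the block then contributes about $\beta_m\,G_{\ell_m}$ to $C(\mu\uhr{\ell_m})$ but only $O(\beta_m)$ to $K(\mu\uhr{\ell_m})$. Taking $\ell_m$ doubly-exponentially large, so that $G_{\ell_m}$ is of order $m^2$ and $\beta_m G_{\ell_m}\to\infty$, yields non-$C$-triviality, while the $O(1)$ prefix excesses keep $\sum_m\alpha_m b_m$ finite and give strong $K$-triviality.

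The hard part, and the step I would spend the most care on, is realizing these initial segments by recursive sets. Because $A_m$ is recursive, $A_m\uhr n$ is computable from $n$, whence $C(A_m\uhr n)\le C(n)+O(1)$ with a constant governed only by the program for $A_m$; so a plain excess of order $G_{\ell_m}$ can be produced only by letting the program size of $A_m$ grow with $m$ (to roughly $G_{\ell_m}$ bits), while the prefix complexity of $A_m\uhr{\ell_m}$ must simultaneously sit at the minimum $K(\ell_m)+O(1)$. Concretely one must produce, at suitably chosen lengths $\ell_m$, strings that are $K$-minimal yet have plain complexity close to $K(\ell_m)$ — strings realizing the length's $K$–$C$ gap in the plain sense without inflating the prefix sense — and then extend them to recursive sets that remain $K$-trivial with a small constant at \emph{every} length, so that the nested-block structure does not secretly smuggle in large prefix excess at intermediate lengths. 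I expect this to require a counting/incompressibility argument (in the spirit of Chaitin's bound, \cite[Theorem 2.2.26]{Nies:book}) establishing existence of such strings at the chosen $\ell_m$, followed by a careful verification that the prefix-triviality constants $b_m$ stay summable against the weights $\alpha_m$ while the plain excesses accumulate without bound.
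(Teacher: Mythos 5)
Your proposal correctly isolates the tension that the construction must resolve --- push the weighted plain excess $\sum_m \alpha_m\,(C(A_m\uhr n)-C(n))$ to infinity while keeping $\sum_m \alpha_m b_m$ finite for the prefix-triviality constants $b_m$ --- and the identity relating the two excesses to the gaps $K(n)-C(n)$ and $K(u)-C(u)$ is correct. But the central step is missing, and one quantitative step is impossible as stated. First, if the lengths $\ell_m$ form a computable sequence then $C(\ell_m)\lep \log m$, hence $K(\ell_m)-C(\ell_m)\lep K(C(\ell_m)) \lep 2\log\log m$; so along \emph{any} computable sequence of lengths the gap $G_{\ell_m}$ is $O(\log\log m)$, not $m^2$, and $\beta_m G_{\ell_m}\to 0$ rather than $\infty$. (You may drop computability of the $\ell_m$, since $\mu$ need not be computable, but then you must still prove existence.) Second, everything rests on producing strings $u$ of length $n$ with $K(u)\lep K(n)$ yet $C(u)-C(n)$ unboundedly large, and on extending them to recursive sets whose prefix excess stays small at \emph{every} length; this is exactly the step you defer to ``a counting/incompressibility argument''. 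No such counting argument is in sight: Chaitin's bound only limits the number of prefix-minimal strings of length $n$ to $O(2^c)$, it gives no lower bound on the plain complexity of any of them, and many prefix-minimal strings (e.g.\ $0^{n-j}v$ for short $v$) are simultaneously plain-minimal. Finally, since the atoms cannot be uniformly recursive (uniformity in $m$ would cap the plain excess at $O(\log m)$), the only generic bound on $b_m$ is the prefix complexity of a program for $A_m$, which is at least the plain excess $\approx m^2$ you are trying to create --- destroying summability against $\alpha_m=\Theta(m^{-2})$ unless the pointwise prefix-minimality is proved directly. So the proposal has a genuine gap at its core.

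The paper fills exactly this gap by a different device worth comparing with yours: take a noncomputable r.e.\ set $A$ that is $K$-trivial (Solovay). By definition its prefix excess is bounded by a single constant at every length, while by Chaitin's theorem its plain excess $C(A\uhr n)-C(n)$ is unbounded, precisely because $A$ is not recursive --- these initial segments \emph{are} the strings you were trying to manufacture by counting. Letting $a_k$ be the first length where the plain excess reaches $4^k$, and choosing limit-computable markers $b_k\ge a_k$ that have settled by stage $b_k$, one sets $A_k=A\cap[0,b_k)$: a finite, hence recursive, atom that agrees with $A$ up to $b_k$ and is computable from $k$ and $A\uhr s$ beyond it, so its $K$-triviality constant is $2\log k+O(1)$. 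With weights $2^{-k-1}$ the prefix constants are summable (strong $K$-triviality), while at length $a_k$ the tail of mass $2^{-k}$ sits on $A\uhr{a_k}$ and contributes plain excess $2^{-k}\cdot 4^k=2^k$, so $\mu$ is not $C$-trivial. Without this input (or an equivalent source of prefix-trivial strings of large plain complexity), your sketch does not go through.
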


\begin{proof} Let $A$ be an r.e.\ $K$-trivial set which is not
$C$-trivial, that is, which is not recursive.
Let 
$a_{k,s}$be  the least number $m \leq s$ such that either
$m=s$ or $C_s(A_s \uhr m) \geq C_s(m)+4^k$; as $A$ is not $C$-trivial,
each $a_{k,s}$   converges to $a_k$,   the least value $m$
satisfying $C(A \uhr m) \geq C(m)+4^k$; note that from some stage on,
$A_s$ and $C_s$ have converged up to $a_k$ and from then onwards
$a_{k,s} = a_k$; thus for each $k$ the sequence of the $a_{k,s}$ is
uniformly recrusive and converges to $a_k$.

One  next defines a sequence $b_{k,s}$ such that $b_{k,0} = 0$
and $b_{k,s+1} \in \{b_{k,s},s+1\}$ where the value $s+1$ is taken
if either $a_{k,s} \not\leq b_{k,s}$
or there is $h<k$ with $b_{h,s} = b_{k,s}$. Note that the
definition of these values implies that $a_{k,s} \leq a_{h,s}$
and $b_{k,s} \leq b_{h,s}$ whenever $k \leq h$.

Now let $A_k = \{a \in A: a < b_k\}$. The set $A_k$ has the $K$-triviality
constant $2\log(k)$, as $A_k$ below $b_k$ equals the fixed
$K$-trivial set $A$ and for $s \geq b_k$, if one knows $k$ then
one can compute $b_k = b_{k,s}$ and $A_k \uhr s$ can then be computed
from $k$ and $A \uhr s$, as $s$ is the length of $A \uhr s$.

Having these ingredients, one chooses $\mu = \sum_k 2^{-k-1} \cdot \delta_{A_k}$. This measure is strongly $K$-trivial by Fact~\ref{fa:convergentsum}, since
the sum over $2^{-k-1} \cdot 2 \log(k)$ converges.
However, $\mu$ is not $C$-trivial, as $\mu \uhr {a_k}$ satisfies for $h \geq k$
that $A_h \uhr {a_k} = A \uhr {a_k}$ and
$$
   C(\mu \uhr {a_k}) \geq C(a_k) + 2^{-k} \cdot (C(A \uhr {a_k})-C(a_k))
      \geq C(a_k)+2^k,
$$
and the values $2^k$ are obviously not bounded by a constant.
\end{proof}

\n

\section{Full \ML\ randomness of measures} \label{s:full ML}

\n Let  $\+ M(\cantor)$ be the space of probability measures on Cantor
space (which is canonically a compact topological space). 
Probability measures on this space have been defined by 
   Mauldin and Monticino \cite{Mauldin.Monticino:95}. A special case it the uniform measure $\mathbb P$ on $\+ M(\cantor)$.

To define $\mathbb P$, first let $\+ R $ be the closed set of
representations of probability measures; namely, $\+ R$ consists of
the  functions $X\colon \{0,1\}^* \to [0,1]$ such that $X_\ES =1$ and
$X_\sss = X_{\sss0} + X_{\sss 1}$ for each string $\sss$. $\mathbb P$
is the unique measure on $\+ R$ such that for each string $\sss$ and
$r, s \in [0,1]$, we have 
  $\mathbb P(X_{\sss0} \le r \mid X_\sss = s) = \min (1, r/s)$.   
Intuitively, we choose $X_{\sss 0}$ at random w.r.t. the uniformly
distribution on the interval $[0, X_\sss]$, and the choices made at
different strings are independent.  In the language of~\cite{Mauldin.Monticino:95}, the transition kernel $\tau$ takes the value $\leb $ for each dyadic rational (corresponding to a string $\sss$).

We remark that   for $n>0$ and a string $\sss$ of length $n$,  the function $v_n$ on $[0,1]$  given by $x \to \mathbb P(\{\mu[\sss] \le x\})$ only depends on $n= \sssl$. In fact, the $v_n$ are given by the recursion $v_1 (x) = x$ and $v_{n+1}(x) = x  + \int_x^1 v_n (x/t)dt$. E.g., $v_2(x) = x(1-\ln x)$.

Culver's thesis~\cite{Culver:15} shows that this measure is
computable in the sense of Hoyrup and Rojas~\cite{Hoyrup.Rojas:09}. So the framework provided in~\cite{Hoyrup.Rojas:09} yields a
notion of \ML\ randomness for points in the space $\+ M(\cantor)$. 

\begin{proposition} \label{fffuuukkk}Every probability measure $\mu $
that is \ML\ random wrt $\mathbb P$ is \ML\ absolutely continuous.
\end{proposition}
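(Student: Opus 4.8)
The plan is to prove the contrapositive: I will show that if $\mu$ is \emph{not} \MLac, then $\mu$ fails some $\mathbb{P}$-ML-test, and hence $\mu$ is not \ML\ random with respect to $\mathbb{P}$. So suppose $\mu$ is not \MLac. Using a universal $\leb$-ML-test together with the remark after Definition~\ref{def:main}, I may fix a nested $\leb$-ML-test $\seq{G_m}\sN m$, with $G_m \supseteq G_{m+1}$ and $\leb(G_m) \le \tp{-m}$, whose intersection is exactly the set of non-\ML-random sequences. By Fact~\ref{fa:univ} the hypothesis gives $\mu(\bigcap_m G_m) > 0$, and since $\mu$ is a finite measure, continuity from above yields $\lim_m \mu(G_m) = \mu(\bigcap_m G_m) > 0$. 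I therefore fix a \emph{rational} $\delta > 0$ with $\mu(G_m) > \delta$ for every $m$.

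The quantitative heart of the argument is the observation that $\mathbb{P}$ has barycenter $\leb$. Writing $G_m = \bigcup_i [\sss_i]$ with the $\sss_i$ prefix-free, I have $\nu(G_m) = \sum_i \nu[\sss_i]$. Under $\mathbb{P}$ the coordinate $X_{\sss 0}$ is uniformly distributed on $[0, X_\sss]$, so $E_{\mathbb P}[X_{\sss 0} \mid X_\sss] = X_\sss/2$, and an easy induction on $\sssl$ gives $\int \nu[\sss]\, d\mathbb{P}(\nu) = \tp{-\sssl} = \leb[\sss]$. By Tonelli (all terms are nonnegative) this yields $\int \nu(G_m)\, d\mathbb{P}(\nu) = \leb(G_m) \le \tp{-m}$, and Markov's inequality then gives
\[
  \mathbb{P}(\{\nu : \nu(G_m) > \delta\}) \;\le\; \delta^{-1}\, \tp{-m}.
\]

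To convert this into a genuine $\mathbb{P}$-ML-test I check that, for fixed effectively open $G_m$ and fixed rational $\delta$, the class $\{\nu : \nu(G_m) > \delta\}$ is effectively open in $\+ M(\cantor)$, uniformly in $m$. Indeed $\nu(G_m) > \delta$ holds iff some finite partial union satisfies $\sum_{i < N} \nu[\sss_i] > \delta$, a positive rational combination of the coordinate functions $\nu \mapsto \nu[\sss_i]$ that generate the effective topology on $\+ M(\cantor)$; the set is thus a computable union of basic effectively open classes. Putting $m_k = k + \lceil \log_2(1/\delta)\rceil$ and $H_k = \{\nu : \nu(G_{m_k}) > \delta\}$ produces a uniformly effectively open sequence with $\mathbb{P}(H_k) \le \tp{-k}$, i.e.\ a $\mathbb{P}$-ML-test. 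By the choice of $\delta$ we have $\mu(G_{m_k}) > \delta$, so $\mu \in H_k$ for every $k$; hence $\mu$ fails this test and is not \ML\ random with respect to $\mathbb{P}$, completing the contraposition.

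I expect the main obstacle to be the careful justification of the two effectivity/analytic facts that do the real work: that $\mathbb{P}$ has barycenter $\leb$ (the coordinate computation plus Tonelli), and that $\{\nu : \nu(G_m) > \delta\}$ is genuinely effectively open in the Hoyrup--Rojas sense rather than merely open; once these are in place, the Markov bound and the bookkeeping for the test are routine. One minor but worth-stating point is that although $\delta$ depends on $\mu$, it can be taken rational, so the resulting $\seq{H_k}\sN k$ is a bona fide uniformly effective $\mathbb{P}$-ML-test.
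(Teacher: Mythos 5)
Your proof is correct and follows essentially the same route as the paper's: the barycenter identity $\int \nu(G_m)\,d\mathbb{P}(\nu) = \leb(G_m)$, Markov's inequality, and the uniform effective openness of the superlevel classes $\{\nu : \nu(G_m) > \delta\}$, assembled into a $\mathbb{P}$-ML-test that $\mu$ fails. The only cosmetic difference is that you establish $\int \nu[\sss]\,d\mathbb{P}(\nu) = \tp{-\sssl}$ by conditioning and induction on $\sssl$, whereas the paper derives it from a symmetry ($\mathbb{P}$-preserving transformation) argument; both are fine.
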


\n For the duration of this proof  let $\mu$ range over ${\+ M
(\cantor)}$.  For an open set  $G \sub \cantor $,   let   \[r_G= \int
 \mu(G) d \mathbb P(\mu).\]  
Our  proof of Proposition \ref{fffuuukkk}  is based on    two facts.    

\begin{fact}  $r_G = \leb (G)$. \end{fact}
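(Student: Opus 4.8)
The plan is to reduce the claim to the case of cylinder sets and then to compute the $\mathbb P$-expectation of the coordinate $\mu[\sss]$. First I would observe that every open $G\sub\cantor$ can be written as a disjoint union $G=\bigsqcup_i [\sss_i]$ of cylinders, for instance by taking the minimal strings $\sss_i$ with $[\sss_i]\sub G$. All integrands in sight are nonnegative and bounded by~$1$, so Tonelli's theorem justifies interchanging the countable sum with the integral against $\mathbb P$, giving $r_G=\sum_i r_{[\sss_i]}$; since also $\leb(G)=\sum_i \leb[\sss_i]=\sum_i \tp{-|\sss_i|}$, it suffices to prove $r_{[\sss]}=\tp{-\sssl}$ for every string $\sss$.

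For a cylinder we have $\mu[\sss]=X_\sss$, where $X\in\+ R$ is the representation of $\mu$, so that $r_{[\sss]}=\int X_\sss\, d\mathbb P(X)$ is simply the $\mathbb P$-average of the coordinate $X_\sss$. The key step is to evaluate this average by induction on $\sssl$ using the recursive definition of $\mathbb P$ through its transition kernel. The base case is $X_\ES=1$. For the inductive step, recall that conditionally on $X_\sss=s$ the value $X_{\sss0}$ is uniformly distributed on $[0,s]$, so its conditional mean is $s/2$; since $X_{\sss1}=X_\sss-X_{\sss0}$, the conditional mean of $X_{\sss1}$ is also $s/2$. Integrating these conditional means against the marginal law of $X_\sss$ (the tower property) yields $\int X_{\sss i}\, d\mathbb P=\tfrac12\int X_\sss\, d\mathbb P$ for each $i\in\two$. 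Combined with the inductive hypothesis $\int X_\sss\, d\mathbb P=\tp{-\sssl}$, this gives $\int X_{\sss i}\, d\mathbb P=\tp{-\sssl-1}=\tp{-|\sss i|}$, closing the induction and establishing $r_{[\sss]}=\tp{-\sssl}=\leb[\sss]$.

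I expect the main obstacle to be bookkeeping rather than any genuine difficulty. One must make sure the conditional-mean computation is properly licensed by the exact way $\mathbb P$ was specified via independent uniform splittings, and that the disjointification of $G$ together with the Tonelli interchange is legitimate. Because everything is nonnegative and bounded and $\mathbb P$ is an honest probability measure on $\+ R$, these points are routine; the heart of the argument is the one-line observation that choosing $X_{\sss0}$ uniformly in $[0,X_\sss]$ halves the expected mass at each successive level, so that the average mass assigned to a cylinder of length $n$ is exactly $\tp{-n}$.
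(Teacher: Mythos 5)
Your proof is correct, but the key step is argued differently from the paper. Both arguments begin the same way: decompose the open set $G$ into pairwise disjoint cylinders $[\sss_i]$ and reduce, by countable additivity and interchange of sum and integral, to showing $r_{[\sss]}=\tp{-\sssl}$. For that reduction the paper computes $r_{[\sss]}$ by a symmetry-plus-normalization argument: since the transition kernel of $\mathbb P$ is the same at every node, there is a $\mathbb P$-preserving transformation carrying the coordinate $\mu[\sss]$ to $\mu[\eta]$ for any two strings of the same length, so all $\tp{n}$ values $r_{[\sss]}$ with $\sssl=n$ coincide; as they sum to $\int \sum_{\sssl=n}\mu[\sss]\,d\mathbb P(\mu)=1$, each equals $\tp{-n}$. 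You instead compute $\int X_\sss\,d\mathbb P$ directly by induction on $\sssl$, using the tower property and the fact that $X_{\sss 0}$ is conditionally uniform on $[0,X_\sss]$ (hence has conditional mean $X_\sss/2$, and likewise $X_{\sss 1}=X_\sss-X_{\sss 0}$). Your route is slightly longer but more self-contained: it uses only the defining transition kernel of $\mathbb P$ and does not require exhibiting the measure-preserving bit-permutation symmetry, and it would survive verbatim for any splitting rule whose conditional mean is $X_\sss/2$ even without full exchangeability. The paper's route is shorter and exploits the symmetry already recorded in the remark preceding the proposition. Both correctly handle the disjointification and the interchange of sum and integral (nonnegativity makes Tonelli, or monotone convergence, routine), so there is no gap in your argument.
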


\begin{proof} Clearly,   for each $n$ we have \[\sum_{\sssl =
n}r_{[\sss]} = \int \sum_{\sssl = n} \mu ([\sss]) d\mathbb P(\mu)
=1.\] Furthermore,  $r_\sss = r_\eta$ whenever $\sssl = |\eta|=n$
because by the remark above there is a $\mathbb P$-preserving transformation $T$ of $\+
M(\cantor)$ such that $\mu([\sss]) = T(\mu)([\eta])$. Therefore
$r_{[\sss]} = \tp{-\sssl}$. 

If $\sss, \eta $ are incompatible then $r_{[\sss] \cup [\eta]} =
r_{[\sss]} + r_{[\eta]}$. Now it suffices to write $G = \bigcup_i
[\sss_i]$ where the strings $\sss_i$ are incompatible, so that $\leb G
= \sum_i \tp{-|\sss_i|}$. 
\end{proof}

\begin{fact} Let $\mu \in \+ M(\cantor)$ and let  $\seq{G_m}\sN m$ be
a  \ML-test  such that there is $\delta \in \QQ^+$ with $\fa m \, 
[\mu(G_m) > \delta]$. Then $\mu $ is not \ML-random w.r.t.\ $\mathbb P$. 
\end{fact}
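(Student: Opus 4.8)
The plan is to convert the failing \ML-test $\seq{G_m}\sN m$ on $\cantor$ into a $\mathbb P$-ML-test on the larger space $\+ M(\cantor)$ that captures $\mu$. For each $m$ I would define the set of measures
\[ \+ U_m = \{\nu \in \+ M(\cantor) \colon \nu(G_m) > \delta\}. \]
By the hypothesis that $\mu$ fails the test at level $\delta$, we have $\mu(G_m) > \delta$ for every $m$, so $\mu \in \bigcap_m \+ U_m$ by construction. It then suffices to show that, after a harmless reindexing, $\seq{\+ U_m}\sN m$ is a genuine $\mathbb P$-ML-test in the sense of the computable probability space framework of~\cite{Hoyrup.Rojas:09}, since this forces $\mu$ to be non-random with respect to $\mathbb P$.

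The step I expect to be the main obstacle is verifying that each $\+ U_m$ is effectively open in $\+ M(\cantor)$, uniformly in $m$. Since $G_m = \bigcup_i [\sss_i]$ for a sequence of strings computable uniformly in $m$, the value $\nu(G_m)$ is the supremum of the nondecreasing sequence $\nu\bigl(\bigcup_{i<N}[\sss_i]\bigr)$ as $N \to \infty$, and each term is computable from a representation of $\nu$. Thus $\nu \mapsto \nu(G_m)$ is lower semicomputable uniformly in $m$, whence its strict superlevel set $\+ U_m$ is effectively open in $\+ M(\cantor)$, uniformly in $m$. This is the point that genuinely relies on the computability of $\mathbb P$ from Culver's thesis~\cite{Culver:15}.

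With effective openness in hand, I would bound $\mathbb P(\+ U_m)$ using the previous Fact ($r_G = \leb(G)$ for open $G$) together with Markov's inequality:
\[ \delta \cdot \mathbb P(\+ U_m) \le \int \nu(G_m)\, d\mathbb P(\nu) = r_{G_m} = \leb(G_m) \le \tp{-m}, \]
so $\mathbb P(\+ U_m) \le \tp{-m}/\delta$. Because $\delta \in \QQ^+$ I can compute a natural number $c$ with $\tp{-c} \le \delta$, and then $\mathbb P(\+ U_{k+c}) \le \tp{-k}$ for all $k$. Setting $\+ V_k := \+ U_{k+c}$ therefore yields a sequence of uniformly effectively open sets with $\mathbb P(\+ V_k) \le \tp{-k}$, that is, a $\mathbb P$-ML-test. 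Since $\mu \in \bigcap_m \+ U_m \subseteq \bigcap_k \+ V_k$, the measure $\mu$ fails this test and so is not \ML-random with respect to $\mathbb P$, as required. Apart from the effective-openness verification, every step is a direct application of the Markov bound supplied by the earlier Fact.
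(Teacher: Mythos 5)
Your proposal is correct and follows essentially the same route as the paper: the same superlevel sets $\{\nu : \nu(G_m) > \delta\}$, the same Markov-inequality bound via the preceding fact $r_G = \leb(G)$, and the same index shift by a constant determined by $\delta$. The only difference is that you spell out the lower-semicomputability argument for effective openness, which the paper asserts without detail.
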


\begin{proof} Observe that by the foregoing fact   \[\delta \cdot 
\mathbb P(\{ \mu \colon \mu(G_m) \ge  \delta\})  \le \int \mu(G_m) d
\mathbb P(\mu)  = \leb(G_m) \le \tp{-m}.\] 
Let  $\mathcal G_m = \{ \mu \colon \mu(G_m) >  \delta\}$ which is
uniformly effectively open in the space of measures  $\+ M(\cantor)$. 
Fix $k$ such that $\tp{-k} \le
\delta$. By the inequality above, we have $\mathbb P(\mathcal G_m) \le
\tp{-m}/\delta \le \tp{-m+k}$. Hence   $\seq {\mathcal G_{m+k}} \sN m$
is a \ML-test w.r.t.\
$\mathbb P$ that succeeds on $\mu$. 
\end{proof}

\n  This argument also works for   randomness notions stronger than
\ML's. For instance, if there is a weak-2 test  $\seq{G_m}\sN m$ such
that $\mu G_m > \delta $ for each $m$, then $\mu$ is not weakly
2-random with respect to $\mathbb P$. The converse
of Proposition \ref{fffuuukkk} fails. Culver~\cite{Culver:15} shows that
each measure $\mu$ that is \ML-random w.r.t.\ $\mathbb P$ is non-atomic.
So a measure $\delta_Z$ for a \ML-random bit sequences $Z$ is \MLac\ 
but not \ML-random with respect to~$\mathbb P$.

\section{Being ML-a.c.\ relative to computable ergodic measures} \label{s:SMB}

\n
We review some notions from the field of symbolic dynamics, a
mathematical area closely related to Shannon information  theory. See See e.g.~\cite{Shields:96}  for more detail. Thereafter we
will consider effective ``almost-everywhere theorems'' related to that
area in the framework of randomness for measures.

 In symbolic dynamics it can be  useful to
admit alphabets other than the binary one. Let     $\mathbb A^\infty$
denote the topological space of one-sided infinite sequences of
symbols in an alphabet $\mathbb A$.  Randomness notions etc.\ carry
over from the case of $\mathbb A = \{0, 1\}$. 
A~dynamics on $\mathbb A^\infty$  is given by the shift operator $T$,
which erases the first symbol of   a sequence.
A measure $\rho$ on $\mathbb A^\infty$ is called  \emph{shift 
invariant} if $\rho ( G) = \rho (T^{-1}(G))$ for each open   (and
hence each measurable) set $G$. 
The \emph{empirical entropy}  of a measure  $\rho$ along $Z\in \mathbb
A^\infty$ is given by the sequence of   random variables  \[
h^\rho_n(Z ) = -\frac 1 n \log_{|\mathbb A|} \rho [Z\uhr n].\]   
A shift invariant measure $\rho$ on $\mathbb A^\infty$  is called 
\emph{ergodic} if   every  $\rho$ integrable function $f $ with $f
\circ T = f$  is constant $\rho$-almost surely.
The following  equivalent  condition can be easier to check: for any strings
 $u,v \in \mathbb A^*$,  
\[ \lim_N \frac 1 N \sum_{k=0}^{n-1} \rho ([u] \cap T^{-k}[v]) = \rho
[u] \rho [v]. \]
For ergodic $\rho$, the entropy $H(\rho)$ is defined as $\lim_n
H_n(\rho)$, where \[H_n(\rho) =  -\frac 1 n \sum_{|w| = n}  \rho [w]
\log \rho [w].\]
Thus, $H_n(\rho) = \mathbb E_\rho  h^\rho_n$ is the expected value
with respect to $\rho$.  
by concavity of the logarithm function   the limit 
exists and equals the infimum of the sequence. This limit is denoted
$H(\rho)$, the entropy of $\rho$.

A  well-known result from the 1950s due to Shannon, McMillan and
Breiman  states that  for an  ergodic
measure $\rho $, for $\rho$-a.e.\ $Z$  the empirical  entropy along
$Z$ converges to  the entropy of the measure. See e.g.~\cite[Chapter
1]{Shields:96}, but note that the result is called the Entropy Theorem there. 

\begin{theorem}[SMB   theorem] Let  $\rho $ be
an  ergodic   measure 
on the space $\mathbb A^\infty$. 
For $\rho$-almost every~$Z$   we have  $\lim_n h^\rho_n(Z) = H(\rho)$.
\end{theorem}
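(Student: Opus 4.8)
The plan is to establish the Entropy Theorem by Breiman's martingale method, reducing the nonlinear assertion about $-\tfrac1n\log_{|\mathbb A|}\rho[Z\uhr n]$ to a combination of Birkhoff's ergodic theorem and the martingale convergence theorem. The first step is to telescope the information content into conditional contributions. Writing $\phi_k(Z)=-\log_{|\mathbb A|}\rho\bigl(Z(k)\mid Z(k-1),\dots,Z(0)\bigr)$ for the conditional information of the $k$-th symbol given its predecessors, one has the exact identity $-\log_{|\mathbb A|}\rho[Z\uhr n]=\sum_{k=0}^{n-1}\phi_k(Z)$, so that $h^\rho_n(Z)=\tfrac1n\sum_{k=0}^{n-1}\phi_k(Z)$. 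The obstruction to applying the ergodic theorem directly is that the $\phi_k$ condition on blocks of growing length and are not of the form $g\circ T^k$. To fix this I would pass to the natural two-sided extension of $(\mathbb A^\infty,T,\rho)$, which is again ergodic, and set $\psi_k(Z)=-\log_{|\mathbb A|}\rho\bigl(Z(0)\mid Z(-1),\dots,Z(-k)\bigr)$ together with $\psi_\infty(Z)=-\log_{|\mathbb A|}\rho\bigl(Z(0)\mid Z(-1),Z(-2),\dots\bigr)$. One checks that $\phi_k(Z)=\psi_k(T^kZ)$, whence $h^\rho_n(Z)=\tfrac1n\sum_{k=0}^{n-1}\psi_k(T^kZ)$.

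Next I would invoke Doob's martingale convergence theorem for the increasing family of conditioning $\sigma$-algebras $\mathcal F_{-k}=\sigma\bigl(Z(-1),\dots,Z(-k)\bigr)$: the $\psi_k$ converge to $\psi_\infty$ both $\rho$-a.e.\ and in $L^1(\rho)$. Splitting the Ces\`aro average gives
$$h^\rho_n(Z)=\frac1n\sum_{k=0}^{n-1}\psi_\infty(T^kZ)+\frac1n\sum_{k=0}^{n-1}\bigl(\psi_k-\psi_\infty\bigr)(T^kZ).$$
By Birkhoff's ergodic theorem and the ergodicity of $\rho$, the first term converges $\rho$-a.e.\ to the constant $\mathbb E[\psi_\infty]$. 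To identify this constant with $H(\rho)$, take $\rho$-expectations in the telescoping identity: $H_n(\rho)=\tfrac1n\sum_{k<n}\mathbb E[\psi_k]$ is the Ces\`aro mean of the conditional entropies $\mathbb E[\psi_k]$, which decrease to $\mathbb E[\psi_\infty]$; hence $H(\rho)=\lim_n H_n(\rho)=\mathbb E[\psi_\infty]$, as needed.

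The main obstacle is showing that the error term $\tfrac1n\sum_{k<n}(\psi_k-\psi_\infty)(T^kZ)$ vanishes $\rho$-a.e.; the shifted evaluation at $T^kZ$ blocks a direct appeal to the convergence $\psi_k\to\psi_\infty$. The standard remedy is a maximal-function argument: for fixed $N$ put $\Psi_N=\sup_{k\ge N}|\psi_k-\psi_\infty|$, so that for $n>N$ the absolute error is at most $\tfrac1n\sum_{k<N}|\psi_k-\psi_\infty|(T^kZ)+\tfrac1n\sum_{k=0}^{n-1}\Psi_N(T^kZ)$. The first summand is a fixed finite sum divided by $n$ and so tends to $0$, while the second converges $\rho$-a.e.\ to $\mathbb E[\Psi_N]$ by Birkhoff; letting $N\to\infty$ and using $\Psi_N\downarrow 0$ a.e.\ with dominated convergence then forces the error to $0$. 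The technical heart, which makes every step above legitimate, is the integrability lemma $g^{*}:=\sup_k\psi_k\in L^1(\rho)$, since $\Psi_N\le 2g^{*}$ and $\psi_\infty\le g^{*}$. This I would obtain from a Chung–Barron-type maximal inequality: fixing the realized symbol and the first time the conditional probability of the truth drops below $|\mathbb A|^{-\lambda}$, an optional-stopping estimate yields the distributional bound $\rho(\{g^{*}>\lambda\})\le |\mathbb A|\cdot|\mathbb A|^{-\lambda}$, whose integral $\int_0^\infty\rho(\{g^{*}>\lambda\})\,d\lambda$ is finite.
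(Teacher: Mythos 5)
Your proof is correct; it is the classical Breiman--Chung martingale proof of the Entropy Theorem. Note, however, that the paper does not prove this statement at all: it is quoted as a known classical result with a pointer to Shields' monograph, where the proof given is the Ornstein--Weiss argument via the packing/covering lemma, a purely combinatorial counting scheme that avoids martingales entirely. So your route is genuinely different from the cited source. Your decomposition $h^\rho_n(Z)=\tfrac1n\sum_{k<n}\psi_k(T^kZ)$ in the natural extension, the identification $H(\rho)=\mathbb E[\psi_\infty]$ via the decreasing conditional entropies, the maximal-function treatment of the error term, and the Chung--Barron bound $\rho(\{g^*>\lambda\})\le|\mathbb A|\cdot|\mathbb A|^{-\lambda}$ giving $g^*\in L^1$ are all sound; the only steps you gloss over are routine (that $\psi_\infty<\infty$ $\rho$-a.e., which follows from the defining property of conditional expectation on $\{Z(0)=a\}$, and that the a.e.\ statement transfers back from the two-sided extension to the one-sided factor). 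The martingale approach buys a conceptually transparent reduction to Birkhoff plus Doob and generalizes to conditional and relative entropy settings, at the cost of the integrability lemma for $g^*$; the Ornstein--Weiss packing proof is longer to set up but is self-contained within ergodic theory and is the one that adapts to the effective versions (Hochman, Hoyrup) that the paper actually uses in Section 6.
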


\n
Recall from 
Fact \ref{fa:univ}  that a measure $\mu$ is \emph{\MLac\
with respect to $\rho$} iff $\mu(\+ C) = 0$ where $\+ C$ is the class
of sequences in $\mathbb A^\infty$ that are not \ML-random with respect
to $\rho$. 

If a computable measure $\rho$ is shift invariant, then   $\lim_n
h^\rho_n(Z)$  exists for each $\rho$-\ML-random~$Z$ by  a result of
Hochman \cite{Hochman:09}. Hoyrup \cite[Theorem 1.2]{Hoyrup:12}
gave   an alternative proof for ergodic $\rho$, and also showed that
in that case  we have $\lim_n h^\rho_n(Z) = H(\rho)$ for each
$\rho$-ML-random $Z$.    We extend this result to measures $\mu$  that
are \MLac\ with respect to $\rho$, under the additional  hypothesis that
 the $h_n^\rho$  are uniformly bounded. This holds e.g.\ for Bernoulli
measures and the measures given by a Markov process. 

\begin{proposition} \label{prop: bound}
Let  $\rho $ be a computable  ergodic        measure
on the space $\mathbb  A^\infty$ such that for some constant $D$,  
each $h_n^\rho$ is   bounded above by $D$. Suppose the measure $\mu$
is \MLac\  with respect to $\rho$.  Write $s = H(\rho)$. 
We have $\lim_n E_\mu |h^\rho_n - s| =0$. 
\end{proposition}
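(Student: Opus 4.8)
The plan is to reduce the statement to the Dominated Convergence Theorem: Hoyrup's effective strengthening of the Shannon--McMillan--Breiman theorem will supply pointwise convergence $\mu$-almost everywhere, while the boundedness hypothesis on the $h_n^\rho$ will supply a dominating function.

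First I would invoke the two facts recalled just before the proposition. By Hoyrup's theorem for ergodic $\rho$, the convergence $\lim_n h_n^\rho(Z) = H(\rho) = s$ holds not merely for $\rho$-almost every $Z$, but for \emph{every} $\rho$-ML-random $Z$. This ``at every random point'' form is the essential ingredient. Now, since $\mu \ll_{ML} \rho$, Fact~\ref{fa:univ} tells us that the class $\+ C$ of sequences that are not $\rho$-ML-random is a $\mu$-null set. Combining the two, the set on which $h_n^\rho(Z) \to s$ contains all $\rho$-ML-random sequences and hence has full $\mu$-measure; that is, $h_n^\rho \to s$ holds $\mu$-almost surely.

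Next I would extract the dominating function from the hypothesis. Because $\rho[Z \uhr n] \le 1$ we always have $h_n^\rho \ge 0$, and by hypothesis $h_n^\rho \le D$ (note that this upper bound forces $\rho$ to have full support, so the $h_n^\rho$ are genuinely finite everywhere). Taking $\rho$-expectations gives $0 \le H_n(\rho) \le D$, whence $0 \le s = H(\rho) \le D$ as well. Therefore $|h_n^\rho - s| \le D$ pointwise for every $n$, and the constant $D$ is $\mu$-integrable since $\mu$ is a probability measure.

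Finally, with $g_n = |h_n^\rho - s| \to 0$ holding $\mu$-a.e.\ and $0 \le g_n \le D$, the Dominated Convergence Theorem yields $E_\mu |h_n^\rho - s| = \int g_n \, d\mu \to 0$, as required. The main obstacle is the first step: one cannot transfer the $\rho$-almost-everywhere convergence of the classical theorem directly to $\mu$, since ML-absolute continuity is strictly weaker than ordinary absolute continuity and $\mu$ may well be singular with respect to $\rho$ (for instance a Dirac measure at a single $\rho$-random point). It is precisely the effectivity of the theorem --- convergence at \emph{all} ML-random points --- that bridges this gap. The boundedness hypothesis cannot be dropped, as the domination step would otherwise fail; the subsequent example in this section confirms its necessity.
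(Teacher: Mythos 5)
Your proposal is correct and follows essentially the same route as the paper's own proof: Hoyrup's effective SMB theorem gives convergence at every $\rho$-ML-random point, ML absolute continuity transfers this to $\mu$-a.e.\ convergence, and the Dominated Convergence Theorem (using the bound $D$) finishes the argument. The only addition is your explicit check that $s \le D$, which the paper leaves implicit.
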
 

\begin{proof}
By   Hoyrup's result, $\lim_n h^\rho_n(Z) = s$ for each
$\rho$-ML-random $Z$. Since the sequences that are not \ML-random
w.r.t.\ $\rho$ form a null set w.r.t.\ $\mu$,  we infer   that 
$\lim_n | h^\rho_n(Z) - s| =0$ for  $ \mu$-a.e.\ $Z$. Since the 
exception set  is  measurable and the $h_n\rho$ are bounded,
the Dominated Convergence Theorem now shows that  $\lim_n \mathbb
E_\mu |h_n^\rho - s| =0$, as required.
\end{proof}

\n
We now given an example showing that the boundedness hypothesis on the
$h_n^\rho$ is necessary. We provide a computable ergodic measure
$\rho$ such that some finite measure $\mu \ll \rho$ makes the sequence
$E_\mu h_n^\rho$ converge to $\infty$. 
\begin{proposition} There is an ergodic computable measure $\rho$
(associated to a binary renewal process) and a computable measure $\mu
\ll \rho$ such that 
$\lim_n \mathbb E_\mu h^\rho_n = \infty$. (We can then normalise $\mu$
to become a probability measure, while maintaining the same conclusion.)
\end{proposition}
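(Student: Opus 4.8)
The plan is to exploit the failure of uniform integrability rather than of pointwise convergence. Since $\mu\ll\rho$, the sequences that are not $\rho$-ML-random form a $\mu$-null set, so by Hoyrup's result \cite{Hoyrup:12} one already has $h^\rho_n\to H(\rho)$ both $\rho$- and $\mu$-almost everywhere, with $H(\rho)<\infty$ (finite mean gap forces finite gap-entropy, hence finite entropy rate). Thus the a.e.\ behaviour is fixed and harmless; all the content lies in forcing the \emph{expectations} to diverge, by placing a small but ever-growing amount of $\mu$-mass on cylinders of astronomically small $\rho$-measure. This is a ``travelling bump'': a family of sets whose per-symbol information content grows with $n$ while their mass shrinks, the product still tending to infinity. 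The unbounded Radon--Nikodym derivative $d\mu/d\rho$ this produces is exactly the obstruction that the boundedness hypothesis of Proposition~\ref{prop: bound} rules out.

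Concretely, I would take $\rho$ to be the stationary binary renewal process whose inter-arrival (gap) law is $p_k=2^{-k^3}/Z$, where $Z=\sum_{k\ge1}2^{-k^3}$ and a $1$ marks each renewal. The mean gap $\bar\mu=\sum_k k\,p_k$ is finite, so the stationary process exists; its gap support has gcd~$1$, so it is mixing and hence ergodic, and its cylinder probabilities are computable from the $p_k$ via the renewal equations, so $\rho$ is a computable ergodic measure. For $n\ge2$ put $w_n=1\,0^{\,n-2}\,1$, a string of length $n$. By the renewal structure the gap after the renewal at position $0$ is independent with law $p$, so $\rho[w_n]=P_\rho(\text{renewal at }0)\cdot p_{n-1}\le p_{n-1}=2^{-(n-1)^3}/Z$. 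Hence on the cylinder $[w_n]$ every $Z$ has $Z\uhr n=w_n$, and writing $M_n:=-\tfrac1n\log_2\rho[w_n]$ we get
\[
h^\rho_n(Z)=M_n\ \ge\ \tfrac1n\bigl((n-1)^3+\log_2 Z\bigr)\ \ge\ \tfrac12 n^2
\]
for all large $n$. The cylinders $[w_n]$ are pairwise disjoint.

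Next I would build $\mu$ using the localisation $\rho_{w_n}$ of Definition~\ref{def:local}. Set $c_n=n^{-3/2}$ and $S=\sum_{n\ge2}c_n<\infty$, and define
\[
\mu=\frac1S\sum_{n\ge2}c_n\,\rho_{w_n}.
\]
This is a probability measure with density $f=\tfrac1S\sum_n (c_n/\rho[w_n])\,\mathbf 1_{[w_n]}$ satisfying $\int f\,d\rho=\tfrac1S\sum_n c_n=1$, so $\mu\ll\rho$ (and $\mu$ is easily checked to be computable); note $f$ is unbounded. Disjointness gives $\mu[w_n]=c_n/S$. Since $h^\rho_n\ge0$ everywhere and is constant on $[w_n]$, I would estimate, for each large $n$,
\[
\mathbb E_\mu h^\rho_n\ \ge\ M_n\,\mu[w_n]\ \ge\ \tfrac12 n^2\cdot\frac{c_n}{S}\ =\ \frac{n^{1/2}}{2S}\ \longrightarrow\ \infty,
\]
which yields $\lim_n\mathbb E_\mu h^\rho_n=\infty$ directly on the normalised measure.

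The main obstacle is obtaining a genuine limit rather than merely a $\limsup$ along the lengths of the informative cylinders. This is what forces the tension ``$\sum_n c_n<\infty$ yet $M_n c_n\to\infty$'': a summable weight sequence compels $c_n=o(1/n)$, so the linear growth $M_n\sim n$ one gets from geometric or polynomially decaying gaps is insufficient, and one is driven to super-quadratic gap decay (here $k^3$, giving $M_n\sim n^2\gg 1/c_n\sim n^{3/2}$). The remaining points are routine: verifying ergodicity, mixing and computability of the renewal measure~$\rho$, and the elementary bound $\rho[w_n]\le p_{n-1}$ coming from the independence of the post-renewal gap from the event of a renewal at $0$.
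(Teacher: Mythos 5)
Your construction is correct and is essentially the paper's own proof: the paper also uses a stationary binary renewal process with gap law $p_k = 2^{-k^4}/c$, puts a density on the disjoint cylinders $v_k = 10^k1$ with $\mu[v_k] \sim k^{-2}$ (your $c_n/S$ with $c_n=n^{-3/2}$ plays the same role), and gets $\mathbb E_\mu h_n^\rho \gtrsim k^2/n \to \infty$ at $k=n-2$ by exactly your lower-bound-on-one-cylinder argument. The only cosmetic differences are the exponents ($k^4$ vs.\ $k^3$, $k^{-2}$ vs.\ $n^{-3/2}$) and that you normalise $\mu$ to a probability measure up front rather than as an afterthought.
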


\begin{proof} Let $k$ range over positive natural numbers.
The real  $c = \sum_k \tp{-k^4}$ is computable. Let $p_k = 
\tp{-k^4}/c$ so that $\sum p_k=1$.   Let $b= \sum_k k \cdot p_k$ which
is also computable.

Let $\rho $ be the measure associated with the corresponding binary
renewal process, which is  given by the conditions \begin{center}
$\rho[Z_0 =1]= 1/b$ and $\rho(10^k 1 \prec Z  \mid Z_0 =1) = p_k$.  \end{center}
Informally, the process has initial value  $1$ with probability $1/b$,
and after each $1$,  with probability $p_k$ it takes $k$ many 0s until
it reaches the next 1. See again e.g.~\cite[Chapter 1]{Shields:96} where
it is shown that $\rho$ is ergodic.
Write $v_k = 1 0^k 1$. Note that $\rho[v_k] = p_k/b$. 

Define a  function $f$ in $L_1(\rho)$ by $f(v_k \ape Z ) =  
k^{-2}/p_k$ and $f(X)=0$ for any $X$ not extending any $v_k$.
  It is clear that $f$ is $L_1(\rho)$-computable, in the usual sense
that there is an effective sequence of basic functions $\seq{f_n}$
converging effectively to $f$: let $f_n(X)= f(X)$ in case $v_k \prec
X$, $k \le n$, and $f_n(X)= 0 $ otherwise. Define   the    measure
$\mu$   by  $d\mu = f d\rho$, i.e. $\mu(A) = \int_A f d\rho$.  Thus
$\mu[v_k]= k^{-2}/b$. Since $\rho$ is computable and $f$ is
$L_1(\rho)$-computable, $\mu$ is computable. Also note that
$\mu(\cantor)= \int f d\rho$ is finite.

For any $n>2$, letting $k= n-2$, we have
\[E_\mu h_n^\rho \ge -\frac 1 n \mu[v_k] \log \rho[v_k]= - \frac 1 {n
 k^{2} b}   (k^4-bc) \ge \frac {k^2} {nb} -O(1).\]
This completes the proof.
\end{proof}

\noindent
The next observation shows that the asymptotic initial segment
complexity of a \MLac\ measure relative to $\rho$ obeys  some lower
bound. Note that $H(\leb) = 1$. So for $\rho = \leb$, this shows that
in Example~\ref{ex:slow growth} we cannot subtract, say,  $ n/4$
instead~of~$n^\theta$.  

\begin{proposition} \label{prop: dim =1}
Let $\rho$ be a computable ergodic   measure,
and suppose  $\mu$ is a \MLac\
measure with respect to $\rho$.
Then $$\lim_n \frac 1 n K(\mu\uhr n) = \lim_n \frac 1 n C(\mu\uhr n) =
H(\rho).$$ 
\end{proposition}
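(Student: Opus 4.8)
The plan is to recognize the normalized initial segment complexity as an integral and then transport the almost-everywhere convergence supplied by the effective Brudno theorem through the integral by dominated convergence. First I would note that, by Definition~\ref{def:ISC}, the map $Z\mapsto C(Z\uhr n)$ is the simple (hence measurable) function taking the value $C(x)$ on each cylinder $[x]$ with $|x|=n$, so that
\[
\frac 1 n C(\mu\uhr n)=\sum_{|x|=n}\frac{C(x)}{n}\,\mu[x]=\int\frac{C(Z\uhr n)}{n}\,d\mu(Z).
\]
Since $\mu$ is \MLac\ with respect to $\rho$, Fact~\ref{fa:univ} tells us that the non-$\rho$-ML-random sequences form a $\mu$-null set, so $\mu$-almost every $Z$ is $\rho$-ML-random. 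For each such $Z$ the effective Brudno theorem of Hochman \cite{Hochman:09} and Hoyrup \cite{Hoyrup:12} gives $\lim_n C(Z\uhr n)/n=H(\rho)$; hence the integrand converges $\mu$-almost surely to the constant $H(\rho)$.

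Next I would justify exchanging the limit and the integral. There is a constant $c$ with $C(x)\le n+c$ for all $x$ of length $n$, so $C(Z\uhr n)/n\le 1+c$ for every $n\ge 1$, uniformly in $Z$. Thus the integrands are dominated by a constant on the probability space $(\mathbb A^\infty,\mu)$, and the Dominated Convergence Theorem yields
\[
\lim_n\frac 1 n C(\mu\uhr n)=\lim_n\int\frac{C(Z\uhr n)}{n}\,d\mu(Z)=\int H(\rho)\,d\mu(Z)=H(\rho),
\]
the last equality because $\mu$ is a probability measure.

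Finally I would transfer this to $K$ by comparison rather than by rerunning the argument. From the standard relation $C(x)\lep K(x)\le C(x)+2\log n+O(1)$ for strings of length $n$, averaging against $\mu$ gives $\bigl|K(\mu\uhr n)-C(\mu\uhr n)\bigr|\le 2\log n+O(1)$, so the normalized difference tends to $0$. Hence $\lim_n\frac 1 n K(\mu\uhr n)$ exists and equals $\lim_n\frac 1 n C(\mu\uhr n)=H(\rho)$. The essential input is the effective Brudno theorem, which supplies the pointwise limit at $\rho$-random points; everything else is the elementary observation that the $\mu$-average is literally an integral, together with the crude uniform bound needed for dominated convergence. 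The only point demanding care is that the normalization of $H(\rho)$ in the cited Brudno result matches the one implicit in measuring $C$ in bits, and that the transfer of the null set of non-$\rho$-random sequences to a $\mu$-null set is precisely what being \MLac\ provides.
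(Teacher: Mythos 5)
Your proof is correct and follows essentially the same route as the paper's: both express the normalized average as an integral of $C(Z\uhr n)/n$ (the paper uses $K(Z\uhr n)/n$), invoke Hoyrup's effective Brudno theorem at the $\rho$-ML-random points, which are $\mu$-co-null by the ML-a.c.\ hypothesis, and conclude by dominated convergence using the uniform bound on the normalized complexity; the transfer between $C$ and $K$ via the standard $O(\log n)$ comparison is also the paper's argument. The only difference is the cosmetic choice of which complexity to treat first.
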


\begin{proof}
We can use  $K$ and  $C$ interchangeably
because  $C(x) \lep K(x) \lep C(x)  + K(C(x))$
\cite[Proposition 2.4.1]{Nies:book}. We settle on $K$.

Let $k_n(Z) = K(Z\uhr n)  /n$. 
The argument   is very similar to the
one in Proposition~\ref{prop: bound} above, replacing the functions
$h_n$ by the $k_n$. Note that $k_n $ is bounded above
by a constant because $K(x) \lep |x| +  2 \log |x|$. 
Hoyrup's result \cite[Theorem 1.2]{Hoyrup:12} states that $\lim_n k_n(Z)
= H(\rho)$ for each $\rho$-ML-random $Z$.  Now we can apply
the Dominated Convergence Theorem as in the proof of 
Proposition~\ref{prop: bound}.
\end{proof}

\section{Conclusions and open questions}

\noindent
In the present paper, we studied
algorithmic randomness, and triviality properties for probability measures on
Cantor space.

A main property we studied was  that  a  measure $\mu$ is \MLac:   for every   \ML\ test  the $\mu$-measure of
its components converges to $0$.
We provided several examples and showed a robustness property related
to Solovay tests.
In Section~\ref{s:sS} we introduced
  strong Solovay tests for measures.  We
leave the following question open.

\begin{question}
Does every measure that passes every \emph{strong} Solovay test,   
pass every Solovay test?
\end{question}

\n
Thereafter, we studied growth behaviour of the $\mu$-averages of the descriptive string  complexities
$C(x)$ and $K(x)$ over strings $x$ of length $n$.
We asked what can be said for measures where the
growth is as slow  as possible and for those where it is as fast 
as possible. We looked at $C$- and at $K$-trivial measures, where the averages
are bounded up to a constant by $C(n)$ or $K(n)$, respectively.
Such measures are atomic, that is, they are   weighted, possibly infinite
sums of Dirac measures. The atoms of such  measure satisfy the corresponding
triviality property; furthermore, the weighted sum of the atoms has to
converge fast, though we could not find a precise criterion for this.
In this context, we mention two open questions.

\begin{question} \label{qu:cktrivial}
Is every $C$-trivial measure also $K$-trivial?
\end{question}

\n
An affirmative answer to the next question  would lead to an affirmative answer to
Question~\ref{qu:cktrivial};
  Proposition~\ref{pr:equivalence} provided more detail.

\begin{question} \label{qu:bound CK}
Is there a constant bound on the function $n \mapsto K(C(n)|n,K(n))$?
\end{question}

\n Measures $\mu$ on the upper end of the growth 
spectrum are \MLac.  However, we showed that also here there is no
exact correspondence between the growth-rate of the $\mu$ averages of the string complexity and the
property of being \MLac; instead,  in the border area, there are 
measures that are smaller with respect to the growth-rate of the averages which   are
\MLac\, and  measures that are larger but which are not. In particular, an analogue of
the Levin-Schnorr Theorem does not hold for measures. We noted that near the 
upper end of the growth spectrum for $C$ are the  Kolmogorov random measures $\mu$:  there is a constant $d$ with $C(\mu \uhr n) \geq n-d$
for infinitely many~$n$. Besides the usual uniform measure, there are
  other examples, such as  Dirac measure of sets which are 2-random.
While $C$-trivial and $K$-trivial measures are closed under finite
convex combinations, this is not known for the measures on the upper end of the growth spectrum.

\begin{question}
Assume that measure  $\mu,\nu$  are Kolmogorov random.
Is  every convex
combination of $\mu$ and $\nu$ Kolmogorov random as well?
\end{question}

\n
As mentioned in  Section~\ref{s:insegs growth},  a sequence $X$ is
2-random iff it is     Kolmogorov
randomness as defined in \cite{Li.Vitanyi:book}).
 So an affirmative answer to the question above implies an affirmative answer to the following:
\begin{question}
If $X,Y\in \cantor $ are both 2-random, does there exist a constant
$d$ such that infinitely many $n$ satisfy both
$C(X \uhr n) \geq n-d$ and $C(Y \uhr n) \geq n-d$?
\end{question}

\n
Besides the trivial case when $X=Y$, this also holds if $X \oplus Y$ is
2-random. This can be seen using that 2-randomness  coincides with 
Kolmogorov randomness. Clearly for strings $x,y$ of length $n$ one has $C(x \oplus y) \lep C(x) + n$: on input $\tau$ a plain machine $M$ searches for a decomposition $\tau = \sss \ape y$ such that the universal plain machine on input $\sss$ outputs a string $x$ of the same length as $y$; then $M$ outputs $x \oplus y$. So  if   $C(X \oplus Y \uhr {2n}) \gep 2n$ for infinitely
many $n$, then $C(X\uhr n)  \gep n$ for the same witnesses~$n$. By symmetry, also $C(Y\uhr n)  \gep n$ for the same witnesses $n$.

 The following diagram summarizes the implications obtained between (weak) randomness notions for measures. Non-labelled implication arrows are trivial. We ignore at present whether the implications given by the arrows labelled  \ref{fact:CK imply} and \ref{fact: compare CK} are proper. All the other implication arrows are proper. In most cases this can be seen already for  Dirac measures. For instance, full Martin-L\"of randomness of a measure implies that the measure has no atoms, so the converse of the implication labeled Prop.\ \ref{fffuuukkk} fails.  We can rule out two  further implications.  Each fully ML-random $\mu$ is orthogonal to $\leb$ by Bienvenu and Culver~\cite[Section~2.6]{Culver:15}, and hence not absolutely continuous. We ignore whether full ML-randomness implies  one of the  condition stronger than ML-a.c.\  displayed in the lower part of the diagram.

\[\xymatrix{\mu \text{ is abs.\ continuous} \ar[rrd]  & & \\ 
\mu \text{ is fully ML random  } \ar [rr]^{\text{ Prop.\ \ref{fffuuukkk}}} & & \mu \text{ is ML a.c.} \\
\ex^\infty n \, K(\mu\uhr n ) \gep n+K(n) \ar[r]_{\text{   \ref{fact:CK imply}}} & \ex^\infty n \, C( \mu\uhr n)  \gep n \ar[ru]_{\text{ Thm.\ \ref{thm:Inseg MLR}}}  & \\
\fa n \, K(\mu\uhr n ) \gep n+K(n)\ar[u] \ar[r]_{\text{   \ref{fact: compare CK}}} & \fa n \,  C( \mu\uhr n)  \gep n \ar[u]   &  
}\]

In Section~\ref{s:SMB} we considered weak randomness relative to a general
ergodic computable measure. We proved  appropriate effective
versions of the Shannon-McMillan-Breiman theorem and the Brudno
theorem where the bit sequences are replaced by measures; the former needed an additional boundedness hypothesis.

One possible  effective version of the Birkhoff ergodic theorem states that  for an ergodic
computable $\rho$, if $f \cantor \to \R$ is $\rho$-integrable and
lower semicomputable and $Z$ is $\rho$-\ML-random, then  the limit of
the usual ergodic averages $A_nf(Z) = \frac 1 n \sum_{k< n} (f \circ
T^k)(Z)$ equals $\int f d\rho$. For background see e.g.\
\cite{Miyabe.Nies.Zhang:16} which contains references to original
work.  If in addition the $A_nf$ are bounded then an argument similar to the one
in the proof of Proposition~\ref{prop: bound} shows that
$\lim_n\int (A_nf) d\mu = \int fd\rho$ for any measure 
$\mu\ll_{ML} \rho$. However, it is unknown whether
  this additional hypothesis is necessary.

\smallskip
\n
{\bf Acknowledgements.}
The authors would like to thank the anonymous referees of STACS 2020
for useful suggestions. Furthermore, they would like to thank Benjamin Weiss and Bruno
Bauwens for helpful correspondence.   

 Some problems from this  paper were presented  at  the 2020 AIM workshop on randomness and applications, for instance Question~\ref{qu:bound CK}. The authors thank the participants, in particular J.\ Miller, S.\ Shen and E.\ Mayordomo,  for their interest and useful comments. 

\def\cprime{$'$} \def\cprime{$'$}

\end{document}